\numberwithin{equation}{section}
\newtheorem{theorem}{Theorem}
\newtheorem{condition}[theorem]{Condition}
\newtheorem{corollary}[theorem]{Corollary}
\newtheorem{definition}[theorem]{Definition}
\newtheorem{example}[theorem]{Example}
\newtheorem{lemma}[theorem]{Lemma}
\newtheorem{notation}[theorem]{Notation}
\newtheorem{proposition}[theorem]{Proposition}
\newtheorem{remark}[theorem]{Remark}
\newenvironment{proof}[1][Proof]{\noindent\textbf{#1.} }{\ \rule{0.5em}{0.5em}}
\newcommand*{\Lcdot}
{\raisebox{-0.25ex}{\scalebox{2.0}{$\cdot$}}}
\newcommand*{\Bcdot}
{\raisebox{-0.5ex}{\scalebox{2.0}{$\cdot$}}}
\newcommand*{\Sbullet}
{\scalebox{0.6}{$\bullet$}}
\begin{document}

\title{Integration of time-varying cocyclic one-forms \\
against rough paths}
\author{Terry J. Lyons, \ Danyu Yang \thanks{
The authors would like to acknowledge the support of the Oxford-Man
Institute and the support provided by ERC advanced grant ESig (agreement no.
291244).}}
\maketitle

\begin{abstract}
We embed the rough integration in a larger geometrical/algebraic framework
of integrating one-forms against group-valued paths, and reduce the rough
integral to an inhomogeneous analogue of the classical Young integral. We
define dominated paths as integrals of one-forms, and demonstrate that they
are stable under basic operations.
\end{abstract}

\tableofcontents

\newpage

\section{Introduction}

\subsection{Overview}

In the early days of rough paths theory, and in the earlier work of Young 
\cite{young1936inequality}, it was understood that there is a natural
interplay between Lipschitz functions (or one-forms) and rough paths, with
the integration being the intermediary. There are a number of
characterizations for the Banach space of Lipschitz functions of degree $%
\gamma >0$, and in this article we particularly take the view of Stein \cite%
{stein1970singular}, that if $\mathcal{K}$ is a subset of an affine space $%
\mathcal{S}$ then a Lipschitz function on $\mathcal{K}$ of degree $\gamma $
is a continuous map $f:x\mapsto p\left( x\right) \left( \cdot \right) $
taking values in polynomial functions\footnote{%
A polynomial function of degree (at most) $n$ is a globally defined function
whose $(n+1)^{\text{th}}$ derivative exists and is identically zero.} on $%
\mathcal{S}$ of degree strictly less than $\gamma $. The Lipschitz degree of 
$f$ describes the varying speed of the polynomials: the higher the Lipschitz
degree the slower the changing speed of the polynomials. The idea is that $f$
prescribes a consistent family of global functions which are tangent to the
trace $\left\{ p(x)(x)|x\in \mathcal{K}\right\} $. It is the polynomials,
the norms on them, and interactions between them that are critical, and in
general the mapping $x\mapsto p(x)(x)$ does not carry nearly enough
information about $f$ unless $\mathcal{K}$ is open.

A key point about Lipschitz functions we use here is somehow
counter-intuitive. If $\mathcal{K}$ is bounded, open, and connected, and $p$
is a polynomial function, then one should think of the function $f:x\mapsto
p(\cdot )$ as a \emph{constant} Lipschitz function. Polynomial functions are
in this sense constant functions in the space of Lipschitz functions. The
view of polynomials as basic ingredients in the larger space of Lipschitz
functions, or more generally the view of closed (cocyclic) one-forms as the
basic ingredients in the space of time-varying one-forms, is at the heart of
the framework introduced in this paper.

In particular, we reinterpret the rough line integral as the integral of a
slowly-varying polynomial one-form against a rough path, where there is
neither a given point nor a power series associated with a polynomial, and
the customary view as a power series around a point on the path somewhat
clouds this understanding because of the erratic movement of the point as
the path evolves.

The original integration in the theory of rough paths defines an integral 
\begin{equation*}
\int_{u\in \left[ s,t\right] }\alpha \left( X_{u}\right) dX_{u}
\end{equation*}%
for a $\limfunc{Lip}(\gamma )$ one-form $\alpha $ against a $p$-rough path $%
X $ for $p<\gamma +1$. A crucial restrictive assumption was that the
one-form $\alpha $ depended on $X_{u}$ but not on $u$ directly. The
assumption, standard in the It\^{o} theory, that $\alpha \in L^{2}\left(
du\right) $ is far too permissive for a deterministic theory. On the other
hand, it was shown in \cite{lyons1998differential} that if $X$ is a $p$%
-rough path and $h$ is a continuous path of finite $q$-variation then $%
\left( X,h\right) $ is canonically a rough path providing $p^{-1}+q^{-1}>1$
(see also \cite{lejay2006p, friz2010multidimensional, hairer2015geometric}).
Letting $h\left( t\right) =t$ allows one to consider%
\begin{equation*}
\int_{u\in \left[ s,t\right] }\alpha \left( X_{u},u\right) dX_{u}
\end{equation*}%
with appropriate smoothness assumptions on $\alpha $. L. G. Gyurko, in his
thesis, gave mixed smoothness conditions that ensure the integral is well
defined. In this paper we introduce a geometrically richer class of
integrands (needed to get the algebraic closure) and provide a stronger and
more intrinsic approach to the integration of one-forms.

Starting with a geometric rough path $X$, one sees from \cite%
{gubinelli2004controlling} (equation $\left( 18\right) $, and the next
section about integration) that a path controlled by $X$ is a path $Y$ that
can formally be identified with the integral of a time varying one-form $G\ $%
against $X$ and modified by (or identified up to) a path of bounded $q$%
-variation where $p^{-1}+q^{-1}>1$. The smoothness condition assumed for $G$
in \cite{gubinelli2004controlling} is less restrictive than ours and does
not guarantee the existence of the integral against $X$. On the other hand,
geometrically the condition on $G$ in \cite{gubinelli2004controlling} is
more restrictive because $G$ is a one-form on the base Banach space whereas
we consider one-forms on the group. This allows us to prove an algebra
property for integrals and also makes the dominated rough path a function of
the integrand giving a simple linear parameterisation of the space of
controlled rough paths. We could also consider the semi-martingale like
spaces when one adds a $q$-variation perturbation. All integrals make sense
as was observed in \cite{lyons1998differential} page 259. However, it seems
interesting to understand the integrals as a class in its own right before
considering such perturbations. This way we introduce a natural and
approximately dense space of functions on rough path space. Adding the $q$%
-variation perturbations would remove that clarity.

The initial goal of the theory of rough paths is to tackle the
non-closability of the integral map for paths of low regularity. Lyons \cite%
{lyons1998differential} observed that the integral map becomes continuous
(and so closable), if one lifts the original integral in a Banach space to a
consistent integral in a topological group. This lift is essentially
nonlinear due to the nonlinearity of the group and is provably necessary
even to have an integral defined deterministically for almost all Brownian
sample paths. The integrals of a fixed rough path are jointly a rough path
so collectively they have a linear structure, which is important in the
proof of the unique existence of solutions to rough differential equations 
\cite{lyons1998differential}. The linear structure is captured in a
beautiful way by Gubinelli \cite{gubinelli2004controlling,
gubinelli2010ramification} and he defined weakly controlled paths as a class
of paths whose local behavior is comparable to a given rough path. For a
fixed reference rough path, the space of controlled paths is linear, and
there exists a canonical enhancement of a controlled path to a group-valued
path (when $2\leq p<3$). The linearity of space and the existence of
canonical enhancement are nice properties that general rough paths can not
have, and they give considerable convenience e.g. when solving a rough
differential equation. In \cite{gubinelli2010ramification}, Gubinelli
defined branched rough paths, and established the relationship between the
evolution of a branched rough path and the Connes-Kremier Hopf algebra \cite%
{connes1998hopf} (see also Butcher group \cite{butcher1972algebraic}). He
defined weakly controlled paths for branched rough paths, and defined the
integration of a weakly controlled path for $p\geq 1$. More recently, Friz
and Hairer \cite{friz2014course} summarized key theorems in the theory of
rough paths by employing Gubinelli's approach, and combined it with a brief
introduction to the recent breakthrough made by the theory of regularity
structures \cite{hairer2014theory}. In particular they defined controlled
rough paths as functions taking values in tensor algebra and defined the
integration of a controlled path accordingly. The theory of rough paths has
a wealth of literature, and there are many other formulations, e.g. \cite%
{Davie07differentialequations, friz2008euler, feyel2006curvilinear,
hu2009rough} etc. For a range of more detailed expositions, see \cite%
{lyons2002system, lejay2003introduction, lejay2009yet,
lyons2007differential, friz2010multidimensional}.

We used the graded algebraic structure in proving the existence of the
integral and in defining the set of dominated paths, so our setting is not
far from the tensor algebra used in \cite%
{lyons1998differential,gubinelli2004controlling} and the Connes-Kremier Hopf
algebra used in \cite{gubinelli2010ramification}. The barrier between tensor
algebra and Connes-Kremier Hopf algebra is not rigid. In \cite%
{hairer2015geometric} Hairer and Kelly proved that branched rough paths can
equally be defined as H\"{o}lder paths taking values in some Lie group, and
that every branched rough path can be encoded in a geometric rough path via
a graded morphism of Hopf algebras so that solving a differential equation
driven by a branched rough path is equivalent to solving an extended
differential equation driven by a geometric rough path. In this paper, we
identify structural properties of a Banach algebra and its associated
topological group that enable basic operations, and construct an algebraic
framework that subsumes tensor algebra and Connes-Kremier Hopf algebra. The
theory of rough paths provides a natural framework to integrate group-valued
paths, and is the incentive of this paper.

Popular approaches to rough integration use a representation of the group in
the truncated tensor algebra to linearize the group-valued path, and treat a
rough path as the collection of several Banach-space valued paths with
certain algebraic structure. We emphasize here an alternative
algebraic/geometrical approach, and develop an integration directly for
one-forms against group-valued paths. The generalization is needed to get
the algebraic closure of integrals. Indeed, suppose $w$ is Brownian motion,
and $\gamma ^{i}$, $i=1,2$, are suitable integrands. We want to find $\gamma 
$ that satisfies%
\begin{equation*}
\int_{0}^{\cdot }\gamma _{u}dw_{u}\overset{?}{=}\int_{0}^{\cdot }\gamma
_{u}^{1}dw_{u}\int_{0}^{\cdot }\gamma _{u}^{2}dw_{u}\text{.}
\end{equation*}%
Based on It\^{o}'s lemma there exists a drift term in the product that can
not be represented in the form of an integral against $w$, so such $\gamma $
does not exist (see Theorem 1 \cite{friz2012doob} for a pathwise
generalization). A generalized integral is therefore needed to get the
algebraic closure (even for almost all Brownian sample paths) that is
important for proving density in paths space. The integral developed here is
rich enough to handle the product structure of rough integrals, and the
multiplication is in fact a continuous operation in the space of one-forms
(Proposition \ref{Proposition Algebra}). In particular, suppose $y^{i}$, $%
i=1,2$, solves the It\^{o} differential equation $dy^{i}=f\left(
y^{i}\right) dw$, $y_{0}^{i}=\xi ^{i}$, for Brownian motion $w$. By using
the integral developed in this paper, $y^{i}$ and their product $y^{1}y^{2}$
solve the same type of equation, and the product $\left( y^{1},y^{2}\right)
\mapsto y^{1}y^{2}$ is a continuous operation. The algebra structure is
compatible with the filtration generated on paths space, and the product of
two previsible integrable one-forms is another previsible integrable
one-form. This is the same property that was exploited in the Martingale
Representation Theorem.

By introducing a family of closed one-forms on a group, we construct the
integral of one-forms against group-valued paths, and provide a simple and
unified interpretation of the extension theorem and the theories of
integration in rough paths theory \cite{lyons1998differential,
gubinelli2004controlling, gubinelli2010ramification}. We are able to allow
the one-form on the group to vary with time. As a consequence, the integral
is not restricted to use the same cotangent vector at distinct times of
self-intersection.

We identity structural properties of the group that enable basic operations,
e.g. rough integration and iterated integration. Condition \ref{Condition g
satisfies differential equation}' is for rough integration, and encodes the
information of how to integrate monomials against degree-one monomial on
paths space (Corollary \ref{Example rough integral}, Remark \ref{Remark
polynomial one-form Butcher group}). Condition \ref{Condition g satisfies
differential equation} is for iterated integration that encodes the
information of how to integrate monomials against monomials (not only
degree-one monomial) on paths space (Proposition \ref{Proposition
enhancement}, Corollary \ref{Example weakly controlled path}). These
algebraic conditions are structural assumptions on the group, and can be
viewed as counterparts to Chen's identity (that is about paths evolution) in
paths integration.

The process of rough integration can be split into two (independent) steps:
(1) integrating polynomials and increasing the regularity of the Lipschitz
function (2) constructing a group homomorphism from the Lipschitz function.
The integration we constructed (Theorem \ref{Theorem integrating slow
varying cocyclic one forms}) is about the homomorphism, and is not
(directly) related to increasing the regularity of the Lipschitz function.
The regularity of the Lipschitz function is increased when we specify the
one-form (Corollary \ref{Example rough integral}, Remark \ref{Remark
polynomial one-form Butcher group}).

By using one-forms, basic operations --- such as iterated integration,
multiplication and composition with regular functions --- are continuous
operations (Section \ref{Section stableness of dominated paths}). The
continuity gives considerable analytical robustness. In particular, the
enhancement to a group-valued path is a continuous operation. That is
applicable when the enhancement involves basic operations e.g. taking values
in nilpotent Lie group or Butcher group.

The approach is employed in \cite{lyons2015theory} to extend an argument of
Schwartz \cite{schwartz1989convergence} to rough differential equations, and
give a short proof of the global unique solvability and stability of the
solution that is applicable to geometric rough paths and branched rough
paths. Consider the rough differential equation%
\begin{equation*}
dy=f\left( y\right) dx\text{, }y_{0}=\xi \text{,}
\end{equation*}%
with Picard iterations $y_{\cdot }^{n}:=\xi +\int_{0}^{\cdot }f\left(
y^{n-1}\right) dx$, $n\geq 1$, $y_{\cdot }^{0}\equiv \xi $. When $f$ is $%
\limfunc{Lip}\left( \gamma \right) $ for $\gamma >p$, there exist one-forms $%
\left( \beta ^{n}\right) _{n}$ such that $y_{\cdot }^{n}=\xi
+\int_{0}^{\cdot }\beta ^{n}\left( g\right) dg$ and $\left( \beta
^{n+1}-\beta ^{n}\right) _{n}$ decay factorially in operator norm (Theorem
22 \cite{lyons2015theory}). More explicitly, there is a constant $%
C=C(p,\gamma ,\left\Vert f\right\Vert _{Lip\left( \gamma \right) },\omega
\left( 0,T\right) )$ such that, with $\left\Vert \cdot \right\Vert _{\theta
}^{\omega }$ in $\left( \ref{Definition of operator norm}\right) $, control $%
\omega :=\left\Vert g\right\Vert _{p-var}^{p}$ and $\theta :=\frac{\gamma
\wedge \left( \left[ p\right] +1\right) }{p}>1$, we have 
\begin{equation*}
\left\Vert \beta ^{n+1}-\beta ^{n}\right\Vert _{\theta }^{\omega }\leq \frac{%
C^{n-\left[ p\right] }}{\left( \frac{n-\left[ p\right] }{p}\right) !}\text{, 
}n\geq \left[ p\right] +1\text{.}
\end{equation*}%
Since the enhancement to a group-valued path is a continuous operation in
the space of one-forms and the indefinite integral is a continuous operation
from one-forms to paths, we have the convergence of Picard iterations and
their group-valued enhancements.

The basic idea of the integral in this paper can be summarized as follows.
Suppose $x$ is a continuous bounded variation path taking values in $%
\mathbb{R}
^{d}$ and $\alpha $ is a one-form. Let $G^{(n)}(%
\mathbb{R}
^{d})$ denote the step-$n$ free nilpotent Lie group over $%
\mathbb{R}
^{d}$ (see \cite{lyons1998differential}) with projection $\pi $ onto $%
\mathbb{R}
^{d}$. Then we may lift $x$ up to a path $g$ taking values in $G^{(n)}(%
\mathbb{R}
^{d})$\ (the signature of $x$), pull $\alpha $ up to $\alpha ^{\ast }$ using 
$\pi $, and get%
\begin{equation*}
\int \alpha \left( x\right) dx=\int \alpha ^{\ast }\left( g\right) dg\text{.}
\end{equation*}%
This equality holds because $x=\pi g$ and has little to do with $g_{t}\in
G^{(n)}(%
\mathbb{R}
^{d})$. Since the dimension of the tangent space to $g$ in $G^{(n)}(%
\mathbb{R}
^{d})$ is much larger than that of $\mathcal{%
\mathbb{R}
}^{d}$, there are other choices of one-forms that give the same integral.\
In particular, when $\alpha =p$ is a polynomial one-form of degree $(n-1)$,
there is a unique closed (in fact cocyclic) one-form $P$ on $G^{(n)}(%
\mathbb{R}
^{d})$ (as defined in Example \ref{Example polynomial cocyclic one-form}
below) that only depends on $p$ and satisfies%
\begin{equation}
\int p\left( x\right) dx=\int P\left( g\right) dg\text{.}
\label{integral of p}
\end{equation}%
Because this statement holds exactly rather than infinitesimally, it gives
considerable analytical flexibility. As we suggested before, Lipschitz
functions are continuous functions taking values in polynomial functions.
Since each polynomial one-form can be lifted to a closed one-form on the
nilpotent Lie group, a Lipschitz one-form can be lifted to a continuous
function taking values in closed one-forms on the nilpotent Lie group. In
particular for Lipschitz function $\alpha $, if we denote the lift of $%
\alpha $ by $\beta $, then for any continuous bounded variation path $x$
with lift $g$,%
\begin{equation}
\int \alpha \left( x\right) dx=\int \beta \left( g\right) dg\text{.}
\label{integral of alpha}
\end{equation}%
When $x$ is not necessarily of bounded variation, the integrals $\int
p\left( x\right) dx$ in $\left( \ref{integral of p}\right) $ and $\int
\alpha \left( x\right) dx$ in $\left( \ref{integral of alpha}\right) $ may
not be meaningfully defined. Since $P$ is a closed one-form, the integral $%
\int P\left( g\right) dg$ is well defined for any continuous path $g$. Since 
$\beta $ is a function taking values in closed one-forms in the form of $P$,
the integral $\int \beta \left( g\right) dg$ should still make sense when
the closed one-form varies slowly. For example, in the extreme case that $%
\beta $ is a constant closed one-form $P$, $\int \beta \left( g\right) dg$
coincides with $\int P\left( g\right) dg$ so is well defined. Indeed, it can
be proved that when $\beta $ is the lift of a $Lip\left( \gamma \right) $
function and $g$ is a continuous path with finite $p$-variation for $\frac{%
\gamma +1}{p}>1$, the integral $\int \beta \left( g\right) dg$ is well
defined and coincides with the rough integral in \cite{lyons1998differential}%
. The existence of the integral $\int \beta \left( g\right) dg$ relies on
that $\beta $ consists a family of well-behaved one-forms (the $P$s) that
vary slowly along the trajectory of $g$ (captured by the Lipschitz degree).
Closed one-forms are well-behaved because they put little assumption on the
path for the integral to make sense, and they can serve as basic ingredients
in the space of more general one-forms (like constant one-forms in the space
of continuous one-forms in the classical integration). Generally, the
integral $\int_{t\in \left[ 0,T\right] }\beta _{t}\left( g_{t}\right) dg_{t}$
is well defined, when $\beta _{t}$ is a continuous path taking values in
closed one-forms and the two dual paths $\beta _{t}$ and $g_{t}$ satisfy a
generalized Young condition (see construction of the integral in Section \ref%
{Subsection existence of integral}).

Polynomial one-forms are\ basic ingredients for the rough integration in 
\cite{lyons1998differential}, and serve as the primary example in this paper.

\subsection{Cocyclic one-forms}

Cocyclic one-forms are closed one-forms on a topological group. They can be
integrated against \textit{any} continuous path taking values in the group,
and the value of the integral only depends on the path through end points.

Suppose $\mathcal{A}$ and $\mathcal{B}$ are two Banach algebras and $%
\mathcal{G}$ is a topological group in $\mathcal{A}$. We denote by $L\left( 
\mathcal{A},\mathcal{B}\right) $ the set of continuous linear mappings from $%
\mathcal{A}$ to $\mathcal{B}$, and denote by $C\left( \mathcal{G},L\left( 
\mathcal{A},\mathcal{B}\right) \right) $ the set of continuous mappings from 
$\mathcal{G}$ to $L\left( \mathcal{A},\mathcal{B}\right) $.

\begin{definition}[Cocyclic One-Form]
\label{Definition of cocyclic one-form}We say $\beta \in C\left( \mathcal{G}%
,L\left( \mathcal{A},\mathcal{B}\right) \right) $ is a cocyclic one-form, if
there exists a topological group $\mathcal{H}$ in $\mathcal{B}$ such that $%
\beta \left( a,b\right) \in \mathcal{H}$, $\forall a,b\in \mathcal{G}$, and 
\begin{equation}
\beta \left( a,b\right) \beta \left( ab,c\right) =\beta \left( a,bc\right) 
\text{, }\forall a,b,c\in \mathcal{G}\text{.}  \label{definition of cocycle}
\end{equation}%
We denote the set of cocyclic one-forms by $B\left( \mathcal{G},\mathcal{H}%
\right) $ (or $B\left( \mathcal{G}\right) $).
\end{definition}

Equation $\left( \ref{definition of cocycle}\right) $ represents the exact
equality between the one-step and two-steps estimates that characterizes
closed one-forms. Intuitively, if we start from point $a$ and go in the
direction of $bc$, then it is equivalent to start from point $a$, go in the
direction of $b$, and start again from point $ab$ and then go in the
direction of $c$.

Cocyclic one-forms are of specific form but abundant; they are fundamental
in integration. A simple example is the constant one-form on a Banach space,
which we use implicitly in the classical integration. Another example is the
polynomial one-form in rough paths theory \cite{lyons1998differential}.

Recall that a polynomial function of degree (at most) $n$ is a globally
defined function whose $(n+1)^{\text{th}}$ derivative exists and is
identically zero.

\begin{definition}[Polynomial One-Form]
For Banach spaces $\mathcal{V}$ and $\mathcal{U}$, we say $p\in C\left( 
\mathcal{V},L\left( \mathcal{V},\mathcal{U}\right) \right) $ is a polynomial
one-form of degree $n$ if $p$ is a polynomial function of degree $n$ taking
values in $L\left( \mathcal{V},\mathcal{U}\right) $.
\end{definition}

In particular, by using Taylor's theorem, we have (with $\otimes $ denoting
the tensor product) 
\begin{equation}
p\left( z\right) \left( v\right) =\sum_{l=0}^{n}\left( D^{l}p\right) \left(
y\right) \frac{\left( z-y\right) ^{\otimes l}}{l!}\left( v\right) \text{, }%
\forall z,v,y\in \mathcal{V}\text{,}  \label{polynomial one form}
\end{equation}%
where $\left( D^{l}p\right) \left( y\right) \in L\left( \mathcal{V}^{\otimes
l},L\left( \mathcal{V},\mathcal{U}\right) \right) $ denotes the value at $y$
of the $l$-th derivative of $p$. Like polynomial functions, there is neither
a given point nor a power series associated with a polynomial one-form. One
can choose different representations of a polynomial one-form by choosing
the point $y$ in $\left( \ref{polynomial one form}\right) $, and the value
of $p\ $does not depend on the choice of $y$.

Then we enhance a polynomial one-form to a cocyclic one-form on a group. Let 
$G^{(n)}\left( \mathcal{V}\right) $ denote the step-$n$ free nilpotent Lie
group over Banach space $\mathcal{V}$, which is canonically embedded in the
Banach algebra $T^{(n)}\left( \mathcal{V}\right) =%
\mathbb{R}
\oplus \mathcal{V}\oplus \cdots \oplus \mathcal{V}^{\otimes n}$, and let $%
\pi _{k}$ denote the projection of $T^{(n)}\left( \mathcal{V}\right) $ to $%
\mathcal{V}^{\otimes k}$. For integers $l_{i}\geq 1$, $i=1,2,\dots ,k$, we
let 
\begin{equation*}
OS\left( l_{1},l_{2},\dots ,l_{k}\right)
\end{equation*}%
denote the ordered shuffles of $k$ stacks of cards with $l_{1},l_{2},\dots
,l_{k}$ cards respectively (p.73, \cite{lyons2007differential}).

\begin{example}[Polynomial Cocyclic One-Form]
\label{Example polynomial cocyclic one-form}Suppose $p\in C\left( \mathcal{V}%
,L\left( \mathcal{V},\mathcal{U}\right) \right) $ is a polynomial one-form
of degree $\left( n-1\right) $ for some integer $n\geq 1$. We define $P\in
C(G^{(n^{2})}\left( \mathcal{V}\right) ,L(T^{(n^{2})}\left( \mathcal{V}%
\right) ,T^{(n)}\left( \mathcal{U}\right) ))$ by, for $a\in
G^{(n^{2})}\left( \mathcal{V}\right) $ and $v\in T^{(n^{2})}\left( \mathcal{V%
}\right) $,%
\begin{equation}
P\left( a,v\right) :=1+\sum_{k=1}^{n}\sum_{l_{i}\in \left\{ 0,1,\dots
,n-1\right\} }\left( \left( D^{l_{1}}p\right) \otimes \cdots \otimes \left(
D^{l_{k}}p\right) \right) \left( \pi _{1}\left( a\right) \right) \sum_{\rho
\in OS\left( l_{1}+1,\dots ,l_{k}+1\right) }\rho ^{-1}\left( \pi
_{l_{1}+\cdots +l_{k}+k}\left( v\right) \right) \text{.}
\label{expression of polynomial cocyclic one-form}
\end{equation}%
Then $P$ is a cocyclic one-form, i.e. for $a,b\in G^{(n^{2})}\left( \mathcal{%
V}\right) $, $P\left( a,b\right) \in G^{(n)}(\mathcal{U})$, and%
\begin{equation*}
P\left( a,b\right) P\left( ab,c\right) =P\left( a,bc\right) \text{, }\forall
a,b,c\in G^{(n^{2})}\left( \mathcal{V}\right) \text{.}
\end{equation*}
\end{example}

For an explanation of the mathematical expression, we suppose $x$ is a
continuous bounded variation path on $\left[ 0,T\right] $ taking values in $%
\mathcal{V}$, and let $S_{n}\left( x\right) $ denote the step-$n$ Signature
of $x$: 
\begin{equation}
S_{n}\left( x\right) _{s,t}:=1+\sum_{k=1}^{n}x_{s,t}^{k}\text{ with }%
x_{s,t}^{k}:=\idotsint\nolimits_{s<u_{1}<\cdots <u_{k}<t}dx_{u_{1}}\otimes
\cdots \otimes dx_{u_{k}}\text{, }\forall 0\leq s\leq t\leq T\text{.}
\label{definition of signature}
\end{equation}%
Based on Chen \cite{chen2001iterated}, $S_{n}\left( x\right) $ takes values
in the step-$n$ nilpotent Lie group $G^{(n)}\left( \mathcal{V}\right) $, and
satisfies: 
\begin{equation*}
\text{\textbf{(Chen's Identity)} }S_{n}\left( x\right) _{s,u}S_{n}\left(
x\right) _{u,t}=S_{n}\left( x\right) _{s,t}\text{, }\forall 0\leq s\leq
u\leq t\leq T\text{,}
\end{equation*}%
where the multiplication on the l.h.s. is in $G^{(n)}\left( \mathcal{V}%
\right) $. In particular, $\left[ 0,T\right] \ni t\mapsto S_{n}\left(
x\right) _{0,t}\in G^{(n)}\left( \mathcal{V}\right) $ is a group-valued path
satisfying $S_{n}\left( x\right) _{0,s}^{-1}S_{n}\left( x\right)
_{0,t}=S_{n}\left( x\right) _{s,t}$, $\forall 0\leq s\leq t\leq T$. Since $%
\left( D^{l}p\right) \left( x_{s}\right) \in L\left( \mathcal{V}^{\otimes
l},L\left( \mathcal{V},\mathcal{U}\right) \right) $ is symmetric in $%
\mathcal{V}^{\otimes l}$ and the projection of $x_{s,t}^{l}$ to the space of
symmetric tensors is $\left( l!\right) ^{-1}\left( x_{t}-x_{s}\right)
^{\otimes l}$ (see \cite{lyons1998differential}), we have 
\begin{equation}
\left( D^{l}p\right) \left( x_{s}\right) \frac{\left( x_{t}-x_{s}\right)
^{\otimes l}}{l!}\left( v\right) =\left( D^{l}p\right) \left( x_{s}\right)
\left( x_{s,t}^{l}\right) \left( v\right) \text{, }\forall v\in \mathcal{V}%
\text{, }\forall 0\leq s\leq t\leq T\text{.}
\label{polynomial one-form symmetric tensor}
\end{equation}%
Then, based on the expressions $\left( \ref{polynomial one form}\right) $
and $\left( \ref{polynomial one-form symmetric tensor}\right) $, and by
using $x_{s,t}^{l+1}=\int_{s}^{t}x_{s,r}^{l}\otimes dx_{r}$, $l=0,\dots ,n-1$%
, we have 
\begin{equation*}
\int_{s}^{t}p\left( x_{r}\right) dx_{r}=\sum_{l=0}^{n-1}\left( D^{l}p\right)
\left( x_{s}\right) \int_{s}^{t}\frac{\left( x_{r}-x_{s}\right) ^{\otimes l}%
}{l!}\otimes dx_{r}=\sum_{l=0}^{n-1}\left( D^{l}p\right) \left( x_{s}\right)
\int_{s}^{t}x_{s,r}^{l}\otimes dx_{r}=\sum_{l=0}^{n-1}\left( D^{l}p\right)
\left( x_{s}\right) x_{s,t}^{l+1}\text{,}
\end{equation*}%
where $\left( D^{l}p\right) \left( x_{s}\right) $ contracts with $%
x_{s,t}^{l+1}$ because $\left( D^{l}p\right) \left( x_{s}\right) \in L\left( 
\mathcal{V}^{\otimes l},L\left( \mathcal{V},\mathcal{U}\right) \right) $ so $%
\left( D^{l}p\right) \left( x_{s}\right) \left( x_{s,r}^{l}\right) \in
L\left( \mathcal{V},\mathcal{U}\right) $ and $\left( D^{l}p\right) \left(
x_{s}\right) \int_{s}^{t}x_{s,r}^{l}\otimes dx_{r}=\left( D^{l}p\right)
\left( x_{s}\right) x_{s,t}^{l+1}\in \mathcal{U}$. Hence, for $0\leq s\leq
u\leq t\leq T$, by using $\left( \ref{polynomial one form}\right) $ and that 
$\int_{u}^{t}=\int_{s}^{t}-\int_{s}^{u}$, we have%
\begin{eqnarray*}
\sum_{l=0}^{n-1}\left( D^{l}p\right) \left( x_{u}\right) x_{u,t}^{l+1}
&=&\sum_{l=0}^{n-1}\left( D^{l}p\right) \left( x_{u}\right)
\int_{u}^{t}x_{u,r}^{l}\otimes dx_{r}=\int_{u}^{t}p\left( x_{r}\right)
dx_{r}=\sum_{l=0}^{n-1}\left( D^{l}p\right) \left( x_{s}\right)
\int_{u}^{t}x_{s,r}^{l}\otimes dx_{r} \\
&=&\sum_{l=0}^{n-1}\left( D^{l}p\right) \left( x_{s}\right)
x_{s,t}^{l+1}-\sum_{l=0}^{n-1}\left( D^{l}p\right) \left( x_{s}\right)
x_{s,u}^{l+1}\text{.}
\end{eqnarray*}%
As a result, if we define path $y:\left[ 0,T\right] \rightarrow \mathcal{U}$
by%
\begin{equation*}
y_{t}:=\int_{0}^{t}p\left( x_{r}\right) dx_{r}=\sum_{l=0}^{n-1}\left(
D^{l}p\right) \left( x_{0}\right) x_{0,t}^{l+1}\text{, \ \ }\forall 0\leq
t\leq T\text{,}
\end{equation*}%
then 
\begin{equation}
y_{t}-y_{s}=\sum_{l=0}^{n-1}\left( D^{l}p\right) \left( x_{s}\right)
x_{s,t}^{l+1}\text{, \ \ }\forall 0\leq s\leq t\leq T\text{.}
\label{increment of y}
\end{equation}%
Based on the definition of the Signature in $\left( \ref{definition of
signature}\right) $ and the representation of $y$ in $\left( \ref{increment
of y}\right) $, we have 
\begin{equation}
S_{n}\left( y\right) _{s,t}=1+\sum_{k=1}^{n}\sum_{l_{i}\in \left\{ 0,1,\dots
,n-1\right\} }\left( \left( D^{l_{1}}p\right) \otimes \cdots \otimes \left(
D^{l_{k}}p\right) \right) \left( x_{s}\right)
\idotsint\limits_{s<u_{1}<\cdots <u_{k}<t}dx_{s,u_{1}}^{l_{1}+1}\otimes
\cdots \otimes dx_{s,u_{k}}^{l_{k}+1}\text{.}
\label{representation of step-m siganture of y}
\end{equation}%
By using the ordered shuffle product (p. 73, \cite{lyons2007differential}), $%
\tidotsint\nolimits_{s<u_{1}<\cdots <u_{k}<t}dx_{s,u_{1}}^{l_{1}+1}\otimes
\cdots \otimes dx_{s,u_{k}}^{l_{k}+1}$ can be rewritten as a universal
continuous linear function of $S_{n^{2}}\left( x\right) _{s,t}$ that is
independent of $x$:%
\begin{equation}
\tidotsint\nolimits_{s<u_{1}<\cdots <u_{k}<t}dx_{s,u_{1}}^{l_{1}+1}\otimes
\cdots \otimes dx_{s,u_{k}}^{l_{k}+1}=\tsum\nolimits_{\rho \in OS\left(
l_{1}+1,\dots ,l_{k}+1\right) }\rho ^{-1}\left( \pi _{l_{1}+\cdots
+l_{k}+k}(S_{n^{2}}\left( x\right) _{s,t})\right) \text{.}
\label{representation of step-m siganture of y 2}
\end{equation}%
As we mentioned before, if we denote $g_{t}^{n^{2}}:=S_{n^{2}}\left(
x\right) _{0,t}$, $\forall t\in \left[ 0,T\right] $, then $g^{n^{2}}$ is a
group-valued path taking values in the step-$n^{2}$ free nilpotent Lie group 
$G^{(n^{2})}\left( \mathcal{V}\right) $. Based on the representations $%
\left( \ref{representation of step-m siganture of y}\right) $ and $\left( %
\ref{representation of step-m siganture of y 2}\right) $, if we define $P$
as in $\left( \ref{expression of polynomial cocyclic one-form}\right) $,
then, with $g_{s,t}^{n^{2}}:=(g_{s}^{n^{2}})^{-1}g_{t}^{n^{2}}$,%
\begin{equation}
P\left( g_{s}^{n^{2}},g_{s,t}^{n^{2}}\right) =S_{n}\left( y\right) _{s,t}%
\text{, }\forall 0\leq s\leq t\leq T\text{.}  \label{definition of P}
\end{equation}%
Based on $\left( \ref{definition of P}\right) $, $P$ takes values in the
step-$n$ nilpotent Lie group $G^{(n)}\left( \mathcal{U}\right) $ and
satisfies Chen's identity:%
\begin{equation}
P\left( g_{s}^{n^{2}},g_{s,u}^{n^{2}}\right) P\left(
g_{u}^{n^{2}},g_{u,t}^{n^{2}}\right) =P\left(
g_{s}^{n^{2}},g_{s,t}^{n^{2}}\right) \text{, }\forall 0\leq s\leq u\leq
t\leq T\text{.}  \label{polynomial one form is cocyclic}
\end{equation}%
Comparing $\left( \ref{polynomial one form is cocyclic}\right) $ with the
cocyclic property defined at $\left( \ref{definition of cocycle}\right) $,
we have that, $P$ is a cocyclic one-form on group $G^{(n^{2})}\left( 
\mathcal{V}\right) $ taking values in another group $G^{(n)}\left( \mathcal{U%
}\right) $.

That polynomial one-forms can be lifted to closed (cocyclic) one-forms is
not a coincidence, and follows from properties of polynomials on paths
space. A polynomial is a finite linear combination of monomials of an
indeterminate. A polynomial on paths space in our setting is a finite linear
combination of monomials of a path, i.e. its iterated integrals (see Chen 
\cite{chen1957integration} for geometrical interpretation of monomials on
paths space). (Following from algebraic properties of the group, classical
polynomials correspond naturally to polynomials on paths space, see
Corollary \ref{Example rough integral} and Remark \ref{Remark polynomial
one-form Butcher group}.) The space of paths (modulus translations,
reparametrisations and tree-like equivalence \cite{hambly2010uniqueness,
boedihardjo2014signature}) forms a group when equipped with the product
given by concatenation \cite{lyons1998differential}. The step-$n$ signature $%
S_{n}\left( x\right) $ can be viewed as the graded composition of the first $%
n$ monomials of the path $x$. Thus Chen's identity encodes the change of the
representation of a polynomial on paths space as the reference point (a
path) changes. In particular, for a continuous linear mapping $A:T^{\left(
n\right) }\left( \mathcal{V}\right) \rightarrow \mathcal{U}$, $p\left(
x\right) :=AS_{n}\left( x\right) $ is the representation of a polynomial
when the reference point is the constant path. Based on Chen's identity, the
representation of $p$ at a path $y$ is $p\left( x\right) =AS_{n}\left(
y\right) S_{n}\left( \overleftarrow{y}\sqcup x\right) $ (with $%
\overleftarrow{y}\sqcup x$ denotes the concatenation of $y$ backwards with $%
x $). From this perspective, rough integration (for geometric rough paths
and branched rough paths alike) can be viewed as the sewing process of
constructing a group homomorphism. Indeed, for a given rough path, rough
integration constructs from a family of polynomials (indexed by the
evolution of the rough path) a homomorphism (from paths space to another
group) with prescribed local behavior (the homomorphism and the Lipschitz
function are locally equivalent). That the family of polynomials on paths
space is stable under the signature mapping is based on algebraic properties
of the group --- the existence of the mapping $\mathcal{I}$ in Condition \ref%
{Condition g satisfies differential equation}. The cocyclic one-form
associated with the signature is simple: $\beta \left( a,b\right) =b$, $%
\forall a,b\in \mathcal{G}$, so $\beta \left( a,b\right) \beta \left(
ab,c\right) =bc=\beta \left( a,bc\right) $, $\forall a,b,c\in \mathcal{G}$.
The equality $\left( \ref{polynomial one form is cocyclic}\right) $ is hence
a synthesis of Chen's identity about changing the reference point of a
polynomial and the stability of polynomials under the signature mapping,
encodes the change of a cocyclic one-form under the change of the base
group-valued path, and is a transitive property (see also Proposition \ref%
{Proposition Transitivity}).

The following diagram illustrates the relationship between paths and
one-forms:

\begin{center}
\begin{tikzpicture}
  \matrix (m) [matrix of math nodes,row sep=3em,column sep=0em,minimum width=2em, column 1/.style={anchor=base east}, column 7/.style={anchor=base west}]
  {
   C\left(\left[0,T\right],G^{(n)}(\mathcal{V})\right)& \ni  & g^n     & + & \beta &\in & C\left(\left[0,T\right],
   B\left(G^{(n)}(\mathcal{V}),G^{(n)}(\mathcal{U})\right)\right)\\
   C(\left[0,T\right],G^{(n^2)}(\mathcal{V}))& \ni & g^{n^2} & + & P  & \in & B(G^{(n^2)}(\mathcal{V}),G^{(n)}(\mathcal{U}))\\
   C\left(\left[0,T\right],\mathcal{V}\right)      &\ni      & x       & + & p  &\in   & C\left(\mathcal{V},L\left(\mathcal{V},\mathcal{U}\right)\right)\\};
  \path[-stealth]
    ($(m-3-3.north)+(+0.05,0)$) edge [dashed] node [right] {signature}($(m-2-3.south)+(+0.05,0)$)
    ($(m-2-3.south)+(-0.05,0)$) edge node [left] {projection} ($(m-3-3.north)+(-0.05,0)$)
    ($(m-2-3.north)+(-0.05,0)$) edge node [left] {truncation}($(m-1-3.south)+(-0.05,0)$)
    ($(m-1-3.south)+(+0.05,0)$) edge node [right] {extension} ($(m-2-3.north)+(+0.05,0)$);
\end{tikzpicture}
\end{center}

The lower half-diagram summarizes the polynomial one-form and the polynomial
cocyclic one-form we discussed above. Suppose $x$ is a continuous bounded
variation path on $\left[ 0,T\right] $ taking values in $\mathcal{V}$ and $%
p\in C\left( \mathcal{V},L\left( \mathcal{V},\mathcal{U}\right) \right) \ $%
is a polynomial one-form. For integer $n\geq 1$, we can enhance $x$ to its
step-$n^{2}$ signature $g^{n^{2}}$, which is a continuous path taking values
in the step-$n^{2}$ free nilpotent Lie group $G^{(n^{2})}\left( \mathcal{V}%
\right) $. (We put a dashed arrow because, when $x$ is less regular, e.g. a
Brownian sample path, the enhancement exists but may not be unique, see \cite%
{lyons2007extension}.) We can define the polynomial cocyclic one-form $P\ $%
associated with $p$ on $G^{(n^{2})}\left( \mathcal{V}\right) $ taking values
in $G^{(n)}\left( \mathcal{U}\right) $ as in $\left( \ref{expression of
polynomial cocyclic one-form}\right) $. Then based on $\left( \ref%
{definition of P}\right) $,%
\begin{equation}
P\left( g_{s}^{n^{2}},g_{s,t}^{n^{2}}\right) =S_{n}\left( \int_{0}^{\cdot
}p\left( x_{u}\right) dx_{u}\right) _{s,t}\text{, }\forall 0\leq s<t\leq T%
\text{.}  \label{polynomial cocyclic one-form}
\end{equation}

The upper half-diagram is about truncation and keeping the homogeneity of
the degree of the groups so that it would not explode after several
compositions. The lifting to degree-$n^{2}$ is necessary to keep a closed
expression: although polynomials on paths space form an algebra, polynomials
of a specific degree do not form one --- the degree-$n$ polynomial of a
degree-$n$ polynomial is a degree-$n^{2}$ polynomial. If we would like to
define a one-form on $G^{(n)}\left( \mathcal{V}\right) $ (instead of $%
G^{(n^{2})}\left( \mathcal{V}\right) $), we can truncate $g^{n^{2}}$ to $%
g^{n}$, and define $\beta $ based on the truncation of $P$ as a time-varying
cocyclic one-form on $G^{(n)}(\mathcal{V})$ (see Corollary \ref{Example
rough integral}). When $g^{n}$ is of finite $p$-variation for some $\left[ p%
\right] \leq n$, $\beta $ is integrable against $g^{n}$ and satisfies%
\begin{equation}
\int_{s}^{t}\beta _{u}\left( g_{u}^{n}\right) dg_{u}^{n}=P\left(
g_{s}^{n^{2}},g_{s,t}^{n^{2}}\right) \text{, }\forall 0\leq s<t\leq T\text{.}
\label{slow-varying cocyclic polynomial one-form on n group}
\end{equation}%
Due to the truncation of higher leveled terms, the $\beta $ here is no
longer a constant cocyclic one-form as $P$ is, and the equality $\left( \ref%
{slow-varying cocyclic polynomial one-form on n group}\right) $ follows from
the comparison of local expansions. Actually there is no canonical way of
enhancing a polynomial one-form to a cocyclic one-form, and one could as
well define the cocyclic one-form obtained after truncation as the
polynomial cocyclic one-form (denoted by $P^{\prime }$). Then the
homogeneity of the group is preserved, and $\beta _{t}$ is a continuous path
taking values in polynomial cocyclic one-forms in the form of $P^{\prime }$.
In that case, the equality $\left( \ref{polynomial cocyclic one-form}\right) 
$ (with $P(g_{s}^{n^{2}},g_{s,t}^{n^{2}})$ replaced by $P^{\prime
}(g_{s}^{n},g_{s,t}^{n})$) holds not exactly but approximately with a small
error which will disappear in the process of integration, and the equality $%
\int_{s}^{t}\beta _{u}\left( g_{u}^{n}\right) dg_{u}^{n}=S_{n}\left(
\int_{0}^{\cdot }p\left( x_{u}\right) dx_{u}\right) _{s,t}$ still holds.\
The arrow from $g^{n}$ to $g^{n^{2}}$ is about the extension theorem
(Theorem \ref{Example extension}). Indeed, if $g^{n}$ is of finite $p$%
-variation for some $\left[ p\right] \leq n$, then there exists a unique $%
g^{n^{2}}$ which extends $g^{n}$, and $g^{n^{2}}$ can be represented as the
integral of a slow-varying cocyclic one-form against $g^{n}$.

By lifting a polynomial one-form on a Banach space to a polynomial cocyclic
one-form on the corresponding nilpotent Lie group, we linearize the one-form
by working with a larger affine space. The enlargement of the space is
sufficient and necessary for developing robust calculus for paths of low
regularity. More importantly, we view polynomial one-forms, which are by no
means closed one-forms in the classical sense, as closed one-forms on the
lifted group (see $\left( \ref{polynomial cocyclic one-form}\right) $). The
closedness of the one-forms relies on the stability of polynomials on paths
space, and ultimately on the algebraic structure of the group. The
closedness of the one-forms makes the analysis considerably simpler because
the integration of a closed one-form puts little restriction on the path. As
in the classical integration, where we implicitly work with slowly-varying
constant one-forms, we can develop integration of one-forms on a group by
slowly-varying the closed (cocyclic) one-forms on the group. In light of the
nice approximating properties of polynomials, by slowly-varying the
polynomial one-form and by taking closure of the polynomial one-forms
w.r.t.\ an appropriate norm, we can work with a large family of one-forms
(e.g. Lipschitz one-forms as in Corollary \ref{Example rough integral} and
time-varying Lipschitz one-forms as in Remark \ref{Remark integrating
time-varying Lipschitz functions}).

\subsection{Time-varying cocyclic one-forms}

As we suggested before, the integration of time-varying cocyclic one-forms
reduces to the integration of closed one-forms together with pasting closed
one-forms in a consistent way. Before proceeding to the definition of the
integral, we take a look at a little lemma, which shares the same spirit
with our group-valued integration.

\begin{definition}[Finite Partition]
$D=\left\{ t_{k}\right\} _{k=0}^{n}$ is a finite partition of $\left[ 0,T%
\right] $, if $0=t_{0}<t_{1}<\cdots <t_{n}=T$. We denote $\left\vert
D\right\vert :=\max_{k}\left\vert t_{k+1}-t_{k}\right\vert $.
\end{definition}

\begin{lemma}
For a differentiable manifold $M$, we suppose $x$ is a continuous path on $%
\left[ 0,T\right] $ taking values in $M$, and $\alpha $ is a path on $\left[
0,T\right] $ taking values in real-valued closed one-forms on $M$. If there
exists $\theta >1$ such that 
\begin{equation}
\left\vert \int_{u}^{t}\left( \alpha _{s}-\alpha _{u}\right) \left(
x_{r}\right) dx_{r}\right\vert \leq \left\vert t-s\right\vert ^{\theta }%
\text{, }\forall 0\leq s<u<t\leq T\text{,}
\label{condition one-form on manifold}
\end{equation}%
then the integral defined by%
\begin{equation}
\int_{0}^{T}\alpha _{r}\left( x_{r}\right) dx_{r}:=\lim_{\left\vert
D\right\vert \rightarrow 0,D=\left\{ t_{k}\right\} _{k=0}^{n}\subset \left[
s,t\right] }\sum_{k=0}^{n-1}\int_{t_{k}}^{t_{k+1}}\alpha _{t_{k}}\left(
x_{r}\right) dx_{r}\text{, }\forall 0\leq s<t\leq T\text{,}
\label{definition of integral of time-varying one-form on manifold}
\end{equation}%
exists, and satisfies, for some constant $C_{\theta }>0$,%
\begin{equation}
\left\vert \int_{s}^{t}\left( \alpha _{r}-\alpha _{s}\right) \left(
x_{r}\right) dx_{r}\right\vert \leq C_{\theta }\left\vert t-s\right\vert
^{\theta }\text{, }\forall 0\leq s<t\leq T\text{.}
\label{local estimate one-form on manifold}
\end{equation}
\end{lemma}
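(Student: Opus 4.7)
The plan is to recognize this statement as a manifold-valued instance of the Young/sewing lemma. First I would set
\begin{equation*}
\Xi_{s,t} := \int_s^t \alpha_s(x_r)\,dx_r,
\end{equation*}
which is unambiguously defined because $\alpha_s$ is a closed one-form on $M$ and so its line integral along the continuous path $x|_{[s,t]}$ can be evaluated chart-by-chart via local primitives (covering the compact trace $x([s,t])$ by finitely many coordinate balls on which $\alpha_s = dF_s$, the integral equals the telescoping sum of increments of $F_s$ along a sufficiently fine partition).

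Second I would compute the ``defect''
\begin{equation*}
\delta\Xi_{s,u,t} := \Xi_{s,t} - \Xi_{s,u} - \Xi_{u,t} = \int_u^t \bigl(\alpha_s - \alpha_u\bigr)(x_r)\,dx_r,
\end{equation*}
using linearity and the fact that line integrals of closed forms concatenate additively along $x$. By hypothesis \eqref{condition one-form on manifold}, $|\delta\Xi_{s,u,t}| \leq |t-s|^\theta$, which is precisely the input needed for the sewing argument.

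Third I would perform the standard Young coarsening: for a partition $D = \{t_k\}_{k=0}^n$ of $[s,t]$ set $I(D) := \sum_k \Xi_{t_k,t_{k+1}}$, and repeatedly drop an interior point $t_k$ that minimizes $t_{k+1} - t_{k-1}$. By pigeonhole, when the partition has $m$ intervals there is an interior point with $t_{k+1} - t_{k-1} \leq 2(t-s)/(m-1)$, and removing it changes $I(D)$ by exactly $\delta\Xi_{t_{k-1},t_k,t_{k+1}}$, which is bounded by $(2(t-s)/(m-1))^\theta$. Summing over $m = 2,\ldots,n$ collapses $D$ to the trivial partition $\{s,t\}$ and yields
\begin{equation*}
\bigl|I(D) - \Xi_{s,t}\bigr| \leq (2(t-s))^\theta \sum_{k=1}^{n-1} k^{-\theta} \leq 2^\theta\zeta(\theta)\,(t-s)^\theta =: C_\theta(t-s)^\theta,
\end{equation*}
where convergence of $\sum k^{-\theta}$ uses $\theta > 1$. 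A Cauchy argument based on applying the same estimate on every subinterval of a partition of mesh $|D|$ (so that $|I(D') - I(D)| \leq C_\theta |D|^{\theta-1}(t-s)$ for any refinement $D'$ of $D$) shows that $I(D)$ converges as $|D|\to 0$, proving that the limit in \eqref{definition of integral of time-varying one-form on manifold} exists. Passing to the limit in the bound above, and noting that $\int_s^t (\alpha_r - \alpha_s)(x_r)\,dx_r = \lim_{|D|\to 0} I(D) - \Xi_{s,t}$, we obtain \eqref{local estimate one-form on manifold}.

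The only genuinely delicate point is the first step — making sense of $\int_s^t \alpha_s(x_r)\,dx_r$ for a closed one-form on $M$ against a merely continuous $x$ — but this is classical (local primitives and compactness of $x([s,t])$). Everything else is the routine sewing/Young machinery, with the crucial algebraic input being the identity $\delta\Xi_{s,u,t} = \int_u^t(\alpha_s-\alpha_u)(x_r)\,dx_r$, which is the closedness of each $\alpha_t$ in disguise.
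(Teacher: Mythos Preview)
Your proposal is correct and follows essentially the same approach as the paper: the paper's discussion after the lemma sketches exactly this argument (closed one-forms are integrable against continuous paths, the hypothesis controls the error when switching from $\alpha_s$ to $\alpha_u$, and one then ``sequentially remove[s] partition points'' \`a la Young), and the detailed proof of the more general Theorem~\ref{Theorem integrating slow varying cocyclic one forms} carries out the same point-removal and refinement-Cauchy steps you describe. Your treatment of the first step via local primitives is in fact slightly more careful than the paper's informal remark that the integral ``only depends on end points''.
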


For each $u\in \left[ 0,T\right] $, $\alpha _{u}$ is a closed one-form, so
the integral of $\alpha _{u}$ against any continuous path on $M$ is simple
and only depends on end points of the path. The compensated regularity
condition $\left( \ref{condition one-form on manifold}\right) $ between
paths $\alpha $ and $x$ guarantees that we can change from $\alpha _{s}$ to $%
\alpha _{u}$ at $x_{u}$ without a big error. Then, as in the case of Young
integral \cite{young1936inequality}, we can sequentially remove partition
points and the integral exists with the local estimate $\left( \ref{local
estimate one-form on manifold}\right) $. The idea is the same if we try to
integrate a slow-varying cocyclic one-form against a group-valued path,
because cocyclic one-forms are closed one-forms on a group.

Suppose $g$ is a continuous path on $\left[ 0,T\right] $ taking values in
group $\mathcal{G}$, and $\beta $ is a time-varying cocyclic one-form (or
say, a continuous path taking values in cocyclic one-forms on $\mathcal{G}$).

\begin{definition}[Integral]
\label{Definition of integral Definition}Let $g\in C\left( \left[ 0,T\right]
,\mathcal{G}\right) $ and $\beta \in C\left( \left[ 0,T\right] ,B\left( 
\mathcal{G},\mathcal{H}\right) \right) $. If the limit exists 
\begin{equation}
\lim_{\left\vert D\right\vert \rightarrow 0,D=\left\{ t_{k}\right\}
_{k=0}^{n}\subset \left[ 0,T\right] }\beta _{0}\left(
g_{0},g_{0,t_{1}}\right) \beta _{t_{1}}\left(
g_{t_{1}},g_{t_{1},t_{2}}\right) \cdots \beta _{t_{n-1}}\left(
g_{t_{n-1}},g_{t_{n-1},T}\right) \text{, with }g_{s,t}:=g_{s}^{-1}g_{t}\text{%
,}  \label{Definition of integral}
\end{equation}%
then we define the limit to be the integral $\int_{0}^{T}\beta _{u}\left(
g_{u}\right) dg_{u}$.
\end{definition}

For each $u\in \left[ 0,T\right] $, $\beta _{u}$ is a cocyclic one-form. The
integral of $\beta _{u}$ against any continuous path $g$ only depends on the
end points, and satisfies $\int_{s}^{t}\beta _{u}\left( g_{r}\right)
dg_{r}=\beta _{u}\left( g_{s},g_{s,t}\right) $, $\forall s<t$. Hence, $%
\left( \ref{Definition of integral}\right) $ can be rewritten as%
\begin{equation*}
\lim_{\left\vert D\right\vert \rightarrow 0,D=\left\{ t_{k}\right\}
_{k=0}^{n}\subset \left[ 0,T\right] }\tint\nolimits_{0}^{t_{1}}\beta
_{0}\left( g_{u}\right) dg_{u}\tint\nolimits_{t_{1}}^{t_{2}}\beta
_{t_{1}}\left( g_{u}\right) dg_{u}\cdots \tint\nolimits_{t_{n-1}}^{T}\beta
_{t_{n-1}}\left( g_{u}\right) dg_{u}\text{,}
\end{equation*}%
which is similar to $\left( \ref{definition of integral of time-varying
one-form on manifold}\right) $. The proof of the existence of the integral
is also similar. We will assume a generalized Young condition (Condition \ref%
{Condition integrable condition}), and get a local estimate in the form of $%
\left( \ref{local estimate one-form on manifold}\right) $ (Theorem \ref%
{Theorem integrating slow varying cocyclic one forms}). Since cocyclic
one-forms take values in another Banach algebra, if we integrate a
time-varying cocyclic one-form, we would need some graded structure of the
target Banach algebra for the integral to make sense (specified in Section %
\ref{Section Algebraic formulation}, see \cite{feyel2008non} for a general
condition for the limit to exist) and these assumptions set out the basis on
which dominated paths are defined. In particular, we assume that the Banach
algebra coincides with $%
\mathbb{R}
\oplus \mathcal{V}\oplus \cdots \oplus \mathcal{V}^{\otimes n}$ as a Banach
space and that the multiplication in the Banach algebra is induced by the
comultiplication of a family of graded projective mappings. Since a Banach
space can be canonically embedded in a graded Banach algebra, our
formulation includes Banach-space valued one-forms as special cases (e.g.
dominated paths as in Definition \ref{Definition dominated path}). For
polynomial cocyclic one-forms, we can vary it with time to incorporate
Lipschitz one-forms as in \cite{lyons1998differential} (Corollary \ref%
{Example rough integral}) and also\ incorporate time-varying Lipschitz
one-forms (Remark \ref{Remark integrating time-varying Lipschitz functions}%
). In particular, the rough integral $\int_{u\in \left[ s,t\right] }\alpha
\left( X_{u},u\right) dX_{u}$ with an inhomogeneous degree of smoothness
assumption on $\alpha $ can be treated as a special example of integrating
time-varying Lipschitz one-forms (Remark \ref{Remark rough integral with
inhomogeneous degree}).

In proving the existence of the integral and in defining the dominated
paths, we would need to compare cocyclic one-forms. Suppose that we want to
switch from one one-form to another at a point (say $a$). Since they are
cocyclic, if they are close at $a$, then they will be close on the whole
group pointwisely (by which we mean that if a sequence of cocyclic one-forms
converge at a specific point then they converge on the whole group
pointwisely). However, if we want to identify a fairly sharp regularity
condition on the one-forms to integrate against a given group-valued path,
then it is reasonable to compare these two one-forms only around $a$,
because the difference between two one-forms will propagate based on the
structure of the group. Take the polynomial one-form as an example. Suppose $%
p$ and $q$ are two degree $\left( n-1\right) $ polynomial one-forms, which
are close at $0$. Then for $l=0,1,\dots ,n-1$,%
\begin{equation}
\left( D^{l}p\right) \left( z\right) -\left( D^{l}q\right) \left( z\right)
=\sum_{k=0}^{n-1-l}\left( \left( D^{l+k}p\right) \left( 0\right) -\left(
D^{l+k}q\right) \left( 0\right) \right) \frac{z^{\otimes k}}{k!}\text{, \ }%
\forall z\in \mathcal{V}\text{.}
\label{polynomial one form inhomogeneous translocation}
\end{equation}%
For $z\in \mathcal{V}$ satisfying $\delta \leq \left\Vert z\right\Vert \leq
\delta ^{-1}$ for some $\delta \in \left( 0,1\right) $, based on the
expression $\left( \ref{polynomial one form inhomogeneous translocation}%
\right) $, we have the estimate $|\hspace{-0.01in}|(D^{l}p)\left( z\right)
-(D^{l}q)\left( z\right) |\hspace{-0.01in}|\lesssim \max_{k=0}^{n-1-l}|%
\hspace{-0.01in}|(D^{l+k}p)\left( 0\right) -(D^{l+k}q)\left( 0\right) |%
\hspace{-0.01in}|$ (rather than $|\hspace{-0.01in}|(D^{l}p)\left( z\right)
-(D^{l}q)\left( z\right) |\hspace{-0.01in}|\lesssim |\hspace{-0.01in}%
|(D^{l}p)\left( 0\right) -(D^{l}q)\left( 0\right) |\hspace{-0.01in}|$). In
the theory of rough paths we use an inhomogeneous distance between two
one-forms to compensate the inhomogeneous norm on the group where the paths
live. Based on $\left( \ref{polynomial one form inhomogeneous translocation}%
\right) $, this inhomogeneous distance will not be preserved if we compare
these two one-forms at a point that is far from our reference point. Hence,
we compare these two one-forms around $a$ as two continuous linear operators
on a graded Banach-space, equipped with an inhomogeneous norm. In fact, the
definition of the norm (homogeneous or inhomogeneous) is not essential in
the construction of integral. As long as the two paths --- one takes values
in one-forms and the other takes values in the group --- satisfy a
compensated regularity condition (Condition \ref{Condition integrable
condition}), the integral is well-defined and is reduced to an analogue of
the classical Young integral.

We introduce cocyclic one-forms as a family of closed one-forms on a
topological group, and recast the integration in the theory of rough paths
as an example of integrating a time-varying cocyclic one-form against a
group-valued path. Under a compensated regularity condition between the
one-form and the path, the integral exists, and the indefinite integral
obtained is another group-valued path. The integral recovers and extends the
theories of integration for geometric rough paths and for branched rough
paths \cite{lyons1998differential, gubinelli2004controlling,
gubinelli2010ramification}. As an application, we prove the extension
theorem that there exists a unique extension of a given group-valued path of
finite $p$-variation, and we represent the extended path as the integral of
a time-varying cocyclic one-form against the original group-valued path.

For a group-valued path, we define dominated paths as a family of
Banach-space valued paths that can be represented as integrals of
time-varying cocyclic one-forms against the given group-valued path. Under
some structural assumptions on the group, the set of dominated paths is both
a linear space and an algebra, has a canonical enhancement to a group-valued
path, is stable under composition with regular functions and satisfies a
transitive property. There are minor differences between dominated paths and
controlled paths as defined in \cite{gubinelli2004controlling,
gubinelli2010ramification}. Some discussion about their relationship can be
found in Section \ref{Subsection definition of dominated path}. In
particular, dominated paths are defined from and determined by one-forms,
and problems about dominated paths can be reformulated in terms of
one-forms. Working with one-forms has the benefit that basic operations
(iterated integration, multiplication, composition, transitivity) are
continuous in the space of one-forms (Section \ref{Section stableness of
dominated paths}). For example, the solution to a rough differential
equation can equally be formulated as a fixed point problem in the space of
integrable one-forms. The enhancement into a group-valued path is a
continuous operation in the space of one-forms (based on $\left( \ref%
{continuity of one-forms iterated integral}\right) \left( \ref{continuity of
one-forms in algebra}\right) $) and integration is a continuous operation
from one-forms to paths (based on $\left( \ref{Lipschitz continuity of the
integral in terms of one-forms}\right) $), so if the one-forms associated
with a sequence of dominated paths converge then their group-valued
enhancements also converge. In \cite{lyons2015theory}, we give an accessible
overview of the one-form approach developed here. As an application, we
extend an argument of Schwartz \cite{schwartz1989convergence} to rough
differential equations, and give a short proof of the global unique
existence and continuity of the solution by using one-forms.

\section{Construction of the integral}

Recall in Definition \ref{Definition of cocyclic one-form} that $\beta \in
C\left( \mathcal{G},L\left( \mathcal{A},\mathcal{B}\right) \right) $ is
called a cocyclic one-form if there exists a topological group $\mathcal{H}$
in $\mathcal{B}$ such that $\beta \left( a,b\right) \in \mathcal{H}$, $%
\forall a,b\in \mathcal{G}$, and 
\begin{equation}
\beta \left( a,bc\right) =\beta \left( a,b\right) \beta \left( ab,c\right) 
\text{, }\forall a,b,c\in \mathcal{G}\text{.}  \label{cocyclic property}
\end{equation}%
Based on $\left( \ref{cocyclic property}\right) $, we have $\beta \left(
a,1_{\mathcal{G}}\right) =1_{\mathcal{H}}$, $\forall a\in \mathcal{G}$, and $%
\beta \left( a,b\right) ^{-1}=\beta \left( ab,b^{-1}\right) $, $\forall
a,b\in \mathcal{G}$. Moreover, the one-form at $a\in \mathcal{G}$ only
depends on the one-form at $1_{\mathcal{G}}$ and the structure of the group:%
\begin{equation*}
\beta \left( a,b\right) =\beta \left( 1_{\mathcal{G}},a\right) ^{-1}\beta
\left( 1_{\mathcal{G}},ab\right) \text{, }\forall a,b\in \mathcal{G}\text{.}
\end{equation*}%
The cocyclic one-form is a purely algebraic object; and the topology is
coming in when we want to vary it with time.

\begin{proposition}
\label{Proposition equivalent definition of cocyclic one-form}Let $\beta \in
C\left( \mathcal{G},L\left( \mathcal{A},\mathcal{B}\right) \right) $. Then $%
\beta $ is a cocyclic one-form if and only if there exist a topological
group $\mathcal{H}$ in $\mathcal{B}$ and $\alpha \in L\left( \mathcal{A},%
\mathcal{B}\right) $ satisfying $\alpha \left( \mathcal{G}\right) \subseteq 
\mathcal{H}$, such that%
\begin{equation*}
\beta \left( a,b\right) =\alpha \left( a\right) ^{-1}\alpha \left( ab\right) 
\text{, }\forall a,b\in \mathcal{G}\text{.}
\end{equation*}
\end{proposition}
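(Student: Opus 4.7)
The plan is to take $\alpha := \beta(1_{\mathcal{G}})$ for the forward implication, and conversely to set $\beta(a,b) := \alpha(a)^{-1}\alpha(ab)$ and verify the three defining properties of a cocyclic one-form. The proposition essentially says that every continuous $\mathcal{H}$-valued cocycle on $\mathcal{G}$ is a coboundary; once one disentangles the algebraic and topological content, only a continuity check requires real work.

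For the $(\Rightarrow)$ direction, assume $\beta$ is cocyclic with target topological group $\mathcal{H}$. I set $\alpha := \beta(1_{\mathcal{G}})$, which lies in $L(\mathcal{A},\mathcal{B})$ since $\beta\in C(\mathcal{G},L(\mathcal{A},\mathcal{B}))$. The containment $\alpha(\mathcal{G})\subseteq \mathcal{H}$ is immediate from $\beta(1_{\mathcal{G}},a)\in\mathcal{H}$ for $a\in \mathcal{G}$. Specialising the cocycle identity $(\ref{definition of cocycle})$ to $(1_{\mathcal{G}},a,b)$ yields $\beta(1_{\mathcal{G}},a)\beta(a,b)=\beta(1_{\mathcal{G}},ab)$; since $\alpha(a)=\beta(1_{\mathcal{G}},a)$ is invertible in $\mathcal{H}$, left-multiplying by $\alpha(a)^{-1}$ produces the desired formula $\beta(a,b)=\alpha(a)^{-1}\alpha(ab)$ for all $a,b\in\mathcal{G}$.

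For the $(\Leftarrow)$ direction, starting from such an $\alpha$, I would define $\beta(a,b):=\alpha(a)^{-1}\alpha(ab)$ and check in turn: linearity of $\beta(a)$ as a map $\mathcal{A}\to \mathcal{B}$ (from bilinearity of algebra multiplication in $\mathcal{A}$ and linearity of $\alpha$, followed by left multiplication by the fixed element $\alpha(a)^{-1}\in\mathcal{B}$); membership $\beta(a,b)\in\mathcal{H}$ for $a,b\in\mathcal{G}$ (since $\mathcal{H}$ is a group containing both $\alpha(a)$ and $\alpha(ab)$); and the cocycle identity by the telescoping calculation
\[
\beta(a,b)\beta(ab,c)=\alpha(a)^{-1}\alpha(ab)\alpha(ab)^{-1}\alpha(abc)=\alpha(a)^{-1}\alpha(abc)=\beta(a,bc).
\]

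The only genuinely non-formal step, and thus the main (albeit modest) obstacle, is verifying that $a\mapsto \beta(a)$ is continuous from $\mathcal{G}$ into $L(\mathcal{A},\mathcal{B})$ in operator norm. I would handle this by writing, for $b\in\mathcal{A}$,
\[
\beta(a',b)-\beta(a,b)=\big(\alpha(a')^{-1}-\alpha(a)^{-1}\big)\alpha(a'b)+\alpha(a)^{-1}\big(\alpha(a'b)-\alpha(ab)\big),
\]
bounding the second summand uniformly on $\{\|b\|_{\mathcal{A}}\le 1\}$ via the estimate $\|\alpha(a'b)-\alpha(ab)\|_{\mathcal{B}}\le \|\alpha\|_{\mathrm{op}}\|a'-a\|_{\mathcal{A}}\|b\|_{\mathcal{A}}$, and sending the first summand to zero in $\mathcal{B}$-norm via continuity of inversion in the topological group $\mathcal{H}$ (equipped with its subspace topology from $\mathcal{B}$). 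Submultiplicativity of the Banach-algebra norm on $\mathcal{B}$ then converts both estimates into the desired operator-norm convergence $\|\beta(a')-\beta(a)\|_{\mathrm{op}}\to 0$ as $a'\to a$.
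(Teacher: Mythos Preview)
Your $(\Rightarrow)$ argument is exactly the paper's: set $\alpha:=\beta(1_{\mathcal G})$ and read off the formula from the cocycle identity at $(1_{\mathcal G},a,b)$.

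For $(\Leftarrow)$ you do more than is asked. The proposition begins with the standing hypothesis $\beta\in C(\mathcal G,L(\mathcal A,\mathcal B))$, so linearity of each $\beta(a)$ and continuity of $a\mapsto\beta(a)$ are already given; you only need to verify that $\beta(a,b)\in\mathcal H$ for $a,b\in\mathcal G$ and that the cocycle identity holds. Both follow immediately from the representation $\beta(a,b)=\alpha(a)^{-1}\alpha(ab)$ by your telescoping computation, which is why the paper disposes of this direction with ``$\Leftarrow$ is clear.'' Your continuity paragraph is correct but unnecessary here; it would be relevant only if the proposition were phrased as a construction of $\beta$ from $\alpha$ rather than a characterisation of a pre-given $\beta$.
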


\begin{proof}
$\Leftarrow $ is clear. For $\Rightarrow $, set $\alpha \left( b\right)
:=\beta \left( 1_{\mathcal{G}},b\right) $, $\forall b\in \mathcal{G}$. Then $%
\beta \left( a,b\right) =\beta \left( 1_{\mathcal{G}},a\right) ^{-1}\beta
\left( 1_{\mathcal{G}},ab\right) =\alpha \left( a\right) ^{-1}\alpha \left(
ab\right) $, $\forall a,b\in \mathcal{G}$.
\end{proof}

Proposition \ref{Proposition equivalent definition of cocyclic one-form} is
simple, but is useful for constructing a cocyclic one-form.

Recall the definition of the integral in Definition \ref{Definition of
integral Definition}, i.e. for $\beta \in C\left( \left[ 0,T\right] ,B\left( 
\mathcal{G}\right) \right) $ and $g\in C\left( \left[ 0,T\right] ,\mathcal{G}%
\right) $,%
\begin{equation}
\int_{s}^{t}\beta _{u}\left( g_{u}\right) dg_{u}:=\lim_{\left\vert
D\right\vert \rightarrow 0,D=\left\{ t_{k}\right\} _{k=0}^{n}\subset \left[
s,t\right] }\beta _{0}\left( g_{0},g_{0,t_{1}}\right) \beta _{t_{1}}\left(
g_{t_{1}},g_{t_{1},t_{2}}\right) \cdots \beta _{t_{n-1}}\left(
g_{t_{n-1}},g_{t_{n-1},t}\right) \text{, }\forall s<t\text{,}
\label{integral}
\end{equation}%
provided the limit exists. When $\beta \in C\left( \left[ 0,T\right]
,B\left( \mathcal{G}\right) \right) $ is a constant cocyclic one-form, i.e. $%
\beta _{t}\equiv \beta _{0}\in B\left( \mathcal{G}\right) $, we know how to
integrate $\beta $ against $g\in C\left( \left[ 0,T\right] ,\mathcal{G}%
\right) $, because based on the cocyclic property in $\left( \ref{cocyclic
property}\right) $ and the definition of integral in $\left( \ref{integral}%
\right) $,%
\begin{equation*}
\tint\nolimits_{s}^{t}\beta _{u}\left( g_{u}\right)
dg_{u}=\tint\nolimits_{s}^{t}\beta _{0}\left( g_{u}\right) dg_{u}=\beta
_{0}\left( g_{s},g_{s,t}\right) ,\forall s<t\text{.}
\end{equation*}%
Then, when $t\mapsto \beta _{t}$ varies slowly (to be quantified), the
integral of $\beta $ against $g$ should still exist. In that case, the
behavior of $\beta _{t}$ will depend on the behavior of $g_{t}$. In
particular, if $g_{t}\equiv g_{0}$, then for any $\beta :\left[ 0,T\right]
\rightarrow B\left( \mathcal{G},\mathcal{H}\right) $, we have $%
\int_{0}^{T}\beta _{u}\left( g_{u}\right) dg_{u}=1_{\mathcal{H}}$ (based on
the definition of integral and using that $\beta _{s}\left( a,1_{\mathcal{G}%
}\right) =1_{\mathcal{H}}$, $\forall a\in \mathcal{G}$, $\forall s$). More
generally, when $\beta $ and $g$ have compensated regularities, the integral 
$\int \beta \left( g\right) dg$ exists. We give a sufficient condition in
Condition \ref{Condition integrable condition}.

\subsection{Algebraic assumptions}

\label{Section Algebraic formulation}

The algebraic structure formulated in this section will be assumed
throughout the paper. Our structure is similar to that used in \cite%
{lyons1998differential, lyons2002system, lyons2007differential,
gubinelli2004controlling, gubinelli2010ramification}.

Following Def 1.25 \cite{lyons2007differential}, we equip the tensor powers
of $\mathcal{V}$ with admissible norms.

\begin{definition}[Admissible Norms]
\label{Definition admissible norms}Let $\mathcal{V}$ be a Banach space. We
say the tensor powers of $\mathcal{V}$ are equipped with admissible norms,
if the following conditions hold ($Sym\left( k\right) $ denotes the
symmetric group of degree $k$) 
\begin{eqnarray*}
\left\Vert \varrho v\right\Vert &=&\left\Vert v\right\Vert \text{, }\forall
v\in \mathcal{V}^{\otimes k}\text{, }\forall \varrho \in Sym\left( k\right) 
\text{, } \\
\left\Vert u\otimes v\right\Vert &=&\left\Vert u\right\Vert \left\Vert
v\right\Vert \text{, }\forall u\in \mathcal{V}^{\otimes k}\text{, }\forall
v\in \mathcal{V}^{\otimes j}\text{.}
\end{eqnarray*}
\end{definition}

\begin{notation}[Triple $(T^{\left( n\right) }\left( \mathcal{V}\right) ,%
\mathcal{G}_{n},\mathcal{P}_{n})$]
\label{Notation triple} \ \ \ 

$\left( 1\right) $ We assume that $T^{\left( n\right) }\left( \mathcal{V}%
\right) $ is a unital associative Banach algebra, which, as a Banach space,
coincides with $\mathcal{%
\mathbb{R}
\oplus V\oplus \cdots \oplus V}^{\otimes n}$ with the norm ($\pi _{k}$
denotes the projection to $\mathcal{V}^{\otimes k}$)%
\begin{equation*}
\left\Vert v\right\Vert :=\tsum\nolimits_{k=0}^{n}\left\Vert \pi _{k}\left(
v\right) \right\Vert \text{, }\forall v\in T^{\left( n\right) }\left( 
\mathcal{V}\right) \text{.}
\end{equation*}

$\left( 2\right) $ The multiplication on $T^{\left( n\right) }\left( 
\mathcal{V}\right) $ is induced by the comultiplication on a finite set of
graded projective mappings: 
\begin{equation*}
\mathcal{P}_{n}\mathcal{=}\left\{ \sigma |\sigma \in L\left( T^{\left(
n\right) }\left( \mathcal{V}\right) ,\mathcal{V}^{\otimes \left\vert \sigma
\right\vert }\right) ,\left\vert \sigma \right\vert =0,1,\dots ,n\right\} 
\text{.}
\end{equation*}%
More specifically,

$\left( 2.a\right) $ If we denote by $\sigma _{0}$ the projection of $%
T^{\left( n\right) }\left( \mathcal{V}\right) $ to $\mathcal{%
\mathbb{R}
}$, then $\sigma _{0}\in \mathcal{P}_{n}$.

$\left( 2.b\right) $ Each $\sigma \in \mathcal{P}_{n}$ is a continuous
linear mapping satisfying $\sigma \circ \sigma =\sigma $, and $a=\mathcal{%
\sum }_{\sigma \in \mathcal{P}_{n}}\sigma \left( a\right) $ for any $a\in
T^{\left( n\right) }\left( \mathcal{V}\right) $.

$\left( 2.c\right) $ Let $\bigtriangleup :\mathcal{P}_{n}\rightarrow 
\mathcal{P}_{n}\otimes \mathcal{P}_{n}$ denote the comultiplication on $%
\mathcal{P}_{n}$. Then 
\begin{equation*}
\bigtriangleup \sigma _{0}=\sigma _{0}\otimes \sigma _{0}\text{ .}
\end{equation*}%
For any $\sigma \in \mathcal{P}_{n}$, $\left\vert \sigma \right\vert \geq 1$%
, there exist an integer $N\left( \sigma \right) $ and $\left\{ \sigma
^{j,i}|j=1,2,i=1,\dots ,N\left( \sigma \right) \right\} \subseteq \mathcal{P}%
_{n}$, $\left\vert \sigma ^{j,i}\right\vert \geq 1$, $\left\vert \sigma
^{1,i}\right\vert +\left\vert \sigma ^{2,i}\right\vert =\left\vert \sigma
\right\vert $, $\forall i$, such that 
\begin{equation}
\bigtriangleup \sigma =\sigma \otimes \sigma _{0}+\sigma _{0}\otimes \sigma
+\tsum\nolimits_{i=1}^{N\left( \sigma \right) }\sigma ^{1,i}\otimes \sigma
^{2,i}\text{ .}  \label{comultiplication on projection mappings}
\end{equation}%
When $\left\vert \sigma \right\vert =1$, we set $N\left( \sigma \right) =0$
and $\bigtriangleup \sigma =\sigma \otimes \sigma _{0}+\sigma _{0}\otimes
\sigma $.

$\left( 2.d\right) $ The multiplication on $T^{\left( n\right) }\left( 
\mathcal{V}\right) $ is induced by the comultiplication on $\mathcal{P}_{n}$%
, i.e. for any $a,b\in T^{\left( n\right) }\left( \mathcal{V}\right) $, 
\begin{gather}
\sigma _{0}\left( ab\right) =\sigma _{0}\left( a\right) \sigma _{0}\left(
b\right) \text{,}  \notag \\
\sigma \left( ab\right) =\sigma \left( a\right) \sigma _{0}\left( b\right)
+\sigma _{0}\left( a\right) \sigma \left( b\right)
+\tsum\nolimits_{i=1}^{N\left( \sigma \right) }\sigma ^{1,i}\left( a\right)
\otimes \sigma ^{2,i}\left( b\right) \text{, }\sigma \in \mathcal{P}_{n}%
\text{, }\left\vert \sigma \right\vert \geq 1\text{.}
\label{multiplication is induced by comultiplication}
\end{gather}

$\left( 3\right) $ $\mathcal{G}_{n}$ is a closed connected topological group
in $T^{\left( n\right) }\left( \mathcal{V}\right) $ satisfying $\sigma
_{0}\left( a\right) =1$ for any $a\in \mathcal{G}_{n}$.
\end{notation}

We assume that $\mathcal{G}_{n}$ and $T^{\left( n\right) }\left( \mathcal{V}%
\right) $ have consistent unit, multiplication and topology, but $\mathcal{G}%
_{n}$ may be equipped with a different norm.

\begin{notation}[Consistent Triples]
\label{Notation consistent triples}We say that $\{(T^{\left( n\right)
}\left( \mathcal{V}\right) ,\mathcal{G}_{n},\mathcal{P}_{n})\}_{n=0}^{\infty
}$ is a consistent family, if

$\left( 1\right) $ Each $(T^{\left( n\right) }\left( \mathcal{V}\right) ,%
\mathcal{G}_{n},\mathcal{P}_{n})$ is a triple as in Notation \ref{Notation
triple}.

$\left( 2\right) $ For $m\geq n\geq 1$, $\mathcal{P}_{n}=\left\{ \sigma
|\sigma \in \mathcal{P}_{m},\left\vert \sigma \right\vert \leq n\right\} $,
and the mapping $1_{n}:T^{\left( m\right) }\left( \mathcal{V}\right)
\rightarrow T^{\left( n\right) }\left( \mathcal{V}\right) $ defined by $%
1_{n}\left( a\right) =\sum_{\sigma \in \mathcal{P}_{n}}\sigma \left(
a\right) $, $\forall a\in T^{\left( m\right) }\left( \mathcal{V}\right) $,
is an algebra homomorphism, and induces a group homomorphism from $\mathcal{G%
}_{m}$ to $\mathcal{G}_{n}$ satisfying $1_{n}\left( \mathcal{G}_{m}\right) =%
\mathcal{G}_{n}$.
\end{notation}

With $N\left( \sigma \right) $ in $\left( \ref{comultiplication on
projection mappings}\right) $, we denote the the integer%
\begin{equation}
N\left( n\right) :=\left( \#\mathcal{P}_{n}\right) \vee \left(
\max\nolimits_{\sigma \in \mathcal{P}_{n}}N\left( \sigma \right) \right) 
\text{.}  \label{Bounded by N(m)}
\end{equation}

The nilpotent Lie group and Butcher group are the two main examples in this
paper.

\begin{example}[Nilpotent Lie Group]
Let $\mathcal{G}_{n}$ be the step-$n$ nilpotent Lie group over Banach space $%
\mathcal{V}$. Then $\mathcal{P}_{n}\mathcal{\ }$is the set of projective
mappings $\left\{ \pi _{k}\right\} _{k=0}^{n}$ with $\pi _{k}$ denoting the
projection of $T^{\left( n\right) }\left( \mathcal{V}\right) $ to $\mathcal{V%
}^{\otimes k}$ and $\bigtriangleup \pi _{k}=\sum_{j=0}^{k}\pi _{j}\otimes
\pi _{k-j}$, $k=0,1,\dots ,n$. In this case, $a\in \mathcal{G}_{n}$ if and
only if%
\begin{equation*}
\log _{n}\left( a\right) :=\tsum\nolimits_{k=1}^{n}\left( -1\right) ^{k+1}%
\frac{1}{k}\left( a-1\right) ^{k}\in \mathcal{V}\oplus \lbrack \mathcal{V},%
\mathcal{V]}\oplus \lbrack \mathcal{V},\left[ \mathcal{V},\mathcal{V}\right] 
\mathcal{]}\oplus \cdots \oplus \lbrack \mathcal{V},\cdots \lbrack \mathcal{V%
},\mathcal{V}\overset{n}{\overbrace{]\cdots ]}},
\end{equation*}%
where the multiplication in $\left( a-1\right) ^{k}$ is in $T^{\left(
n\right) }\left( \mathcal{V}\right) $ and $[\mathcal{V},\cdots \lbrack 
\mathcal{V},\mathcal{V}\overset{k}{\overbrace{]\cdots ]}}$ is the subspace
of $\mathcal{V}^{\otimes k}$ spanned by $\left[ v_{1},\cdots ,\left[
v_{k-1},v_{k}\right] \right] $, $v_{i}\in \mathcal{V}$, $i=1,\dots ,k\,$,$\ $%
with $\left[ u,v\right] :=u\otimes v-v\otimes u$.
\end{example}

Based on a classical theorem in free Lie algebras (Theorem 3.2 \cite%
{reutenauer2003free}), the nilpotent Lie group can be equivalently
characterized by using the shuffle product (as exploited in \cite%
{lyons2007differential}).

\begin{example}[Butcher Group]
Let $\mathcal{G}_{n}$ be the Butcher group over $\mathcal{%
\mathbb{R}
}^{d}$. Then $\mathcal{P}_{n}=\left\{ \sigma |\left\vert \sigma \right\vert
\leq n\right\} $ is the set of labelled forests of degree less or equal to $%
n $, and $\bigtriangleup $ is the comultiplication in Connes-Kreimer Hopf
algebra. In particular, for a labelled tree $\tau $, 
\begin{equation*}
\bigtriangleup \tau =1\otimes \tau +\tau \otimes
1+\tsum\nolimits_{c}P^{c}\left( \tau \right) \otimes R^{c}\left( \tau
\right) \text{,}
\end{equation*}%
where the sum is over all non-trivial admissible cuts of $\tau $. For a
labelled forest $\tau _{1}\tau _{2}\cdots \tau _{n}$, where $\tau _{i}$ are
labelled trees, we have 
\begin{equation*}
\bigtriangleup \left( \tau _{1}\tau _{2}\cdots \tau _{n}\right)
=\bigtriangleup \tau _{1}\bigtriangleup \tau _{2}\cdots \bigtriangleup \tau
_{n}\text{.}
\end{equation*}%
In this case, $a\in \mathcal{G}_{n}$ if and only if%
\begin{equation}
\left( \sigma _{1}\sigma _{2}\right) \left( a\right) =\sigma _{1}\left(
a\right) \sigma _{2}\left( a\right) \text{, }\forall \sigma _{i}\in \mathcal{%
P}_{n}\text{, }\left\vert \sigma _{1}\right\vert +\left\vert \sigma
_{2}\right\vert \leq n\text{.}
\label{relationship satisfied by elements in Butcher group}
\end{equation}%
For more detailed explanations and concrete examples, please refer to \cite%
{connes1998hopf} and \cite{gubinelli2010ramification}.
\end{example}

When $\mathcal{G}_{n}$ is the Butcher group, for $a\in \mathcal{G}_{n}$ and $%
\sigma \in \mathcal{P}_{n}$, instead of $\sigma \left( a\right) \in (%
\mathcal{%
\mathbb{R}
}^{d})^{\otimes \left\vert \sigma \right\vert }$, we assume $\sigma \left(
a\right) \in 
\mathbb{R}
$, $\sigma \left( a\right) e_{\sigma }\in (\mathcal{%
\mathbb{R}
}^{d})^{\otimes \left\vert \sigma \right\vert }$ and $a=\sum_{\sigma \in 
\mathcal{P}_{n}}\sigma \left( a\right) e_{\sigma }$ (with $e_{\sigma }$
denotes the tensor coordinate corresponding to $\sigma $). In this case, all
the components in $\sigma \left( ab\right) =\sigma \left( a\right) \sigma
_{0}\left( b\right) +\sigma _{0}\left( a\right) \sigma \left( b\right)
+\tsum\nolimits_{i=1}^{N\left( \sigma \right) }\sigma ^{1,i}\left( a\right)
\sigma ^{2,i}\left( b\right) $ and $\left( \sigma _{1}\sigma _{2}\right)
\left( a\right) =\sigma _{1}\left( a\right) \sigma _{2}\left( a\right) $ are
real numbers.

\subsection{Existence of the integral}

\label{Subsection existence of integral}

\begin{definition}[Control]
The function $\omega :0\leq s\leq t\leq T\rightarrow \overline{%
\mathbb{R}
^{+}}$ is called a control if $\omega $ is continuous, vanishes on the
diagonal $\{(s,t)|0\leq s=t\leq T\}$ and is super-additive: $\omega \left(
s,u\right) +\omega \left( u,t\right) \leq \omega \left( s,t\right) $, $%
\forall 0\leq s\leq u\leq t\leq T$.
\end{definition}

Recall the triple $\left( T^{\left( n\right) }\left( \mathcal{V}\right) ,%
\mathcal{G}_{n},\mathcal{P}_{n}\right) $ in Notation \ref{Notation triple}
with the structural constant $N\left( n\right) $ defined in $\left( \ref%
{Bounded by N(m)}\right) $. Assume $\mathcal{B}$ is a Banach algebra and $%
\mathcal{H}$ is a topological group in $\mathcal{B}$. (We do not assume that 
$\mathcal{B}$ and $\mathcal{H}$ satisfy the conditions in Section \ref%
{Section Algebraic formulation}.) Recall the notation of cocyclic one-forms $%
B\left( \mathcal{H},\mathcal{G}_{n}\right) $ as in Definition \ref%
{Definition of cocyclic one-form}. Based on Definition \ref{Definition of
integral Definition}, the integral of $\beta \in C\left( \left[ 0,T\right]
,B\left( \mathcal{H},\mathcal{G}_{n}\right) \right) $ against $g\in C\left( %
\left[ 0,T\right] ,\mathcal{H}\right) $ is defined by%
\begin{equation*}
\tint\nolimits_{0}^{T}\beta _{u}\left( g_{u}\right) dg_{u}:=\lim_{\left\vert
D\right\vert \rightarrow 0,D=\left\{ t_{k}\right\} _{k=0}^{n}\subset \left[
0,T\right] }\beta _{0}\left( g_{0},g_{0,t_{1}}\right) \beta _{t_{1}}\left(
g_{t_{1}},g_{t_{1},t_{2}}\right) \cdots \beta _{t_{n-1}}\left(
g_{t_{n-1}},g_{t_{n-1},T}\right) \text{,}
\end{equation*}%
provided the limit exists.

\begin{condition}[Integrable Condition]
\label{Condition integrable condition}Let $g\in C\left( \left[ 0,T\right] ,%
\mathcal{H}\right) $ and $\beta \in C\left( \left[ 0,T\right] ,B\left( 
\mathcal{H},\mathcal{G}_{n}\right) \right) $. Then $g$ and $\beta $ are said
to satisfy the integrable condition, if there exist $M>0$, control $\omega $
and $\theta >1$ such that 
\begin{gather}
\max_{\sigma \in \mathcal{P}_{n}}\sup_{0\leq s<t\leq T}\left\Vert \sigma
\left( \beta _{s}\left( g_{s},g_{s,t}\right) \right) \right\Vert \leq M\text{%
,}  \label{condition uniformly bounded linear operator} \\
\max_{\sigma \in \mathcal{P}_{n}}\left\Vert \sigma \left( \left( \beta
_{u}-\beta _{s}\right) \left( g_{u},g_{u,t}\right) \right) \right\Vert \leq
\omega \left( s,t\right) ^{\theta }\text{, }\forall 0\leq s<u<t\leq T\text{.}
\label{condition regularity of time-varying constant one-form}
\end{gather}
\end{condition}

The proof of Theorem \ref{Theorem integrating slow varying cocyclic one
forms} is in the spirit of Young \cite{young1936inequality} and Lyons \cite%
{lyons1998differential}. In \cite{feyel2008non} the authors proved the
existence of a unique multiplicative function for every almost
multiplicative function taking values in an associative monoide using dyadic
partitions, giving an estimate that is a generalization of $\left( \ref%
{error between integral and approximation}\right) $.

\begin{theorem}[Existence of Integral]
\label{Theorem integrating slow varying cocyclic one forms}Suppose $g\in
C\left( \left[ 0,T\right] ,\mathcal{H}\right) $ and $\beta \in C\left( \left[
0,T\right] ,B\left( \mathcal{H},\mathcal{G}_{n}\right) \right) $ satisfy
Condition \ref{Condition integrable condition}. Then $\int_{0}^{\cdot }\beta
_{u}\left( g_{u}\right) dg_{u}\in C\left( \left[ 0,T\right] ,\mathcal{G}%
_{n}\right) $ exists, and there exists a constant $C_{n,\theta ,M,\omega
\left( 0,T\right) }$, such that%
\begin{equation}
\max_{\sigma \in \mathcal{P}_{n}}\left\Vert \sigma \left( \int_{s}^{t}\beta
_{u}\left( g_{u}\right) dg_{u}\right) -\sigma \left( \beta _{s}\left(
g_{s},g_{s,t}\right) \right) \right\Vert \leq C_{n,\theta ,M,\omega \left(
0,T\right) }\omega \left( s,t\right) ^{\theta }\text{, }\forall 0\leq
s<t\leq T\text{.}  \label{error between integral and approximation}
\end{equation}
\end{theorem}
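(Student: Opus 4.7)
The approach is Young--Lyons sewing adapted to the group-valued setting. Set $\mu_{s,t} := \beta_s(g_s, g_{s,t}) \in \mathcal{G}_n$ and let $I(D) := \mu_{t_0,t_1}\mu_{t_1,t_2}\cdots\mu_{t_{N-1},t_N}$ denote the Riemann product over a partition $D = \{s = t_0 < \cdots < t_N = t\}$ of $[s,t]$. Applying the cocycle identity twice, together with $g_s g_{s,u} = g_u$ and $g_{s,u} g_{u,t} = g_{s,t}$, yields the \emph{almost multiplicativity} identity
\[
  \mu_{s,t} - \mu_{s,u}\mu_{u,t} = \beta_s(g_s, g_{s,u})\,\bigl[\beta_s(g_u, g_{u,t}) - \beta_u(g_u, g_{u,t})\bigr], \qquad s < u < t.
\]
Applying an arbitrary $\sigma \in \mathcal{P}_n$, expanding the product via the comultiplication formula in Notation \ref{Notation triple}(2.d), and bounding each factor by either the uniform bound $M$ or the compensated regularity $\omega(s,t)^\theta$ from Condition \ref{Condition integrable condition} gives
\[
  \max_{\sigma \in \mathcal{P}_n} \|\sigma(\mu_{s,t} - \mu_{s,u}\mu_{u,t})\| \le C_{n,M}\,\omega(s,t)^\theta.
\]

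By super-additivity of $\omega$, whenever $D$ has $N \ge 2$ interior subintervals an odd/even pigeonhole produces an index $k$ with $\omega(t_{k-1}, t_{k+1}) \le 2\omega(s,t)/(N-1)$. Removing $t_k$ from $D$ replaces the factor $\mu_{t_{k-1}, t_k}\mu_{t_k, t_{k+1}}$ in $I(D)$ by $\mu_{t_{k-1}, t_{k+1}}$, so combining the almost-multiplicativity estimate with the uniform bound on the surrounding factors yields
\[
  \max_{\sigma \in \mathcal{P}_n}\bigl\|\sigma\bigl(I(D) - I(D\setminus\{t_k\})\bigr)\bigr\| \le C'_{n,M}\bigl(2\omega(s,t)/(N-1)\bigr)^\theta.
\]
Iterating the point removal down to the trivial partition $\{s,t\}$ and telescoping the errors yields the estimate
\[
  \max_{\sigma \in \mathcal{P}_n}\|\sigma(I(D) - \mu_{s,t})\| \le C_{n,\theta,M,\omega(0,T)}\,\omega(s,t)^\theta,
\]
uniformly in $D$, since $\sum_{N \ge 2}(N-1)^{-\theta} < \infty$ for $\theta > 1$. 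Applying this bound to a partition and its common refinement with a second partition yields the Cauchy property of $I(D)$ as $|D|\to 0$, producing $\int_s^t \beta_u(g_u)\,dg_u$ and the claimed local estimate in one stroke; continuity in $t$ follows by applying the estimate to vanishing intervals, and the closedness of $\mathcal{G}_n$ in $T^{(n)}(\mathcal{V})$ places the limit in $\mathcal{G}_n$.

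The main obstacle is the nonabelian product in $\mathcal{G}_n$: the constants $C_{n,M}, C'_{n,M}$ absorbed above actually involve cross terms $\sigma^{1,i}(\cdot)\otimes\sigma^{2,i}(\cdot)$ generated by the comultiplication formula, with $|\sigma^{j,i}| < |\sigma|$. The estimates must therefore be established by induction on $|\sigma|$. At level $|\sigma| = 1$ one has $N(\sigma) = 0$ and, using $\sigma_0(a) = 1$ for $a \in \mathcal{G}_n$, the product is exactly additive, so the argument collapses to the classical Young integral. At level $|\sigma| = k \ge 2$, the cross terms couple only to strictly lower levels, already controlled and uniformly bounded on $[s,t]$ by the inductive hypothesis and \eqref{condition uniformly bounded linear operator}; the factorial-like decay in $\sum(N-1)^{-\theta}$ then dominates the constant built at level $k$, and the bound propagates up to $|\sigma| = n$.
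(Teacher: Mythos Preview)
Your proof is correct and follows essentially the same approach as the paper: Young--Lyons point removal combined with induction on the grading $|\sigma|$, followed by a refinement argument for convergence. The paper presents the induction up front and then proves convergence, whereas you sketch the sewing argument first and relegate the induction to the final paragraph; but the substance---the cocycle identity yielding $\beta^D_{s,t}-\beta^{D\setminus\{t_j\}}_{s,t}=\beta^D_{s,t_j}\,(\beta_{t_j}-\beta_{t_{j-1}})(g_{t_j},g_{t_j,t_{j+1}})\,\beta^D_{t_{j+1},t}$, the triple comultiplication forcing the flanking factors to sit at strictly lower levels (since $\sigma_0$ of the middle factor vanishes), and the pigeonhole choice $\omega(t_{j-1},t_{j+1})\le 2\omega(s,t)/(l-1)$---is identical.
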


\begin{remark}
Condition $\left( \ref{condition regularity of time-varying constant
one-form}\right) $ is a generalized Young's condition \cite%
{young1936inequality}, representing the compensated regularity between $%
\beta $ and $g$.
\end{remark}

\begin{remark}
\label{Remark continuity}The integral $\int \beta \left( g\right) dg$ is
continuous in the norm%
\begin{equation*}
\max_{\sigma \in \mathcal{P}_{n}}\sup_{0\leq s<t\leq T}\left\Vert \sigma
\left( \beta _{s}\left( g_{s},g_{s,t}\right) \right) \right\Vert
+\max_{\sigma \in \mathcal{P}_{n}}\sup_{0\leq s<u<t\leq T}\frac{\left\Vert
\sigma \left( \left( \beta _{u}-\beta _{s}\right) \left(
g_{u},g_{u,t}\right) \right) \right\Vert }{\omega \left( s,t\right) ^{\theta
}}\text{.}
\end{equation*}
\end{remark}

\begin{proof}
We first prove a uniform bound over all finite partitions. Then we prove the
limit exists.

Since $\beta \in C\left( \left[ 0,T\right] ,B\left( \mathcal{H},\mathcal{G}%
_{n}\right) \right) $ and $\sigma _{0}\left( \mathcal{G}_{n}\right) =1$, we
have%
\begin{equation}
\sigma _{0}\left( \beta _{s}\left( a,b\right) \right) =1\text{, }\forall
a,b\in \mathcal{H}\text{, }\forall s\in \left[ 0,T\right] \text{.}
\label{condition zero level is 1}
\end{equation}%
For $\left[ s,t\right] \subseteq \left[ 0,T\right] $ and finite partition $%
D=\left\{ t_{j}\right\} _{j=0}^{l}$ of $\left[ s,t\right] $, i.e. $%
s=t_{0}<t_{1}<\cdots <t_{l}=t$, we denote 
\begin{equation*}
\beta _{t_{j_{1}},t_{j_{2}}}^{D}:=\beta _{t_{j_{1}}}\left(
g_{t_{j_{1}}},g_{t_{j_{1}},t_{j_{1}+1}}\right) \cdots \beta
_{t_{j_{2}-1}}\left( g_{t_{j_{2}-1}},g_{t_{j_{2}-1},t_{j_{2}}}\right) \text{%
, \ }\forall 0\leq j_{1}\leq j_{2}\leq l\text{.}
\end{equation*}

By using mathematical induction, we first prove that%
\begin{equation}
\max_{\sigma \in \mathcal{P}_{n}}\sup_{D,D\subset \left[ s,t\right]
}\left\Vert \sigma \left( \beta _{s,t}^{D}\right) -\sigma \left( \beta
_{s}\left( g_{s},g_{s,t}\right) \right) \right\Vert \leq C_{n,\theta
,M,\omega \left( 0,T\right) }\omega \left( s,t\right) ^{\theta }\text{, }%
\forall 0\leq s<t\leq T\text{.}
\label{inner uniform boundness in convergence}
\end{equation}%
Using $\left( \ref{condition zero level is 1}\right) $, $\left( \ref{inner
uniform boundness in convergence}\right) $ holds for $\sigma _{0}$. Suppose $%
\left( \ref{inner uniform boundness in convergence}\right) $ holds for $%
\{\sigma |\left\vert \sigma \right\vert \leq k,\sigma \in \mathcal{P}_{n}%
\mathcal{\}}$ for some $k=0,1,\dots ,n-1$. Then, using $\left( \ref%
{condition uniformly bounded linear operator}\right) $, we have%
\begin{equation}
M_{k}:=\max_{\left\vert \sigma \right\vert \leq k,\sigma \in \mathcal{P}%
_{n}}\sup_{0\leq s<t\leq T}\sup_{D,D\subset \left[ s,t\right] }\left\Vert
\sigma \left( \beta _{s,t}^{D}\right) \right\Vert \leq C_{n,\theta ,M,\omega
\left( 0,T\right) }\text{.}  \label{inner definition of Mk}
\end{equation}

Fix $\sigma \in \mathcal{P}_{n}$, $\left\vert \sigma \right\vert =k+1$, $%
\left[ s,t\right] \subseteq \left[ 0,T\right] $ and a finite partition $%
D=\left\{ t_{j}\right\} _{j=0}^{l}\subset \left[ s,t\right] $. By using that 
$\beta _{t_{j-1}}$ is a cocyclic one-form, we have 
\begin{equation*}
\beta _{s,t}^{D}-\beta _{s,t}^{D\backslash \left\{ t_{j}\right\} }=\beta
_{s,t_{j}}^{D}\left( \beta _{t_{j}}-\beta _{t_{j-1}}\right) \left(
g_{t_{j}},g_{t_{j},t_{j+1}}\right) \beta _{t_{j+1},t}^{D}\text{ .}
\end{equation*}%
The multiplication on $\mathcal{G}_{n}$ is induced by the comultiplication
on $\mathcal{P}_{n}$. Then with $I_{d}$ denoting the identity function on $%
\mathcal{P}_{n}$, if 
\begin{equation}
\left( \left( \bigtriangleup \otimes I_{d}\right) \circ \bigtriangleup
\right) \sigma =\tsum\nolimits_{i}\sigma ^{1,i}\otimes \sigma ^{2,i}\otimes
\sigma ^{3,i}\text{,}  \label{inner comultiplication of tau}
\end{equation}%
then%
\begin{equation*}
\sigma \left( \beta _{s,t}^{D}\right) -\sigma \left( \beta
_{s,t}^{D\backslash \left\{ t_{j}\right\} }\right) =\sum_{i,\left\vert
\sigma ^{2,i}\right\vert \geq 1}\sigma ^{1,i}\left( \beta
_{s,t_{j}}^{D}\right) \otimes \sigma ^{2,i}\left( \left( \beta
_{t_{j}}-\beta _{t_{j-1}}\right) \left( g_{t_{j}},g_{t_{j},t_{j+1}}\right)
\right) \otimes \sigma ^{3,i}\left( \beta _{t_{j+1},t}^{D}\right) \text{,}
\end{equation*}%
where $\left\vert \sigma ^{2,i}\right\vert \geq 1$ because $\sigma
_{0}((\beta _{t_{j}}-\beta _{t_{j-1}})(g_{t_{j}},g_{t_{j},t_{j+1}}))=0$.
Since $\left\vert \sigma ^{1,i}\right\vert +\left\vert \sigma
^{2,i}\right\vert +\left\vert \sigma ^{3,i}\right\vert =\left\vert \sigma
\right\vert $, $\forall i$, and $\left\vert \sigma ^{2,i}\right\vert \geq 1$%
, we have $\left\vert \sigma ^{1,i}\right\vert \vee \left\vert \sigma
^{3,i}\right\vert \leq \left\vert \sigma \right\vert -1=k$. Using the
definition of $N\left( n\right) $ in $\left( \ref{Bounded by N(m)}\right) $
and the inductive hypothesis $\left( \ref{inner definition of Mk}\right) $,
we have, for $\left\vert \sigma \right\vert =k+1$, 
\begin{eqnarray*}
\left\Vert \sigma \left( \beta _{s,t}^{D}\right) -\sigma \left( \beta
_{s,t}^{D\backslash \left\{ t_{j}\right\} }\right) \right\Vert &\leq
&C_{n}M_{k}^{2}\max_{\left\vert \sigma ^{\prime }\right\vert \leq k+1,\sigma
^{\prime }\in \mathcal{P}_{n}}\left\Vert \sigma ^{\prime }\left( \left(
\beta _{t_{j}}-\beta _{t_{j-1}}\right) \left(
g_{t_{j}},g_{t_{j},t_{j+1}}\right) \right) \right\Vert \\
&\leq &C_{n,\theta ,M,\omega \left( 0,T\right) }\max_{\left\vert \sigma
^{\prime }\right\vert \leq k+1,\sigma ^{\prime }\in \mathcal{P}%
_{n}}\left\Vert \sigma ^{\prime }\left( \left( \beta _{t_{j}}-\beta
_{t_{j-1}}\right) \left( g_{t_{j}},g_{t_{j},t_{j+1}}\right) \right)
\right\Vert \text{.}
\end{eqnarray*}%
Then, combined with $\left( \ref{condition regularity of time-varying
constant one-form}\right) $, we have%
\begin{equation*}
\left\Vert \sigma \left( \beta _{s,t}^{D}\right) -\sigma \left( \beta
_{s,t}^{D\backslash \left\{ t_{j}\right\} }\right) \right\Vert \leq
C_{n,\theta ,M,\omega \left( 0,T\right) }\omega \left(
t_{j-1},t_{j+1}\right) ^{\theta }\text{.}
\end{equation*}%
For finite partition $D=\left\{ t_{j}\right\} _{k=0}^{l}$ of $\left[ s,t%
\right] $, (as in Theorem 1.16 \cite{lyons2007differential}) we select an
interval $\left[ t_{j-1},t_{j+1}\right] $ that satisfies 
\begin{equation}
\omega \left( t_{j-1},t_{j+1}\right) \leq \frac{2}{l-1}\omega \left(
s,t\right) \text{.}  \label{inner tj to be removed}
\end{equation}%
By recursively removing partition point $t_{j}$ that satisfies $\left( \ref%
{inner tj to be removed}\right) $ (removing the middle point when $l=2$), we
have%
\begin{equation*}
\left\Vert \sigma \left( \beta _{s,t}^{D}\right) -\sigma \left( \beta
_{s}\left( g_{s},g_{s,t}\right) \right) \right\Vert \leq 2^{\theta }\zeta
\left( \theta \right) C_{n,\theta ,M,\omega \left( 0,T\right) }\omega \left(
s,t\right) ^{\theta }=C_{n,\theta ,M,\omega \left( 0,T\right) }\omega \left(
s,t\right) ^{\theta }\text{.}
\end{equation*}%
Hence, $\left( \ref{inner uniform boundness in convergence}\right) $ holds
for $\sigma \in \mathcal{P}_{n}$, $\left\vert \sigma \right\vert =k+1$, and
the induction is complete.

As a consequence, we have 
\begin{equation}
M_{n}:=\max_{\sigma \in \mathcal{P}_{n}}\sup_{0\leq s<t\leq
T}\sup_{D,D\subset \left[ s,t\right] }\left\Vert \sigma \left( \beta
_{s,t}^{D}\right) \right\Vert \leq C_{n,\theta ,M,\omega \left( 0,T\right) }%
\text{.}  \label{inner definition of Mm}
\end{equation}

Then we prove the existence of $\lim_{\left\vert D\right\vert \rightarrow
0,D\subset \left[ s,t\right] }\beta _{s,t}^{D}$. If the limit exists, then $%
\left( \ref{error between integral and approximation}\right) $ holds based
on $\left( \ref{inner uniform boundness in convergence}\right) $. Suppose $%
D^{\prime }=\left\{ s_{w}\right\} \subset \left[ s,t\right] \ $is a
refinement of $D=\left\{ t_{j}\right\} _{j=0}^{l}\subset \left[ s,t\right] $
i.e. for $j=0,1,\dots ,l$, there exists $w_{j}$ such that $s_{w_{j}}=t_{j}$.
Similar as above, for $\sigma \in \mathcal{P}_{n}$, using the linearity of $%
\sigma $ and the comultiplication of $\sigma $ as in $\left( \ref{inner
comultiplication of tau}\right) $, we have%
\begin{eqnarray*}
\sigma \left( \beta _{s,t}^{D}\right) -\sigma \left( \beta _{s,t}^{D^{\prime
}}\right) &=&\tsum\nolimits_{j=0}^{l-1}\sigma \left( \beta
_{s,t_{j}}^{D}\left( \beta _{t_{j},t_{j+1}}^{D}-\beta
_{t_{j},t_{j+1}}^{D^{\prime }}\right) \beta _{t_{j+1},t}^{D^{\prime }}\right)
\\
&=&\tsum\nolimits_{j=0}^{l-1}\tsum\nolimits_{i,\left\vert \sigma
^{2,i}\right\vert \geq 1}\sigma ^{1,i}\left( \beta _{s,t_{j}}^{D}\right)
\sigma ^{2,i}\left( \beta _{t_{j},t_{j+1}}^{D}-\beta
_{t_{j},t_{j+1}}^{D^{\prime }}\right) \sigma ^{3,i}\left( \beta
_{t_{j+1},t}^{D^{\prime }}\right) \text{.}
\end{eqnarray*}%
Then, using the definition of $N\left( n\right) $ in $\left( \ref{Bounded by
N(m)}\right) $ and the definition of $M_{n}$ in $\left( \ref{inner
definition of Mm}\right) $, together with the estimate in $\left( \ref{inner
uniform boundness in convergence}\right) $, we have (since $\beta
_{t_{j},t_{j+1}}^{D}=\beta _{t_{j}}\left( g_{t_{j}},g_{t_{j},t_{j+1}}\right) 
$, $\forall j$)%
\begin{eqnarray*}
\left\Vert \sigma \left( \beta _{s,t}^{D}\right) -\sigma \left( \beta
_{s,t}^{D^{\prime }}\right) \right\Vert &\leq
&C_{n}M_{n}^{2}\tsum\nolimits_{j=0}^{l-1}\max_{\sigma ^{\prime }\in \mathcal{%
P}_{n}}\left\Vert \sigma ^{\prime }\left( \beta _{t_{j}}\left(
g_{t_{j}},g_{t_{j},t_{j+1}}\right) \right) -\sigma ^{\prime }\left( \beta
_{t_{j},t_{j+1}}^{D^{\prime }}\right) \right\Vert \\
&\leq &C_{n,\theta ,M,\omega \left( 0,T\right)
}\tsum\nolimits_{j=0}^{l-1}\omega \left( t_{j},t_{j+1}\right) ^{\theta }\leq
C_{n,\theta ,M,\omega \left( 0,T\right) }\sup_{\left\vert v-u\right\vert
\leq \left\vert D\right\vert }\omega \left( u,v\right) ^{\theta -1}\text{.}
\end{eqnarray*}%
For two general partitions $D$ and $\widetilde{D}$ of $\left[ s,t\right] $,
we have 
\begin{eqnarray*}
\left\Vert \sigma \left( \beta _{s,t}^{D}\right) -\sigma \left( \beta
_{s,t}^{\widetilde{D}}\right) \right\Vert &\leq &\left\Vert \sigma \left(
\beta _{s,t}^{D}\right) -\sigma \left( \beta _{s,t}^{D\cup \widetilde{D}%
}\right) \right\Vert +\left\Vert \sigma \left( \beta _{s,t}^{\widetilde{D}%
}\right) -\sigma \left( \beta _{s,t}^{D\cup \widetilde{D}}\right) \right\Vert
\\
&\leq &C_{n,\theta ,M,\omega \left( 0,T\right) }\sup_{\left\vert
v-u\right\vert \leq \left( \left\vert D\right\vert \vee \left\vert 
\widetilde{D}\right\vert \right) }\omega \left( u,v\right) ^{\theta -1}\text{%
.}
\end{eqnarray*}%
Since $\theta >1$, we have that%
\begin{equation*}
\int_{s}^{t}\beta _{u}\left( g_{u}\right) dg_{u}:=\lim_{\left\vert
D\right\vert \rightarrow 0,D\subset \left[ s,t\right] }\beta _{s,t}^{D}\text{
\ exists.}
\end{equation*}
\end{proof}

\subsection{Extension theorem}

As an application of the integration constructed in Section \ref{Subsection
existence of integral}, we prove the extension theorem (as in Theorem 2.2.1 
\cite{lyons1998differential}, Proposition 9 \cite{gubinelli2004controlling}
and Theorem 7.3 \cite{gubinelli2010ramification}). We represent the extended
group-valued path as the integral of a time-varying cocyclic one-form
against the original group-value path.

Recall the consistent family of triples $\{(T^{\left( n\right) }\left( 
\mathcal{V}\right) ,\mathcal{G}_{n},\mathcal{P}_{n})\}_{n=0}^{\infty }$ in
Notation \ref{Notation consistent triples} and that $\sigma _{0}\in \mathcal{%
P}_{n}$ denotes the projection of $T^{\left( n\right) }\left( \mathcal{V}%
\right) $ to $%
\mathbb{R}
$.

\begin{notation}[Group $\mathcal{T}_{n}$]
For integer $n\geq 0$,\ we denote by $\mathcal{T}_{n}$ the closed
topological group in $T^{\left( n\right) }\left( \mathcal{V}\right) $
consisting all $a\in T^{\left( n\right) }\left( \mathcal{V}\right) $
satisfying $\sigma _{0}\left( a\right) =1$.
\end{notation}

For $a\in \mathcal{T}_{n}$, we define $a^{-1}$ as the algebraic polynomial $%
1+\sum_{k=1}^{n}\left( -1\right) ^{k}\left( a-1\right) ^{k}$, where the
multiplication in $\left( a-1\right) ^{k}$ is in the algebra $T^{\left(
n\right) }\left( \mathcal{V}\right) $. Since we assumed that $\mathcal{G}%
_{n} $ is closed and $\sigma _{0}\left( \mathcal{G}_{n}\right) =1$, $%
\mathcal{G}_{n}$ is a closed subgroup of $\mathcal{T}_{n}$.

\begin{notation}[{$\left\Vert g\right\Vert _{p-var,\left[ 0,T\right] }$}]
\label{Definition p-var of group-valued path}We equip $\mathcal{T}_{n}$ (so $%
\mathcal{G}_{n}$) with the norm%
\begin{equation}
\left\vert a\right\vert :=\tsum\nolimits_{\sigma \in \mathcal{P}%
_{n},\left\vert \sigma \right\vert \geq 1}\left\Vert \sigma \left( a\right)
\right\Vert ^{\frac{1}{\left\vert \sigma \right\vert }}\text{, }\forall a\in 
\mathcal{T}_{n}\text{.}  \label{Definition of norm on G}
\end{equation}%
For $p\geq 1$ and $g\in C\left( \left[ 0,T\right] ,\mathcal{T}_{n}\right) $,
we define the $p$-variation of $g$ by 
\begin{equation*}
\left\Vert g\right\Vert _{p-var,\left[ 0,T\right] }:=\sup_{D,D\subset \left[
0,T\right] }\left( \tsum\nolimits_{k,t_{k}\in D}\left\vert
g_{t_{k},t_{k+1}}\right\vert ^{p}\right) ^{\frac{1}{p}}\text{, }%
g_{s,t}:=g_{s}^{-1}g_{t}\text{.}
\end{equation*}
\end{notation}

\begin{notation}[{$C^{p-var}\left( \left[ 0,T\right] ,\mathcal{T}_{n}\right) $%
}]
We denote the set of continuous paths of finite $p$-variation on $\left[ 0,T%
\right] $ taking values in $\mathcal{T}_{n}$ by $C^{p-var}\left( \left[ 0,T%
\right] ,\mathcal{T}_{n}\right) $ (similarly denote $C^{p-var}\left( \left[
0,T\right] ,\mathcal{G}_{n}\right) $).
\end{notation}

Let $m\geq n\geq 1$ be integers. We recall the algebra homomorphism $1_{n}$
from $T^{\left( m\right) }\left( \mathcal{V}\right) $ to $T^{\left( n\right)
}\left( \mathcal{V}\right) $\ in Notation \ref{Notation consistent triples}
defined by $1_{n}\left( a\right) =\sum_{\sigma \in \mathcal{P}_{n}}\sigma
\left( a\right) $, $\forall a\in T^{\left( m\right) }\left( \mathcal{V}%
\right) $. The algebra homomorphism $1_{n}$ induces a group homomorphism
from $\mathcal{G}_{m}$ to $\mathcal{G}_{n}$ that satisfies $1_{n}\left( 
\mathcal{G}_{m}\right) =\mathcal{G}_{n}$. For $p\geq 1$, $\left[ p\right] $
denotes the largest integer that is less or equal to $p$.

Let $g\in C^{p-var}\left( \left[ 0,T\right] ,\mathcal{G}_{\left[ p\right]
}\right) $ and integer $n\geq \left[ p\right] +1$. We prove that the step-$n$
extension of $g$ exists uniquely and can be represented as the integral of a
slow-varying cocyclic one-form against $g$. In general the extended path
lives in $\mathcal{T}_{n}$ but not in $\mathcal{G}_{n}$. To guarantee that
the extended path takes values in $\mathcal{G}_{n}$, we further assume that $%
\mathcal{G}_{n}$ is \textquotedblleft \thinspace large\thinspace
\textquotedblright\ enough to accommodate the extended path. More
specifically, we assume that

\begin{condition}
\label{Condition extended path taking value in group Gn}For any integer $%
n\geq 1$, there exists a constant $C_{n}>0$ such that, for any $a\in 
\mathcal{G}_{n}$ there exists $\widetilde{a}\in \mathcal{G}_{n+1}$
satisfying $1_{n}\left( \widetilde{a}\right) =a$ and $\left\vert \widetilde{a%
}\right\vert \leq C_{n}\left\vert a\right\vert $ (with $\left\vert \cdot
\right\vert $ defined in $\left( \ref{Definition of norm on G}\right) $).
\end{condition}

Suppose $a\in \mathcal{G}_{n}$. When $\mathcal{G}_{n}$ is the step-$n$
nilpotent Lie group, we can let $\widetilde{a}:=\exp _{n+1}\left( \log
_{n}a\right) $ with $\log $ and $\exp $ defined by algebraic series and the
lower index $n$ indicates the level of truncation. When $\mathcal{G}_{n}$ is
the step-$n$ Butcher group with $\mathcal{P}_{n}^{\prime }=\left\{ \tau
|\left\vert \tau \right\vert \leq n\right\} $ denoting the set of labelled
trees of degree less or equal to $n$, we can let $\widetilde{a}%
:=a+\sum_{k=2}^{n+1}\sum_{\tau _{i}\in \mathcal{P}_{n}^{\prime },\left\vert
\tau _{1}\right\vert +\cdots +\left\vert \tau _{k}\right\vert =n+1}\tau
_{1}\left( a\right) \cdots \tau _{k}\left( a\right) e_{\tau _{1}}\otimes
\cdots \otimes e_{\tau _{k}}$ with $e_{\tau _{i}}$ denoting the tensor
coordinate corresponding to $\tau _{i}$. Then it can be checked that,
Condition \ref{Condition extended path taking value in group Gn} holds in
both cases.

\begin{theorem}[Extension]
\label{Example extension}Let $p\geq 1$ be a real number. \textit{For} $g\in
C^{p-var}\left( \left[ 0,T\right] ,\mathcal{T}_{\left[ p\right] }\right) $
and integer $n\geq \left[ p\right] +1$, there exists a unique $g^{n}\in
C^{p-var}\left( \left[ 0,T\right] ,\mathcal{T}_{n}\right) $ satisfying $%
g_{0}^{n}=1\in \mathcal{T}_{n}$ and $1_{\left[ p\right] }\left(
g_{t}^{n}\right) =g_{0,t}$, $\forall t\in \left[ 0,T\right] $. Moreover,
there exists $\beta \in C\left( \left[ 0,T\right] ,B\left( \mathcal{T}_{%
\left[ p\right] },\mathcal{T}_{n}\right) \right) $ such that $\beta $ and $g$
satisfy the integrable condition (Condition \ref{Condition integrable
condition}) and%
\begin{equation*}
g_{t}^{n}=\int_{0}^{t}\beta _{u}\left( g_{u}\right) dg_{u}\text{, }\forall
t\in \left[ 0,T\right] \text{.}
\end{equation*}%
There exists a constant $C_{n,p}$ (which only depends on $n$ and $p$) such
that%
\begin{equation}
\left\Vert g^{n}\right\Vert _{p-var,\left[ s,t\right] }\leq
C_{n,p}\left\Vert g\right\Vert _{p-var,\left[ s,t\right] }\text{, }\forall
0\leq s\leq t\leq T\text{.}  \label{boundedness of p-var}
\end{equation}%
If we further assume that $g$ takes values in $\mathcal{G}_{\left[ p\right]
} $ and Condition \ref{Condition extended path taking value in group Gn}
holds, then $g^{n}$ takes value in $\mathcal{G}_{n}$.
\end{theorem}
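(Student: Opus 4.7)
The plan is to proceed by induction on the level $n$, extending one level at a time from the base case $n = [p]$ up to the required $n$. For $n = [p]$ we take $g^{[p]} := g$ itself and the tautological cocyclic one-form $\beta^{[p]}_t(a, b) := b$, which trivially satisfies the cocycle identity and the Integrable Condition. Suppose inductively that we have constructed an extension $g^n \in C^{p-var}([0,T], \mathcal{T}_n)$ with $\sigma_0(g^n_t) = 1$, $1_{[p]}(g^n_t) = g_{0,t}$, together with a cocyclic one-form $\beta^n \in C([0,T], B(\mathcal{T}_{[p]}, \mathcal{T}_n))$ satisfying Condition \ref{Condition integrable condition} relative to $g$ and $g^n_t = \int_0^t \beta^n_u(g_u)\,dg_u$.

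To pass from $n$ to $n+1$, the central step is to construct $\beta^{n+1} \in C([0,T], B(\mathcal{T}_{[p]}, \mathcal{T}_{n+1}))$ whose projections $\sigma(\beta^{n+1}_t)$ for $|\sigma| \leq n$ agree with those of $\beta^n_t$, and whose top-level component is forced by the cocycle condition. Using the comultiplication formula (\ref{multiplication is induced by comultiplication}), the requirement $\beta^{n+1}_t(a, bc) = \beta^{n+1}_t(a, b)\,\beta^{n+1}_t(ab, c)$ at $\sigma \in \mathcal{P}_{n+1}$ with $|\sigma| = n+1$ reduces to an affine equation for the top-level piece whose inhomogeneous part is an explicit expression in the already-determined lower-level components. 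I will use Proposition \ref{Proposition equivalent definition of cocyclic one-form} to encode this extension through a linear map $\alpha^{n+1}_t : T^{([p])}(\mathcal{V}) \to T^{(n+1)}(\mathcal{V})$ extending the $\alpha^n_t$ attached to $\beta^n$, which automatically delivers a genuine cocyclic one-form.

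The existence of the integral $g^{n+1}_t := \int_0^t \beta^{n+1}_u(g_u)\,dg_u$ then reduces to verifying Condition \ref{Condition integrable condition} at the new top level. The uniform bound on $\sigma(\beta^{n+1}_s(g_s, g_{s,t}))$ for $|\sigma| = n+1$ follows from the lower-level bounds combined with the algebraic structure of the cross-terms. The compensated regularity estimate $\|\pi_{n+1}((\beta^{n+1}_u - \beta^{n+1}_s)(g_u, g_{u,t}))\| \leq \omega(s,t)^\theta$ is obtained with the choice $\omega(s,t) := \|g\|_{p-var,[s,t]}^p$: the relevant differences decompose into products of lower-level increments of $g$ (and of the already-constructed extensions) whose orders sum to $n+1$, so each such product is bounded by $\omega(s,t)^{(n+1)/p}$ with $\theta := (n+1)/p > 1$ since $n+1 \geq [p]+1 > p$. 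Theorem \ref{Theorem integrating slow varying cocyclic one forms} then supplies $g^{n+1}$ together with the approximation bound (\ref{error between integral and approximation}); the compatibility $1_n(g^{n+1}) = g^n$ is immediate from the agreement of lower-level projections, and the $p$-variation bound (\ref{boundedness of p-var}) follows by summing the approximation over a partition and invoking super-additivity of $\omega$.

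Uniqueness is a standard Young-type argument: if $\tilde g^n$ is another extension, the difference $\pi_k(\tilde g^n - g^n)$ vanishes for $k \leq [p]$, and at each successive level $k \leq n$ the difference is additive in the increments and controlled by $\omega^{k/p}$ with $k/p > 1$, hence identically zero. For the refinement to $\mathcal{G}_n$, at each inductive step I will apply Condition \ref{Condition extended path taking value in group Gn} to select a representative in $\mathcal{G}_{n+1}$ with controlled norm; since uniqueness in $\mathcal{T}_{n+1}$ forces this representative to coincide with the constructed $g^{n+1}$, the extension automatically lies in $\mathcal{G}_{n+1}$. I expect the main obstacle to be the verification of (\ref{condition regularity of time-varying constant one-form}) at the new top level: this is the sewing-type heart of the argument, where the structural comultiplication assumption (\ref{multiplication is induced by comultiplication}) combined with $n+1 > p$ is what yields the exponent $\theta > 1$ needed to close the induction through Theorem \ref{Theorem integrating slow varying cocyclic one forms}.
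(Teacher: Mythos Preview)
Your inductive strategy and the use of Proposition~\ref{Proposition equivalent definition of cocyclic one-form} to build the cocyclic one-form are essentially the paper's approach; the paper at each step integrates $\beta^m$ against the already-built $g^m$ (taking $\alpha_s(c)=1_m((g^m_s)^{-1}c)$ in $\mathcal{T}_{m+1}$) and only afterwards packages the result as a single integral against $g$, whereas you integrate against $g$ throughout. Both routes work and lead to the same estimates.

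There are, however, two genuine gaps.

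First, the bound \eqref{boundedness of p-var} with a constant depending \emph{only} on $n$ and $p$ does not follow from ``summing the approximation over a partition''. The constant in Theorem~\ref{Theorem integrating slow varying cocyclic one forms} is $C_{n,\theta,M,\omega(0,T)}$, so your argument yields $\|g^n\|_{p\text{-var},[s,t]}\le C_{n,p,\|g\|_{p\text{-var},[0,T]}}\|g\|_{p\text{-var},[s,t]}$. To strip the dependence on $\|g\|_{p\text{-var},[0,T]}$ the paper applies the dilation $\delta_c$ with $c=\|g\|_{p\text{-var},[0,T]}^{-1}$ and uses uniqueness of the extension to identify $\delta_c\bigl(\int\beta(g)\,dg\bigr)$ with $\int\beta(\delta_c g)\,d(\delta_c g)$, reducing to the case $\|g\|_{p\text{-var}}=1$. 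You need this (or an equivalent) scaling step.

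Second, and more seriously, your argument that $g^n$ lands in $\mathcal{G}_n$ is incorrect as written. Condition~\ref{Condition extended path taking value in group Gn} gives, for each increment $g^m_{s,t}\in\mathcal{G}_m$, a lift $\widetilde a\in\mathcal{G}_{m+1}$ with controlled norm; but the collection $\{\widetilde a^{s,t}\}$ is \emph{not} a path (it is not multiplicative), so you cannot invoke uniqueness of extensions in $\mathcal{T}_{m+1}$ to force it to equal $g^{m+1}$. The paper's fix is different: it shows that for any partition $D=\{t_j\}$,
\[
\bigl\|\,1_m(g^m_{t_0,t_1})\cdots 1_m(g^m_{t_{l-1},t_l})-\widetilde a^{\,t_0,t_1}\cdots \widetilde a^{\,t_{l-1},t_l}\,\bigr\|
\le C_m\sum_j \|g^m\|_{p\text{-var},[t_j,t_{j+1}]}^{m+1}\longrightarrow 0
\]
as $|D|\to 0$ (since $m+1>p$). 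Hence $g^{m+1}_t$ is also the limit of the group-valued products $\widetilde a^{\,t_0,t_1}\cdots \widetilde a^{\,t_{l-1},t_l}\in\mathcal{G}_{m+1}$, and closedness of $\mathcal{G}_{m+1}$ finishes the job. Your proposal needs this approximation-by-group-products argument, not a uniqueness appeal.
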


\begin{proof}
\textit{Uniqueness}. Suppose $h^{1}$ and $h^{2}$ are two extensions of $g$
in $\mathcal{T}_{n}$ with finite $p$-variation. Then for $\sigma \in 
\mathcal{P}_{\left[ p\right] }$, $\sigma \left( h^{1}\right) =\sigma \left(
h^{2}\right) $. For $\sigma \in \mathcal{P}_{n}\backslash \mathcal{P}_{\left[
p\right] }$, if suppose $\left( \left( \bigtriangleup \otimes I_{d}\right)
\circ \bigtriangleup \right) \sigma =\sum_{i}\sigma ^{1,i}\otimes \sigma
^{2,i}\otimes \sigma ^{3,i}$, then 
\begin{eqnarray}
&&\left\Vert \sigma \left( h_{s,t}^{1}\right) -\sigma \left(
h_{s,t}^{2}\right) \right\Vert  \label{inner extension1} \\
&\leq &\lim_{\left\vert D\right\vert \rightarrow 0,D\subset \left[ s,t\right]
}\sum\nolimits_{k,t_{k}\in D}\sum\nolimits_{i,\left\vert \sigma
^{2,i}\right\vert \geq \left[ p\right] +1}\left\Vert \sigma ^{1,i}\left(
h_{s,t_{k}}^{1}\right) \right\Vert \left\Vert \sigma ^{2,i}\left(
h_{t_{k},t_{k+1}}^{1}\right) -\sigma ^{2,i}\left(
h_{t_{k},t_{k+1}}^{2}\right) \right\Vert \left\Vert \sigma ^{3,i}\left(
h_{t_{k+1},t}^{2}\right) \right\Vert  \notag \\
&\leq &C_{n,p,\left\Vert h^{1}\right\Vert _{p-var,\left[ 0,T\right]
},\left\Vert h^{2}\right\Vert _{p-var,\left[ 0,T\right] }}\lim_{\left\vert
D\right\vert \rightarrow 0,D\subset \left[ s,t\right] }\sum\limits_{k,t_{k}%
\in D}\left( \left\Vert h^{1}\right\Vert _{p-var,\left[ t_{k},t_{k+1}\right]
}^{\left[ p\right] +1}+\left\Vert h^{2}\right\Vert _{p-var,\left[
t_{k},t_{k+1}\right] }^{\left[ p\right] +1}\right) =0\text{.}  \notag
\end{eqnarray}

\textit{Existence.} We prove by mathematical induction. Denote $g^{\left[ p%
\right] }:=g$. For $m=\left[ p\right] ,\dots ,n-1$, we assume $g^{m}\in
C^{p-var}\left( \left[ 0,T\right] ,\mathcal{T}_{m}\right) $, which holds
when $m=\left[ p\right] $, and define $\beta ^{m}\in C\left( \left[ 0,T%
\right] ,B\left( \mathcal{T}_{m},\mathcal{T}_{m+1}\right) \right) $ by 
\begin{equation*}
\beta _{s}^{m}\left( a,b\right) :=\left( 1_{m}\left( \left( g_{s}^{m}\right)
^{-1}a\right) \right) ^{-1}1_{m}\left( \left( g_{s}^{m}\right)
^{-1}ab\right) \text{, }\forall a,b\in \mathcal{T}_{m}\text{, }\forall s\in %
\left[ 0,T\right] \text{,}
\end{equation*}%
where we used the implicit identification of $\mathcal{T}_{m}$ as a subset
of $\mathcal{T}_{m+1}$, and all operations are in $\mathcal{T}_{m+1}$ except
the multiplication between $a$ and $b$ is in $\mathcal{T}_{m}$. (That $\beta
_{s}^{m}$ is a cocyclic one-form follows from Proposition \ref{Proposition
equivalent definition of cocyclic one-form}.) For $s<u<t$, we have%
\begin{eqnarray*}
\left( \beta _{u}^{m}-\beta _{s}^{m}\right) \left(
g_{u}^{m},g_{u,t}^{m}\right) &=&1_{m}\left( g_{u,t}^{m}\right) -\left(
1_{m}\left( g_{s,u}^{m}\right) \right) ^{-1}1_{m}\left( g_{s,t}^{m}\right) \\
&=&\left( 1_{m}\left( g_{s,u}^{m}\right) \right) ^{-1}\left( 1_{m}\left(
g_{s,u}^{m}\right) 1_{m}\left( g_{u,t}^{m}\right) -1_{m}\left(
g_{s,t}^{m}\right) \right) \\
&=&\left( 1_{m}\left( g_{s,u}^{m}\right) \right) ^{-1}\left(
\tsum\nolimits_{\sigma _{i}\in \mathcal{P}_{\left[ p\right] },\left\vert
\sigma _{1}\right\vert +\left\vert \sigma _{2}\right\vert =m+1}\sigma
_{1}\left( g_{s,u}^{m}\right) \sigma _{2}\left( g_{u,t}^{m}\right) \right) 
\text{.}
\end{eqnarray*}%
We assumed that $g^{m}$ is of finite $p$-variation. Then since $\beta
_{s}^{m}\left( g_{s}^{m},g_{s,t}^{m}\right) =1_{m}\left( g_{s,t}^{m}\right) $%
, we have%
\begin{gather*}
\max_{\sigma \in \mathcal{P}_{m+1}}\sup_{0\leq s<t\leq T}\left\Vert \sigma
\left( \beta _{s}^{m}\left( g_{s}^{m},g_{s,t}^{m}\right) \right) \right\Vert
\leq 1\vee \left\Vert g^{m}\right\Vert _{p-var,\left[ 0,T\right] }^{m}\text{,%
} \\
\max_{\sigma \in \mathcal{P}_{m+1}}\left\Vert \sigma \left( \left( \beta
_{u}^{m}-\beta _{s}^{m}\right) \left( g_{u}^{m},g_{u,t}^{m}\right) \right)
\right\Vert \leq C_{m+1,\left\Vert g^{m}\right\Vert _{p-var,\left[ 0,T\right]
}}\left\Vert g^{m}\right\Vert _{p-var,\left[ s,t\right] }^{m+1}\text{, }%
\forall s<t\text{.}
\end{gather*}%
Since $m+1\geq \left[ p\right] +1>p$, $\left( \beta ^{m},g^{m}\right) $
satisfies the integrable condition. Then by using that $\sigma \left( \beta
_{s}^{m}\left( g_{s}^{m},g_{s,t}^{m}\right) \right) $ equals $\sigma
(\tint\nolimits_{s}^{t}\beta _{u}^{m}\left( g_{u}^{m}\right) dg_{u}^{m})$
when $\sigma \in \mathcal{P}_{m}$ and $\sigma \left( \beta _{s}^{m}\left(
g_{s}^{m},g_{s,t}^{m}\right) \right) $ equals zero when $\sigma \in \mathcal{%
P}_{m+1}\backslash \mathcal{P}_{m}$, and combined with the estimate of the
integral in Theorem \ref{Theorem integrating slow varying cocyclic one forms}%
, we have%
\begin{eqnarray*}
\max_{\sigma \in \mathcal{P}_{m+1}\backslash \mathcal{P}_{m}}\left\Vert
\sigma \left( \tint\nolimits_{s}^{t}\beta _{u}^{m}\left( g_{u}^{m}\right)
dg_{u}^{m}\right) \right\Vert &=&\max_{\sigma \in \mathcal{P}%
_{m+1}}\left\Vert \sigma \left( \tint\nolimits_{s}^{t}\beta _{u}^{m}\left(
g_{u}^{m}\right) dg_{u}^{m}\right) -\sigma \left( \beta _{s}^{m}\left(
g_{s}^{m},g_{s,t}^{m}\right) \right) \right\Vert \\
&\leq &C_{m+1,p,\left\Vert g^{m}\right\Vert _{p-var,\left[ 0,T\right]
}}\left\Vert g^{m}\right\Vert _{p-var,\left[ s,t\right] }^{m+1}\text{.}
\end{eqnarray*}%
As a result, if we define%
\begin{equation*}
g_{t}^{m+1}:=\int_{0}^{t}\beta _{u}^{m}\left( g_{u}^{m}\right) dg_{u}^{m}%
\text{, }t\in \left[ 0,T\right] \text{,}
\end{equation*}%
then%
\begin{equation*}
\left\Vert g^{m+1}\right\Vert _{p-var,\left[ 0,T\right] }\leq
C_{m+1,p,\left\Vert g^{m}\right\Vert _{p-var,\left[ 0,T\right] }}\left\Vert
g^{m}\right\Vert _{p-var,\left[ 0,T\right] }
\end{equation*}%
which holds inductively for $m=\left[ p\right] ,\dots ,n-1$. Since the
constant could be chosen to be monotone in $m+1$ and $\left\Vert
g^{m}\right\Vert _{p-var,\left[ 0,T\right] }$, we have (with $g^{\left[ p%
\right] }:=g$)%
\begin{equation}
\left\Vert g^{n}\right\Vert _{p-var,\left[ 0,T\right] }\leq
C_{n,p,\left\Vert g\right\Vert _{p-var,\left[ 0,T\right] }}\left\Vert
g\right\Vert _{p-var,\left[ 0,T\right] }<\infty \text{.}
\label{inner extension3}
\end{equation}%
Since $1_{\left[ p\right] }\left( g^{n}\right) =1_{\left[ p\right] }\left(
g^{n-1}\right) =\cdots =g$, $g^{n}$ is an extension of $g$ in $\mathcal{T}%
_{n}$. Combined with the uniqueness, $g^{n}$ is the unique step-$n$
extension of $g$ with finite $p$-variation.

\textit{Representation.} Define $\beta \in C\left( \left[ 0,T\right]
,B\left( \mathcal{T}_{\left[ p\right] },\mathcal{T}_{n}\right) \right) $ by 
\begin{equation*}
\beta _{s}\left( a,b\right) :=\left( 1_{\left[ p\right] }\left(
g_{s}^{-1}a\right) \right) ^{-1}1_{\left[ p\right] }\left(
g_{s}^{-1}ab\right) \text{, }\forall a,b\in \mathcal{T}_{\left[ p\right] }%
\text{, }\forall s\in \left[ 0,T\right] \text{.}
\end{equation*}%
As in the case of $\beta ^{m}$ and $g^{m}$, $\beta $ is integrable against $%
g $, and the integral satisfies%
\begin{equation}
\max_{\sigma \in \mathcal{P}_{n}}\left\Vert \sigma \left(
\tint\nolimits_{s}^{t}\beta _{u}\left( g_{u}\right) dg_{u}\right) -\sigma
\left( g_{s,t}\right) \right\Vert \leq C_{n,p,\left\Vert g\right\Vert
_{p-var,\left[ 0,T\right] }}\left\Vert g\right\Vert _{p-var,\left[ s,t\right]
}^{\left[ p\right] +1}\text{, }\forall 0\leq s\leq t\leq T\text{.}
\label{inner extension2}
\end{equation}%
By using $1_{\left[ p\right] }\left( \int \beta \left( g\right) dg\right)
=1_{\left[ p\right] }\left( g^{n}\right) $, using $\left( \ref{inner
extension2}\right) $ and by following similar argument as in $\left( \ref%
{inner extension1}\right) $, we have%
\begin{equation*}
g_{t}^{n}=\int_{0}^{t}\beta _{u}\left( g_{u}\right) dg_{u},\text{ }\forall
t\in \left[ 0,T\right] .
\end{equation*}

The constant in $\left( \ref{inner extension3}\right) $ can be chosen to be
independent of $\left\Vert g\right\Vert _{p-var,\left[ 0,T\right] }$. For $%
c>0$, denote by $\delta _{c}$ the dilation operator i.e. $\delta
_{c}a=\sum_{\sigma }c^{\left\vert \sigma \right\vert }\sigma \left( a\right) 
$. Without loss of generality, we assume $\left\Vert g\right\Vert _{p-var,%
\left[ 0,T\right] }>0$ and denote $c:=\left\Vert g\right\Vert _{p-var,\left[
0,T\right] }^{-1}$. Then $\left\Vert \delta _{c}g\right\Vert _{p-var,\left[
0,T\right] }=1$, and for any $s<t$, 
\begin{eqnarray*}
c\left\Vert \tint_{0}^{\cdot }\beta _{u}\left( g_{u}\right)
dg_{u}\right\Vert _{p-var,\left[ s,t\right] } &=&\left\Vert \delta
_{c}\left( \tint_{0}^{\cdot }\beta _{u}\left( g_{u}\right) dg_{u}\right)
\right\Vert _{p-var,\left[ s,t\right] }=\left\Vert \tint_{0}^{\cdot }\beta
_{u}\left( \left( \delta _{c}g\right) _{u}\right) d\left( \delta
_{c}g\right) _{u}\right\Vert _{p-var,\left[ s,t\right] } \\
&\leq &C_{p,n}\left\Vert \delta _{c}g\right\Vert _{p-var,\left[ s,t\right]
}=cC_{p,n}\left\Vert g\right\Vert _{p-var,\left[ s,t\right] }\text{,}
\end{eqnarray*}%
where we used the uniqueness of extension because $1_{\left[ p\right]
}\left( \delta _{c}\left( \tint \beta \left( g\right) dg\right) \right)
=\delta _{c}g=1_{\left[ p\right] }\left( \int \beta \left( \delta
_{c}g\right) d\delta _{c}g\right) $.

Then we check that, when $g\in C^{p-var}\left( \left[ 0,T\right] ,\mathcal{G}%
_{\left[ p\right] }\right) $ and Condition \ref{Condition extended path
taking value in group Gn} holds, $\int \beta \left( g\right) dg=g^{n}$ takes
values in $\mathcal{G}_{n}$. For $m=\left[ p\right] ,\dots ,n-1$, suppose
that $g^{m}$ takes values in $\mathcal{G}_{m}$, which holds when $m=\left[ p%
\right] $. Based on Condition \ref{Condition extended path taking value in
group Gn}, there exists a constant $C_{m}>0$ such that for any $0\leq
s<t\leq T$ there exists $a_{m+1}^{s,t}\in \mathcal{G}_{m+1}$ such that 
\begin{equation*}
1_{m}\left( a_{m+1}^{s,t}\right) =g_{s,t}^{m}\text{ and }\tsum\nolimits_{%
\sigma \in \mathcal{P}_{m+1}\backslash \mathcal{P}_{m}}\left\Vert \sigma
\left( a_{m+1}^{s,t}\right) \right\Vert \leq C_{m}\left\Vert
g^{m}\right\Vert _{p-var,\left[ s,t\right] }^{m+1}\text{.}
\end{equation*}%
Then we have (with multiplications in $\mathcal{T}_{m+1}$)%
\begin{eqnarray*}
&&|\hspace{-0.01in}|1_{m}\left( g_{t_{0},t_{1}}^{m}\right) \cdots
1_{m}\left( g_{t_{l-1},t_{l}}^{m}\right)
-a_{m+1}^{t_{0},t_{1}}a_{m+1}^{t_{1},t_{2}}\cdots a_{m+1}^{t_{l-1},t_{l}}|%
\hspace{-0.01in}| \\
&=&\left\Vert \tsum\nolimits_{j=0}^{l-1}\tsum\nolimits_{\sigma \in \mathcal{P%
}_{m+1}\backslash \mathcal{P}_{m}}\sigma \left( a^{t_{j},t_{j+1}}\right)
\right\Vert \leq C_{m}\tsum\nolimits_{j=0}^{l-1}\left\Vert g^{m}\right\Vert
_{p-var,\left[ t_{j},t_{j+1}\right] }^{m+1} \\
&\leq &C_{m}\left\Vert g\right\Vert _{p-var,\left[ 0,T\right]
}^{p}\sup\nolimits_{\left\vert v-u\right\vert \leq \left\vert D\right\vert
}\left\Vert g^{m}\right\Vert _{p-var,\left[ u,v\right] }^{m+1-p}\rightarrow 0%
\text{ as }\left\vert D\right\vert \rightarrow 0\text{ \ (since }m+1\geq %
\left[ p\right] +1>p\text{).}
\end{eqnarray*}%
As a result,%
\begin{eqnarray*}
g_{t}^{m+1} &=&\tint_{0}^{t}\beta _{u}^{m}\left( g_{u}^{m}\right)
dg_{u}^{m}=\lim_{\left\vert D\right\vert \rightarrow 0,D=\left\{
t_{j}\right\} _{j=0}^{l}\subset \left[ 0,t\right] }1_{m}\left(
g_{t_{0},t_{1}}^{m}\right) \cdots 1_{m}\left( g_{t_{l-1},t_{l}}^{m}\right) \\
&=&\lim_{\left\vert D\right\vert \rightarrow 0,D\subset \left[ 0,t\right]
}a_{m+1}^{t_{0},t_{1}}a_{m+1}^{t_{1},t_{2}}\cdots a_{m+1}^{t_{l-1},t_{l}}%
\text{, }\forall t\in \left[ 0,T\right] \text{.}
\end{eqnarray*}%
Since $a_{m+1}^{s,t}\in \mathcal{G}_{m+1}$ and $\mathcal{G}_{m+1}$ is
closed, $g^{m+1}\ $takes values in $\mathcal{G}_{m+1}$.
\end{proof}

\section{Dominated paths}

Let $\mathcal{V}$ be a Banach space and suppose that $(T^{(n)}(\mathcal{V)},%
\mathcal{G}_{n},\mathcal{P}_{n})\ $is a triple as in Notation \ref{Notation
triple}. For Banach spaces $E$ and $F$, let $L(E,F\mathcal{)}$ denote the
set of continuous linear mappings from $E$ to $F$.

\subsection{Structural assumptions on the group}

\label{Section dominated path}

Dominated paths are Banach-space valued paths that can be represented as
integrals of time-varying cocyclic one-forms against a given group-valued
path. We would like the set of dominated paths to be stable under some basic
operations, which imposes some structural conditions on the group.

\begin{condition}
\label{Condition smallest Banach space}$T^{\left( n\right) }\left( \mathcal{V%
}\right) $ is the smallest Banach space that includes $\mathcal{G}_{n}$, in
the sense that, for Banach space $\mathcal{U}$ and $\alpha \in L\left(
T^{\left( n\right) }\left( \mathcal{V}\right) ,\mathcal{U}\right) $, if $%
\alpha \left( g\right) =0$, $\forall g\in \mathcal{G}_{n}$, then $\alpha
\left( v\right) =0$, $\forall v\in \mathcal{V}^{\otimes k}$, $k=0,\dots ,n$.
\end{condition}

\begin{condition}
\label{Condition P is an algebra}For $\sigma _{i}\in \mathcal{P}_{n}$, $%
i=1,\dots ,k$, satisfying $\left\vert \sigma _{1}\right\vert +\left\vert
\sigma _{2}\right\vert +\cdots +\left\vert \sigma _{k}\right\vert \leq n$,
there exists $\sigma _{1}\ast \sigma _{2}\ast \cdots \ast \sigma _{k}\in L(%
\mathcal{V}^{\otimes \left( \left\vert \sigma _{1}\right\vert +\cdots
+\left\vert \sigma _{k}\right\vert \right) },\mathcal{V}^{\otimes \left\vert
\sigma _{1}\right\vert }\otimes \cdots \otimes \mathcal{V}^{\otimes
\left\vert \sigma _{k}\right\vert })$ such that 
\begin{equation}
\left( \sigma _{1}\ast \sigma _{2}\ast \cdots \ast \sigma _{k}\right) \left(
a\right) =\sigma _{1}\left( a\right) \otimes \sigma _{2}\left( a\right)
\otimes \cdots \otimes \sigma _{k}\left( a\right) \text{, }\forall a\in 
\mathcal{G}_{n}\text{.}  \label{linear map sigma1*sigma2}
\end{equation}
\end{condition}

It is always possible to extend the algebra (and group) by adding in
monomials of projective mappings so that Condition \ref{Condition P is an
algebra} holds. The product $\ast $ induces a coproduct on the Banach
algebra $T^{\left( n\right) }\left( \mathcal{V}\right) $, and the algebraic
structure corresponds naturally to a Hopf algebra \cite%
{gubinelli2010ramification, hairer2014theory}.

We assume that $T^{\left( n\right) }\left( \mathcal{V}\right) ^{\otimes 2}$
is another Banach algebra, equipped with an admissible norm (see Definition %
\ref{Definition admissible norms}) and with a multiplication, which is a
continuous bilinear operator from $T^{\left( n\right) }\left( \mathcal{V}%
\right) ^{\otimes 2}\times T^{\left( n\right) }\left( \mathcal{V}\right)
^{\otimes 2}$ to $T^{\left( n\right) }\left( \mathcal{V}\right) ^{\otimes 2}$
satisfying%
\begin{equation*}
\left( a_{1}\otimes b_{1}\right) \left( a_{2}\otimes b_{2}\right) =\left(
a_{1}a_{2}\right) \otimes \left( b_{1}b_{2}\right) ,\forall a_{i},b_{i}\in
T^{\left( n\right) }\left( \mathcal{V}\right) \text{.}
\end{equation*}

\begin{condition}
\label{Condition g satisfies differential equation}There exists a continuous
linear mapping $\mathcal{I}$ from $T^{\left( n\right) }\left( \mathcal{V}%
\right) $ to $T^{\left( n\right) }\left( \mathcal{V}\right) ^{\otimes 2}$
satisfying%
\begin{equation}
\mathcal{I}\left( 1\right) =\mathcal{I}\left( \mathcal{V}\right) =0\text{, }%
\mathcal{I}\left( \mathcal{V}^{\otimes k}\right) \subseteq
\tsum\nolimits_{j_{1}+j_{2}=k,j_{i}\geq 1}\mathcal{V}^{\otimes j_{1}}\otimes 
\mathcal{V}^{\otimes j_{2}}\text{, }k=2,\dots ,n\text{.}
\label{property of I   II}
\end{equation}%
In addition, let $1_{n,2}$ denote the projection of $T^{\left( n\right)
}\left( \mathcal{V}\right) ^{\otimes 2}$ to $\tsum\nolimits_{j_{1}+j_{2}\leq
n,j_{i}\geq 1}\mathcal{V}^{\otimes j_{1}}\otimes \mathcal{V}^{\otimes j_{2}}$%
, then%
\begin{equation}
\mathcal{I}\left( ab\right) =\mathcal{I}\left( a\right) +1_{n,2}\left(
\left( a\otimes a\right) \mathcal{I}\left( b\right) \right) +1_{n,2}\left(
\left( a-1\right) \otimes \left( a\left( b-1\right) \right) \right) ,\text{ }%
\forall a,b\in \mathcal{G}_{n}\text{.}  \label{property of I}
\end{equation}
\end{condition}

For a potential choice of the mapping $\mathcal{I}$, if for any $g\in
C\left( \left[ 0,T\right] ,\mathcal{G}_{n}\right) $, the \textquotedblleft
\thinspace formal\thinspace \textquotedblright\ integral%
\begin{equation*}
1_{n,2}(\tiint\nolimits_{0<u_{1}<u_{2}<T}\delta g_{0,u_{1}}\otimes \delta
g_{0,u_{2}})
\end{equation*}%
is well-defined and can be represented as a universal continuous linear
function of $g_{0,T}$, then define%
\begin{equation}
\mathcal{I}\left( a\right) :=1_{n,2}\left(
\tiint\nolimits_{0<u_{1}<u_{2}<T}\delta \left( g_{0,u_{1}}\right) \otimes
\delta \left( g_{0,u_{2}}\right) \right) \text{, \ }g\in C\left( \left[ 0,T%
\right] ,\mathcal{G}_{n}\right) \text{,\ }g_{0,T}=a\text{, }\forall a\in 
\mathcal{G}_{n}\text{,}  \label{Definition of I}
\end{equation}%
which extends linearly to $T^{\left( n\right) }\left( \mathcal{V}\right) $.
By \textquotedblleft universal\textquotedblright , we mean that $\mathcal{I}$
is independent of the selection of $g$ and independent of $a$. In this
formal definition, \textquotedblleft \thinspace $\delta $\thinspace
\textquotedblright\ is comparable to the differential operator and
\textquotedblleft \thinspace $\int $\thinspace \textquotedblright\ is
comparable to the integral operator. Normally, $\left( \ref{property of I}%
\right) $ follows from $\tiint\nolimits_{s<u_{1}<u_{2}<t}=\tiint%
\nolimits_{s<u_{1}<u_{2}<u}+\tiint\nolimits_{u<u_{1}<u_{2}<t}+%
\int_{s<u_{1}<u}\int_{u<u_{2}<t}$ for $s<u<t$, and $\left( \ref{property of
I II}\right) $ holds if $\delta \left( 1\right) =0$. Yet, both $\left( \ref%
{property of I II}\right) $ and $\left( \ref{property of I}\right) $ have to
be checked rigorously for a specific choice of the group.

The existence of the mapping $\mathcal{I}$ imposes a stronger structural
assumption on the group than that is needed for the rough integration. In
rough integration, knowing how to integrate monomials against the degree-one
monomial on paths space, we know how to integrate sufficiently smooth
one-forms against the path \cite{lyons1998differential,
gubinelli2004controlling, gubinelli2010ramification}. The information needed
for rough integration is encoded in the mapping $\mathcal{I}^{\prime }$
below.

\bigskip

\noindent \textbf{Condition 25'} \ \textit{There exists a continuous linear
mapping }$\mathcal{I}^{\prime }$\textit{\ from }$T^{\left( n\right) }\left( 
\mathcal{V}\right) $\textit{\ to }$T^{\left( n\right) }\left( \mathcal{V}%
\right) ^{\otimes 2}$\textit{\ satisfying}%
\begin{equation*}
\mathcal{I}^{\prime }\left( 1\right) =\mathcal{I}^{\prime }\left( \mathcal{V}%
\right) =0\text{, }\mathcal{I}^{\prime }\left( \mathcal{V}^{\otimes
k}\right) \subseteq \mathcal{V}^{\otimes \left( k-1\right) }\otimes \mathcal{%
V}\text{, }k=2,\dots ,n\text{.}
\end{equation*}%
\textit{In addition, let }$1_{n,2}^{\prime }$\textit{\ denote the projection
of }$T^{\left( n\right) }\left( \mathcal{V}\right) ^{\otimes 2}$\textit{\ to 
}$\tsum\nolimits_{k=2}^{n}\mathcal{V}^{\otimes \left( k-1\right) }\otimes 
\mathcal{V}$\textit{, then}%
\begin{equation*}
\mathcal{I}^{\prime }\left( ab\right) =\mathcal{I}^{\prime }\left( a\right)
+1_{n,2}^{\prime }\left( \left( a\otimes a\right) \mathcal{I}^{\prime
}\left( b\right) \right) +1_{n,2}^{\prime }\left( \left( a-1\right) \otimes
\left( a\left( b-1\right) \right) \right) ,\text{ }\forall a,b\in \mathcal{G}%
_{n}\text{.}
\end{equation*}

\bigskip

The mapping $\mathcal{I}^{\prime }$ represents the formal integral $%
1_{n,2}(\tiint\nolimits_{0<u_{1}<u_{2}<T}\delta g_{0,u_{1}}\otimes \delta
x_{0,u_{2}}^{1})$ with $x^{1}:=\pi _{1}\left( g\right) $ and contains part
of the information of the mapping $\mathcal{I}$. The mapping $\mathcal{I}%
^{\prime }$\ contains the information of how to integrate monomials against
the degree-one monomial on paths space: $\tiint\nolimits_{0<u_{1}<u_{2}<T}%
\delta x_{0,u_{1}}^{k}\otimes \delta x_{0,u_{2}}^{1}$, $x^{k}:=\pi
_{k}\left( g\right) $, and $\mathcal{I}^{\prime }$ is sufficient and
necessary to define rough integration (Lemma 11 \cite{lyons2015theory}). In
the same manner, the mapping $\mathcal{I}$ encodes the integration of a
monomial against another monomial (not only the degree-one monomial): $%
\tiint\nolimits_{0<u_{1}<u_{2}<T}\delta x_{0,u_{1}}^{k}\otimes \delta
x_{0,u_{2}}^{j}$, and is sufficient and necessary to define the iterated
integration for dominated paths (Proposition \ref{Proposition enhancement})
and for controlled paths (Corollary \ref{Example weakly controlled path}).

The mapping $\mathcal{I}$ resp. $\mathcal{I}^{\prime }$ are important
because they identify algebraic properties of the group needed to define
rough integration resp. iterated integration. They can be seen as
counterparts of Chen's identity (that encodes paths evolution) in paths
integration. The mapping $\mathcal{I}$ exists for step-$n$ nilpotent Lie
group $n\geq 1$ and step-$2$ Butcher group; the mapping $\mathcal{I}^{\prime
}$ exists for step-$n$ nilpotent Lie group and step-$n$ Butcher group $n\geq
1$. In \cite{lyons2015theory}, the mapping $\mathcal{I}^{\prime }$ is
employed to define Picard iterations for rough differential equations and
prove the unique existence and continuity of the solution when the driving
path lives in step-$n$ nilpotent Lie group or step-$n$ Butcher group for $%
n\geq 1$.

In Section \ref{Section stableness of dominated paths}, we prove that
dominated paths are stable under $\left( 1\right) $ iterated integration, $%
\left( 2\right) \ $multiplication, $\left( 3\right) \ $composition with
regular functions, and $\left( 4\right) \ $is a transitive property.
Condition \ref{Condition smallest Banach space} is used in all four
properties; Condition \ref{Condition P is an algebra} is used in $\left(
2\right) $ and $\left( 3\right) $; Condition \ref{Condition g satisfies
differential equation} is used in $\left( 1\right) $ and $\left( 4\right) $.

\subsubsection{Example: nilpotent Lie group}

\label{Section structural property nilpotent Lie group}Conditions \ref%
{Condition smallest Banach space}, \ref{Condition P is an algebra} and \ref%
{Condition g satisfies differential equation} hold when $\mathcal{G}_{n}$ is
the step-$n$ nilpotent Lie group over Banach space $\mathcal{V}$.

For Condition \ref{Condition smallest Banach space}, suppose $\alpha \in
L(T^{\left( n\right) }\left( \mathcal{V}\right) ,\mathcal{U)}$ satisfies $%
\alpha \left( g\right) =0$, $\forall g\in \mathcal{G}_{n}$. Then for $%
k=1,\dots ,n$ and $v_{i}\in \mathcal{V}$, $i=1,\dots ,k$, by considering the
finite-dimensional space spanned by $\left\{ v_{i}\right\} _{i=1}^{k}$ and
by applying Poincar\'{e}-Birkhoff-Witt theorem, we have $\alpha \left(
v_{1}\otimes \cdots \otimes v_{k}\right) =0$, $\forall \left\{ v_{i}\right\}
_{i=1}^{k}\subset \mathcal{V}$, which implies $\alpha \left( v\right) =0$, $%
\forall v\in \mathcal{V}^{\otimes k}$ (since $\mathcal{V}^{\otimes k}$ is
the closure of the linear span of $\left\{ v_{1}\otimes \cdots \otimes
v_{k}|v_{i}\in \mathcal{V},i=1,\dots ,k\right\} $ and $\alpha $ is a
continuous linear mapping).

Condition \ref{Condition P is an algebra} is satisfied by using the shuffle
product (p36 \cite{lyons2007differential}). Indeed, for $\left(
k_{1},k_{2},\dots ,k_{l}\right) \in \left\{ 1,\dots ,n\right\} ^{l}$, 
\begin{equation*}
\pi _{k_{1}}\left( a\right) \otimes \pi _{k_{2}}\left( a\right) \otimes
\cdots \otimes \pi _{k_{l}}\left( a\right) =\tsum\nolimits_{\varrho \in 
\text{Shuffles}\left( k_{1},k_{2},\dots ,k_{l}\right) }\rho \left( \pi
_{k_{1}+k_{2}+\cdots +k_{l}}\left( a\right) \right) \text{,}
\end{equation*}%
where $\rho \in $Shuffles$\left( k_{1},k_{2},\dots ,k_{l}\right) $ induces a
continuous linear mapping from $\mathcal{V}^{\otimes \left( k_{1}+\cdots
+k_{l}\right) }$ to $\mathcal{V}^{\otimes k_{1}}\otimes \cdots \otimes 
\mathcal{V}^{\otimes k_{l}}$.

For Condition \ref{Condition g satisfies differential equation}, if we
assume that any $g\in C\left( \left[ 0,T\right] ,\mathcal{G}_{n}\right) $
satisfies the formal differential equations: $\delta \left( \pi _{k}\left(
g_{0,t}\right) \right) =\pi _{k-1}\left( g_{0,t}\right) \otimes \delta x_{t}$%
, $\forall t\in \left[ 0,T\right] $, $k=1,\dots ,n$, with $x:=\pi _{1}\left(
g\right) $, then%
\begin{eqnarray*}
&&1_{n,2}\left( \tiint\nolimits_{0<u_{1}<u_{2}<T}\delta \left(
g_{0,u_{1}}\right) \otimes \delta \left( g_{0,u_{2}}\right) \right) \\
&=&1_{n,2}\left( \tint\nolimits_{0}^{T}\left( g_{0,u}-1\right) \otimes
\delta \left( g_{0,u}\right) \right) =\tsum\nolimits_{k_{1}+k_{2}\leq
n,k_{i}\geq 1}\tint\nolimits_{0}^{T}\pi _{k_{1}}\left( g_{0,u}\right)
\otimes \delta \pi _{k_{2}}\left( g_{0,u}\right) \\
&=&\tsum\nolimits_{k_{1}+k_{2}\leq n,k_{i}\geq 1}\tint\nolimits_{0}^{T}\pi
_{k_{1}}\left( g_{0,u}\right) \otimes \pi _{k_{2}-1}\left( g_{0,u}\right)
\otimes \delta x_{u} \\
&=&\tsum\nolimits_{k_{1}+k_{2}\leq n,k_{i}\geq 1}\tsum\nolimits_{\varrho \in 
\text{Shuffles}\left( k_{1},k_{2}-1\right) }\left( \varrho ,1\right) \left(
\pi _{k_{1}+k_{2}}\left( g_{0,T}\right) \right) \text{.}
\end{eqnarray*}%
For $\varrho \in $Shuffles$\left( k_{1},k_{2}-1\right) $, $\left( \varrho
,1\right) $ denotes the continuous linear mapping from $\mathcal{V}^{\otimes
\left( k_{1}+k_{2}\right) }$ to $\mathcal{V}^{\otimes k_{1}}\otimes \mathcal{%
V}^{\otimes k_{2}}$ induced by $\left( \rho ,1\right) $ which is an element
of the symmetric group of order $\left( k_{1}+k_{2}\right) $ whose first $%
\left( k_{1}+k_{2}-1\right) $ elements coincide with $\rho $ with the last
element unchanged. Then using $\left( \ref{Definition of I}\right) $, we
obtain that%
\begin{equation}
\mathcal{I}\left( v\right) :=\tsum\nolimits_{k_{1}+k_{2}\leq n,k_{i}\geq
1}\tsum\nolimits_{\varrho \in \text{Shuffles}\left( k_{1},k_{2}-1\right)
}\left( \varrho ,1\right) \left( \pi _{k_{1}+k_{2}}\left( v\right) \right) 
\text{, }\forall v\in T^{\left( n\right) }\left( \mathcal{V}\right) \text{,}
\label{definition of I for nilpotent Lie group}
\end{equation}%
which is a universal continuous linear mapping. Using $\left( \varrho
,1\right) \in L(\mathcal{V}^{\otimes \left( k_{1}+k_{2}\right) },\mathcal{V}%
^{\otimes k_{1}}\otimes \mathcal{V}^{\otimes k_{2}})$, $\forall k_{i}\geq 1$%
, $k_{1}+k_{2}\leq n$, the mapping $\mathcal{I}$ satisfies $\left( \ref%
{property of I II}\right) $. Then we check that $\mathcal{I}$ satisfies $%
\left( \ref{property of I}\right) $. Since $\mathcal{G}_{n}$ is a closed
topological group in $T^{(n)}(\mathcal{V})$ and $T^{(n)}(\mathcal{V})$ is
the closure of the linear span of $\left\{ v_{1}\otimes \cdots \otimes
v_{k}|v_{i}\in \mathcal{V},k=1,\dots ,n\right\} $, for any $a,b\in \mathcal{G%
}_{n}$, there exist $v_{i}\in \mathcal{V}$, $i\geq 1$, and $a_{m},b_{m}\in 
\mathcal{G}_{n}(\limfunc{span}(\{v_{i}\}_{i=1}^{m}))$, $m\geq 1$, such that $%
\lim_{m\rightarrow \infty }a_{m}=a$ and $\lim_{m\rightarrow \infty }b_{m}=b$%
. If we prove that $\left( \ref{property of I}\right) $ holds for $a_{m}$
and $b_{m}$ for any $m\geq 1$, then by using continuity we can prove that $%
\left( \ref{property of I}\right) $ holds for $a$ and $b$. For this, fix $%
m\geq 1$, we treat $\{v_{i}\}_{i=1}^{m}$ as a basis of an $m$-dimensional
space. Then by using Chow-Rashevskii connectivity Theorem, there exist two
continuous bounded variation paths $x_{m}$ and $y_{m}$ on $\left[ 0,1\right] 
$ taking values in $\limfunc{span}(\{v_{i}\}_{i=1}^{m})$ such that $%
S_{n}\left( x_{m}\right) _{0,1}=a_{m}$ and $S_{n}\left( y_{m}\right)
_{0,1}=b_{m}$. Then $S_{n}\left( x_{m}\right) _{0,\cdot }$ and $S_{n}\left(
y_{m}\right) _{0,\cdot }$ are two differentiable paths taking values in $%
\mathcal{G}_{n}(\limfunc{span}(\{v_{i}\}_{i=1}^{m}))$ that satisfy%
\begin{eqnarray*}
\mathcal{I}\left( a_{m}\right) &=&1_{n,2}\left(
\tiint\nolimits_{0<u_{1}<u_{2}<1}dS_{n}\left( x_{m}\right) _{0,u_{1}}\otimes
dS_{n}\left( x_{m}\right) _{0,u_{2}}\right) \text{,} \\
\mathcal{I}\left( b_{m}\right) &=&1_{n,2}\left(
\tiint\nolimits_{0<u_{1}<u_{2}<1}dS_{n}\left( y_{m}\right) _{0,u_{1}}\otimes
dS_{n}\left( y_{m}\right) _{0,u_{2}}\right) \text{.}
\end{eqnarray*}%
Let $x_{m}\sqcup y_{m}:\left[ 0,2\right] \rightarrow \limfunc{span}%
(\{v_{i}\}_{i=1}^{m})$ denote the concatenation of $x_{m}$ and $y_{m}$. Then
by using Chen's identity, we have $S_{n}\left( x_{m}\sqcup y_{m}\right)
_{0,2}=a_{m}b_{m}$. By using the definition of the mapping $\mathcal{I}$, we
have 
\begin{eqnarray*}
\mathcal{I}\left( a_{m}b_{m}\right) &=&1_{n,2}\left(
\tiint\nolimits_{0<u_{1}<u_{2}<2}dS_{n}\left( x_{m}\sqcup y_{m}\right)
_{0,u_{1}}\otimes dS_{n}\left( x_{m}\sqcup y_{m}\right) _{0,u_{2}}\right) \\
&=&1_{n,2}\left(
\tiint\nolimits_{0<u_{1}<u_{2}<1}+\tiint\nolimits_{1<u_{1}<u_{2}<2}+\tint%
\nolimits_{0<u_{1}<1}\tint\nolimits_{1<u_{2}<2}\right) \\
&=&\mathcal{I}\left( a_{m}\right) +1_{n,2}\left( \left( a_{m}\otimes
a_{m}\right) \mathcal{I}\left( b_{m}\right) \right) +1_{n,2}\left( \left(
a_{m}-1\right) \otimes \left( a_{m}\left( b_{m}-1\right) \right) \right) 
\text{,}
\end{eqnarray*}%
so $\left( \ref{property of I}\right) $ holds for $a_{m}$ and $b_{m}$.

\subsubsection{Example: Butcher group}

\label{Section structural assumption Butcher group}Conditions \ref{Condition
smallest Banach space} and \ref{Condition P is an algebra} hold when $%
\mathcal{G}_{n}$ is the step-$n$ Butcher group over $\mathcal{%
\mathbb{R}
}^{d}$. Condition \ref{Condition g satisfies differential equation} holds
when $n=2$. For $n\geq 3$, it is hard to construct the mapping $\mathcal{I}$
in Condition \ref{Condition g satisfies differential equation}, but
Condition 25' holds and the mapping $\mathcal{I}^{\prime }$ exists. The
mapping $\mathcal{I}^{\prime }$ encodes the integration of monomials against
the degree-one monomial on paths space, and that is sufficient and necessary
to define the rough integration \cite{lyons1998differential,
gubinelli2004controlling, gubinelli2010ramification}.

Condition \ref{Condition smallest Banach space} holds for similar reasons as
for the nilpotent Lie group.

For labelled forests $\sigma _{i}\in \mathcal{P}_{n}$, $i=1,\dots ,k$,
satisfying $\left\vert \sigma _{1}\right\vert +\cdots +\left\vert \sigma
_{k}\right\vert \leq n$, $\left( \sigma _{1}\cdots \sigma _{k}\right) $ is
again an labelled forest of degree less or equal to $n$, so $\left( \sigma
_{1}\cdots \sigma _{k}\right) \in \mathcal{P}_{n}$ and $\sigma _{1}\left(
a\right) \cdots \sigma _{k}\left( a\right) =(\sigma ^{1}\cdots \sigma
^{k})\left( a\right) $ for any $a\in \mathcal{G}_{n}$ based on $\left( \ref%
{relationship satisfied by elements in Butcher group}\right) $. Hence,
Condition \ref{Condition P is an algebra} holds.

When $n=2$, we can prove that Condition \ref{Condition g satisfies
differential equation} holds. $\mathcal{P}_{2}$ is the set of labelled
forests with degree less or equal to $2$, i.e. $\mathcal{P}_{2}=\{\Lcdot_{i},%
\Lcdot_{i}\Lcdot_{j},\raisebox{0.0012in}{$|$}\hspace{-0.0572in}_{\Lcdot i}^{%
\Lcdot j}\left\vert i,j\right. \in \left\{ 1,\dots ,d\right\} \}$. The
property of $\mathcal{I}$ in $\left( \ref{property of I}\right) $ reduces to
($\{e_{i}\}_{i=1}^{d}$ a basis of $%
\mathbb{R}
^{d}$) 
\begin{equation}
\mathcal{I}\left( ab\right) =\mathcal{I}\left( a\right) +\mathcal{I}\left(
b\right) +\tsum\nolimits_{i,j=1}^{d}\left( \Lcdot_{j}\left( a\right) \right)
e_{j}\otimes \left( \Lcdot_{i}\left( b\right) \right) e_{i}\text{, }\forall
a,b\in \mathcal{G}_{2}\text{.}  \label{Butcher group p<3}
\end{equation}%
Then, let $\mathcal{I}\left( a\right) :=\sum_{i,j=1}^{d}%
\raisebox{0.0012in}{$|$}\hspace{-0.0572in}_{\Lcdot i}^{\Lcdot j}\left(
a\right) e_{j}\otimes e_{i}$, that is a universal continuous linear mapping
from $T^{(2)}(%
\mathbb{R}
^{d})$ to $%
\mathbb{R}
^{d}\otimes 
\mathbb{R}
^{d}$. Then the property $\left( \ref{Butcher group p<3}\right) $ holds
because, based on the rule of multiplication in the Butcher group, $%
\raisebox{0.0012in}{$|$}\hspace{-0.0572in}_{\Lcdot i}^{\Lcdot j}\left(
ab\right) =\raisebox{0.0012in}{$|$}\hspace{-0.0572in}_{\Lcdot i}^{\Lcdot %
j}\left( a\right) +\raisebox{0.0012in}{$|$}\hspace{-0.0572in}_{\Lcdot i}^{%
\Lcdot j}\left( b\right) +\left( \Lcdot_{j}\left( a\right) \right) \left( %
\Lcdot_{i}\left( b\right) \right) $, $\forall a,b\in \mathcal{G}_{2}$.
Equivalently, we could assume that any $g\in C\left( \left[ 0,T\right] ,%
\mathcal{G}_{2}\right) $ satisfies the formal differential equation $\delta (%
\raisebox{0.0012in}{$|$}\hspace{-0.0572in}_{\Lcdot i}^{\Lcdot j}\left(
g_{0,t}\right) )=(\Lcdot_{j}\left( g_{0,t}\right) )\delta x_{t}^{i}$ with $%
x_{t}^{i}:=\Lcdot_{i}\left( g_{0,t}\right) $. Then, for $a\in \mathcal{G}%
_{2} $ and $g\in C\left( \left[ 0,T\right] ,\mathcal{G}_{2}\right) $
satisfying $g_{0,T}=a$, we have%
\begin{eqnarray}
\mathcal{I}\left( a\right) &=&1_{n,2}\left(
\tiint\nolimits_{0<u_{1}<u_{2}<T}\delta \left( g_{0,u_{1}}\right) \otimes
\delta \left( g_{0,u_{2}}\right) \right)
\label{definition of the mapping I for Butcher group 1} \\
&=&\tsum\nolimits_{i,j=1}^{d}\tint\nolimits_{0}^{T}\left( \Lcdot_{j}\left(
g_{0,u}\right) \right) \delta x_{t}^{i}\,e_{j}\otimes
e_{i}=\tsum\nolimits_{i,j=1}^{d}\raisebox{0.0012in}{$|$}\hspace{-0.0572in}_{%
\Lcdot i}^{\Lcdot j}\left( g_{0,T}\right) e_{j}\otimes
e_{i}=\tsum\nolimits_{i,j=1}^{d}\raisebox{0.0012in}{$|$}\hspace{-0.0572in}_{%
\Lcdot i}^{\Lcdot j}\left( a\right) e_{j}\otimes e_{i}\text{.}  \notag
\end{eqnarray}%
The existence of $\mathcal{I}$ when $n=2$ gives an explanation to the
existence of the canonical enhancement of a controlled path when $2\leq p<3$
(see Theorem $1$ \cite{gubinelli2004controlling} and Corollary \ref{Example
weakly controlled path} below).

For $n\geq 3$, the problem is complicated and finding a mapping $\mathcal{I}$
satisfying Condition \ref{Condition g satisfies differential equation} is
difficult. Indeed, for $g\in C\left( \left[ 0,T\right] ,\mathcal{G}%
_{3}\right) $, it is hard to represent%
\begin{equation*}
\tint\nolimits_{0}^{T}\left( \Lcdot_{k}\left( g_{0,t}\right) \right) \delta
\left( (\Lcdot_{i}\Lcdot_{j})\left( g_{0,t}\right) \right) \text{, }\left(
i,j,k\right) \in \left\{ 1,\dots ,d\right\} ^{3}\text{,}
\end{equation*}%
as a linear functional of $g_{0,T}$, because the integration by parts
formula does not hold. For $n\geq 3$, Condition \ref{Condition g satisfies
differential equation}' holds. If we assume (as in Theorem 8.5 \cite%
{gubinelli2010ramification}) that any $g\in C\left( \left[ 0,T\right] ,%
\mathcal{G}_{n}\right) $ satisfies the formal differential equations that,
for any labelled forest $\sigma \in \mathcal{P}_{n-1}$, 
\begin{equation*}
\delta \left[ \sigma \right] _{i}\left( g_{0,t}\right) =\sigma \left(
g_{0,t}\right) \delta x_{t}^{i}\text{, \ }\forall t\in \left[ 0,T\right] 
\text{, with }x_{t}^{i}:=\Lcdot_{i}\left( g_{0,t}\right) \text{,}
\end{equation*}%
where $[\sigma ]_{i}$ denotes the labelled tree obtained by attaching $%
\sigma $ to a new root with label $i$. Then, we can define $\mathcal{I}%
^{\prime }\in L(T^{(n)}(%
\mathbb{R}
^{d}),T^{(n)}(%
\mathbb{R}
^{d})^{\otimes 2})$ by, for any $a\in \mathcal{G}_{n}$ and $g\in C\left( %
\left[ 0,T\right] ,\mathcal{G}_{n}\right) $ satisfying $g_{0,T}=a$, 
\begin{eqnarray}
\mathcal{I}^{\prime }\left( a\right) &:&=1_{n,2}\left(
\tint\nolimits_{0}^{T}\left( g_{0,u}-1\right) \otimes \delta x_{u}\right) 
\notag \\
&=&\tsum\nolimits_{i=1}^{d}\tsum\nolimits_{\sigma \in \mathcal{P}%
_{n},|\sigma |=1,\dots ,n-1}\tint\nolimits_{0}^{T}\sigma \left(
g_{0,u}\right) \delta x_{u}^{i}\,e_{\sigma }\otimes e_{i}  \notag \\
&=&\tsum\nolimits_{i=1}^{d}\tsum\nolimits_{\sigma \in \mathcal{P}%
_{n},|\sigma |=1,\dots ,n-1}\left[ \sigma \right] _{i}\left( a\right)
e_{\sigma }\otimes e_{i}\text{,}
\label{definition of the mapping I' for Butcher group 2}
\end{eqnarray}%
where $e_{\sigma }\in (\mathcal{%
\mathbb{R}
}^{d})^{\otimes |\sigma |}$ is the basis coordinate corresponding to $\sigma 
$ and $e_{\sigma }\otimes e_{i}$ is treated as an element in $(\mathcal{%
\mathbb{R}
}^{d})^{\otimes |\sigma |}\otimes \mathcal{%
\mathbb{R}
}^{d}\subset T^{(n)}(%
\mathbb{R}
^{d})^{\otimes 2}$. Hence, $\mathcal{I}^{\prime }$ is a universal continuous
linear mapping from $T^{(n)}(%
\mathbb{R}
^{d})$ to $T^{(n)}(%
\mathbb{R}
^{d})^{\otimes 2}$ that satisfies $\mathcal{I}^{\prime }\left( 1\right) =%
\mathcal{I}^{\prime }\left( 
\mathbb{R}
^{d}\right) =0$, $\mathcal{I}^{\prime }\left( (%
\mathbb{R}
^{d})^{\otimes k}\right) \subseteq (%
\mathbb{R}
^{d})^{\otimes \left( k-1\right) }\otimes 
\mathbb{R}
^{d}$, $k=2,\dots ,n$. Moreover, by using the multiplication in Butcher
group, we have, for $a,b\in \mathcal{G}_{n}$ and $\sigma \in \mathcal{P}%
_{n-1}$ satisfying $\bigtriangleup \sigma =\tsum\nolimits_{j}\sigma
^{1,j}\otimes \sigma ^{2,j}$,%
\begin{equation*}
\left[ \sigma \right] _{i}\left( ab\right) =\left[ \sigma \right] _{i}\left(
a\right) +\tsum\nolimits_{j}\sigma ^{1,j}\left( a\right) \left[ \sigma ^{2,j}%
\right] _{i}\left( b\right) \text{, }\forall a,b\in \mathcal{G}_{n}\text{.}
\end{equation*}%
This implies that,%
\begin{eqnarray}
&&\tsum\nolimits_{\sigma \in \mathcal{P}_{n},|\sigma |=0,\dots ,n-1}\left[
\sigma \right] _{i}\left( ab\right) e_{\sigma }\otimes e_{i}
\label{property of I' for Butcher group} \\
&=&\tsum\nolimits_{\sigma \in \mathcal{P}_{n},|\sigma |=0,\dots ,n-1}\left[
\sigma \right] _{i}\left( a\right) e_{\sigma }\otimes e_{i}+\left( a\otimes
1\right) \left( \tsum\nolimits_{\sigma \in \mathcal{P}_{n},|\sigma |=0,\dots
,n-1}\left[ \sigma \right] _{i}\left( b\right) e_{\sigma }\otimes
e_{i}\right) \text{,}  \notag
\end{eqnarray}%
where $e_{\sigma }\otimes e_{i}$ is treated as an element in $(\mathcal{%
\mathbb{R}
}^{d})^{\otimes |\sigma |}\otimes \mathcal{%
\mathbb{R}
}^{d}\subset T^{(n)}(%
\mathbb{R}
^{d})^{\otimes 2}$. Hence, if we let $1_{n,2}^{\prime }$ denote the
projection of $T^{(n)}(%
\mathbb{R}
^{d})^{\otimes 2}$ to $\tsum\nolimits_{k=2}^{n}(%
\mathbb{R}
^{d})^{\otimes \left( k-1\right) }\otimes 
\mathbb{R}
^{d}$, then based on $\left( \ref{definition of the mapping I' for Butcher
group 2}\right) $ and $\left( \ref{property of I' for Butcher group}\right) $
we have%
\begin{eqnarray*}
\mathcal{I}^{\prime }\left( ab\right) &=&\mathcal{I}^{\prime }\left(
a\right) +1_{n,2}^{\prime }\left( \left( a\otimes 1\right) \mathcal{I}%
^{\prime }\left( b\right) \right) +1_{n,2}^{\prime }\left( \left( a-1\right)
\otimes \left( \Lcdot\left( b\right) \right) \right) \\
&=&\mathcal{I}^{\prime }\left( a\right) +1_{n,2}^{\prime }\left( \left(
a\otimes a\right) \mathcal{I}^{\prime }\left( b\right) \right)
+1_{n,2}^{\prime }\left( \left( a-1\right) \otimes \left( a\left( b-1\right)
\right) \right) \text{, }\forall a,b\in \mathcal{G}_{n}\text{,}
\end{eqnarray*}%
where the term $1_{n,2}^{\prime }\left( \left( a-1\right) \otimes \left(
a\left( b-1\right) \right) \right) $ is caused by the different ranges of
summation of $\sigma \in \mathcal{P}_{n}$ in $\left( \ref{definition of the
mapping I' for Butcher group 2}\right) $ and $\left( \ref{property of I' for
Butcher group}\right) $.

It is hard to find a mapping $\mathcal{I}$ for step-$n$ Butcher group $n\geq
3$, because it is hard to differentiate a path taking values in the group.
Unlike the nilpotent Lie group where any group-valued path $g$ satisfies the
`formal' differential equation $\delta g=g\delta x$ with $x:=\pi _{1}\left(
g\right) $, in this case it is hard to find a continuous mapping $F$ such
that any path taking values in step-$n$ Butcher group $n\geq 3$ satisfies $%
\delta g=F\left( g\right) \delta x$. The mapping $\mathcal{I}$ exists for
step-$2$ Butcher group because we only need to define $\int x\delta x$ and
to integrate against the degree-one monomial. For step-$n$ Butcher group $%
n\geq 3$, to define the mapping $\mathcal{I}$, we need to define the
differentiation of products that may easily drift out of the group as in the
case of It\^{o} differential equations.

\subsection{Definition of dominated paths}

\begin{notation}
\label{Notation operator norm}Let $\mathcal{U}$ be a Banach space and $%
\alpha \in L\left( T^{\left( n\right) }\left( \mathcal{V}\right) ,\mathcal{U}%
\right) $. We denote 
\begin{eqnarray*}
\left\Vert \alpha \left( \cdot \right) \right\Vert &:&=\sup_{v\in T^{\left(
n\right) }\left( \mathcal{V}\right) ,\left\Vert v\right\Vert =1}\left\Vert
a\left( v\right) \right\Vert \text{,} \\
\left\Vert \alpha \left( \cdot \right) \right\Vert _{k} &:&=\sup_{v\in 
\mathcal{V}^{\otimes k},\left\Vert v\right\Vert =1}\left\Vert a\left(
v\right) \right\Vert \text{, }k=0,1,\dots ,n\text{.}
\end{eqnarray*}
\end{notation}

With $\left[ p\right] $ we denote the largest integer that is less or equal
to $p\geq 1$. We work with the triple $(T^{(\left[ p\right] )}(\mathcal{V)},%
\mathcal{G}_{\left[ p\right] },\mathcal{P}_{\left[ p\right] })$ and
continuous paths of finite $p$-variation taking values in $\mathcal{G}_{%
\left[ p\right] }$ i.e. $C^{p-var}\left( \left[ 0,T\right] ,\mathcal{G}_{%
\left[ p\right] }\right) $.

\begin{condition}[Slowly-Varying Condition]
\label{Condition of integrable beta}Suppose that $(T^{(\left[ p\right] )}(%
\mathcal{V)},\mathcal{G}_{\left[ p\right] },\mathcal{P}_{\left[ p\right] })$
satisfies Conditions \ref{Condition smallest Banach space}, \ref{Condition P
is an algebra} and \ref{Condition g satisfies differential equation}, $g\in
C^{p-var}\left( \left[ 0,T\right] ,\mathcal{G}_{\left[ p\right] }\right) $
and $\mathcal{U}$ is a Banach space.\ We say $\beta \in C\left( \left[ 0,T%
\right] ,B\left( \mathcal{G}_{\left[ p\right] },\mathcal{U}\right) \right) $
is slowly-varying, if there exist $M>0$, control $\omega $ and $\theta >1$
such that 
\begin{gather*}
\left\Vert \beta _{t}\left( g_{t},\cdot \right) \right\Vert \leq M\text{, }%
\forall t\in \left[ 0,T\right] \text{,} \\
\left\Vert (\beta _{t}-\beta _{s})\left( g_{t},\cdot \right) \right\Vert
_{k}\leq \omega \left( s,t\right) ^{\theta -\frac{k}{p}}\text{, }\forall
0\leq s<t\leq T\text{, }k=1,2,\dots ,\left[ p\right] \text{.}
\end{gather*}%
We define the operator norm of $\beta $ by 
\begin{equation}
\left\Vert \beta \right\Vert _{\theta }^{\omega }:=\sup_{t\in \left[ 0,T%
\right] }\left\Vert \beta _{t}\left( g_{t},\cdot \right) \right\Vert
+\max_{k=1,\dots ,\left[ p\right] }\sup_{0\leq s\leq t\leq T}\frac{%
\left\Vert \left( \beta _{t}-\beta _{s}\right) \left( g_{t},\cdot \right)
\right\Vert }{\omega \left( s,t\right) ^{\theta -\frac{k}{p}}}\text{.}
\label{Definition of operator norm}
\end{equation}
\end{condition}

The norm $\left\Vert \cdot \right\Vert _{\theta }^{\omega }$ is used in \cite%
{lyons2015theory} to quantify the convergence of one-forms associated with
Picard iterations for rough differential equations.

If $\beta $ satisfies the slowly-varying condition for $g$, then $\left(
\beta ,g\right) $ satisfies the integrable condition (Condition \ref%
{Condition integrable condition}). Indeed, for $s<u<t$,%
\begin{eqnarray*}
\left\Vert \left( \beta _{u}-\beta _{s}\right) \left( g_{u},g_{u,t}\right)
\right\Vert &\leq &\tsum\nolimits_{\sigma \in \mathcal{P}_{\left[ p\right]
}}\left\Vert (\beta _{u}-\beta _{s})\left( g_{u},\sigma \left(
g_{u,t}\right) \right) \right\Vert \\
&\leq &\tsum\nolimits_{\sigma \in \mathcal{P}_{\left[ p\right] }}\left\Vert
(\beta _{u}-\beta _{s})\left( g_{u},\cdot \right) \right\Vert _{\left\vert
\sigma \right\vert }\left\Vert \sigma \left( g_{u,t}\right) \right\Vert \\
&\leq &\tsum\nolimits_{\sigma \in \mathcal{P}_{\left[ p\right] }}\omega
\left( s,u\right) ^{\theta -\frac{\left\vert \sigma \right\vert }{p}%
}\left\Vert g\right\Vert _{p-var,\left[ u,t\right] }^{\left\vert \sigma
\right\vert }\leq C_{p}(\omega \left( s,t\right) +\left\Vert g\right\Vert
_{p-var,\left[ s,t\right] }^{p})^{\theta }\text{.}
\end{eqnarray*}

\begin{definition}[Dominated Paths]
\label{Definition dominated path}Suppose $g\in C^{p-var}\left( \left[ 0,T%
\right] ,\mathcal{G}_{\left[ p\right] }\right) $, and for a Banach space $%
\mathcal{U}$, $h\in C\left( \left[ 0,T\right] ,\mathcal{U}\right) $. If
there exists $\beta \in C\left( \left[ 0,T\right] ,B\left( \mathcal{G}_{%
\left[ p\right] },\mathcal{U}\right) \right) $ that is slowly-varying and
satisfies%
\begin{equation}
h_{t}=h_{0}+\int_{0}^{t}\beta _{u}\left( g_{u}\right) dg_{u}\text{, }\forall
t\in \left[ 0,T\right] \text{.}  \label{definition of dominated path}
\end{equation}%
Then we call $h$ a path dominated by $g$ with the one-form\ $\beta $, and
refer to the process of starting with the path $h$ and fixing a choice of
the one-form $\beta $ as coupling $h$ to $g$ via $\beta $ to make a
dominated path.
\end{definition}

Using Theorem \ref{Theorem integrating slow varying cocyclic one forms}, for
control $\hat{\omega}:=\omega +\left\Vert g\right\Vert _{p-var}^{p}$ and $%
\theta >1$, we have%
\begin{equation}
\left\Vert h_{t}-h_{s}-\beta _{s}\left( g_{s},g_{s,t}\right) \right\Vert
\leq C_{p,\theta ,\hat{\omega}\left( 0,T\right) }\left\Vert \beta
\right\Vert _{\theta }^{\omega }\hat{\omega}\left( s,t\right) ^{\theta }%
\text{, }\forall s<t\text{,}
\label{estimate of integral in terms of one-forms}
\end{equation}%
and the function $\left( \beta ,\left\Vert \cdot \right\Vert _{\theta
}^{\omega }\right) \mapsto (h,\left\Vert \cdot \right\Vert _{p-var,\left[ 0,T%
\right] })$ is a Lipschitz function:%
\begin{equation}
\left\Vert h\right\Vert _{p-var,\left[ 0,T\right] }\leq C_{p,\theta ,\hat{%
\omega}\left( 0,T\right) }\left\Vert \beta \right\Vert _{\theta }^{\omega }%
\text{.}  \label{Lipschitz continuity of the integral in terms of one-forms}
\end{equation}

\subsection{Weakly controlled paths and dominated paths}

\label{Subsection definition of dominated path}

Dominated paths bear some similarities to controlled paths \cite%
{gubinelli2004controlling, gubinelli2010ramification}. We quote the
definition of $\kappa $-weakly controlled paths in Def 8.1 \cite%
{gubinelli2010ramification}:

\begin{definition}[Weakly Controlled Paths, Gubinelli]
\label{Definition weakly controlled path Gubinelli}Let $X$ be a $\gamma $%
-BRP and let $n$ be the largest integer such that $n\gamma \leq 1$. For $%
\kappa \in (1/\left( n+1\right) ,\gamma ]$ a path $y$ is a $\kappa $-weakly
controlled by $X$ with values in $V$ if there exist paths $\{y^{\tau }\in 
\mathcal{C}_{1}^{\left\vert \tau \right\vert \kappa }\left( V\right) :\tau
\in \mathcal{F}_{\mathcal{L}}^{n-1}\}$ and remainders $\{y^{\#}\in \mathcal{C%
}_{2}^{n\kappa }\left( V\right) ,y^{\#,\tau }\in \mathcal{C}_{2}^{\left(
n-\left\vert \tau \right\vert \right) \kappa }\left( V\right) ,\tau \in 
\mathcal{F}_{\mathcal{L}}^{n-1}\}$ such that%
\begin{equation}
\delta y=\tsum\nolimits_{\tau \in \mathcal{F}_{\mathcal{L}}^{n-1}}X^{\tau
}y^{\tau }+y^{\#}  \label{Gubinelli weakly controlled path 1}
\end{equation}%
and for $\tau \in \mathcal{F}_{\mathcal{L}}^{n-1}$:%
\begin{equation}
\delta y^{\tau }=\tsum\nolimits_{\sigma \in \mathcal{F}_{\mathcal{L}%
}^{n-1}}\tsum\nolimits_{\rho }c^{\prime }\left( \sigma ,\tau ,\rho \right)
X^{\rho }y^{\sigma }+y^{\tau ,\#}  \label{Gubinelli weakly controlled path 2}
\end{equation}%
where we mean $\delta y^{\tau }=y^{\tau ,\#}$ when $\left\vert \tau
\right\vert =n-1$.
\end{definition}

Note that in a convenient abuse of language, although it is the coupling $y^{\tau }$ of $y$ to $X$ %
that is the weakly controlled path, it is customary to write as if the symbol $y$ alone was the weakly controlled path! It is a good
cautionary excercise to give examples of a non zero coupling of the zero path to $X$.
  
Translated to our language, Definition \ref{Definition weakly controlled
path Gubinelli} can be rewritten as follows. For $p\geq 1$, suppose $%
\mathcal{G}_{\left[ p\right] }$ is the step-$\left[ p\right] $ Butcher group
over $%
\mathbb{R}
^{d}$, $\mathcal{P}_{\left[ p\right] -1}$ denotes the set of labelled
forests of degree less or equal to $\left[ p\right] -1$ and $g\in
C^{p-var}\left( \left[ 0,T\right] ,\mathcal{G}_{\left[ p\right] }\right) $.
Then for Banach space $\mathcal{U}$, $\gamma \in C\left( \left[ 0,T\right] ,%
\mathcal{U}\right) $ is a path controlled by $g$, if there exist a family of
paths $\gamma ^{\sigma }\in C(\left[ 0,T\right] ,\mathcal{U))}$ indexed by $%
\sigma \in \mathcal{P}_{\left[ p\right] -1}$, $\left\vert \sigma \right\vert
\geq 1$, and constants $\theta >1$, $C>0$, such that, $\gamma $ satisfies 
\begin{equation}
\left\Vert \gamma _{t}-\gamma _{s}-\tsum\nolimits_{\sigma \in \mathcal{P}_{%
\left[ p\right] -1},\left\vert \sigma \right\vert \geq 1}\gamma _{s}^{\sigma
}\sigma \left( g_{s,t}\right) \right\Vert \leq C(\left\Vert g\right\Vert
_{p-var,\left[ s,t\right] }^{p})^{\theta -\frac{1}{p}}\text{, }\forall 0\leq
s<t\leq T\text{,}  \label{condition increment of weakly controlled path}
\end{equation}%
and $\gamma ^{\sigma }$, $\sigma \in \mathcal{P}_{\left[ p\right] -1}$, $%
\left\vert \sigma \right\vert \geq 1$, satisfies 
\begin{equation}
\left\Vert \gamma _{t}^{\sigma }-\gamma _{s}^{\sigma
}-\tsum\nolimits_{\sigma _{i}\in \mathcal{P}_{\left[ p\right] -1},\left\vert
\sigma _{i}\right\vert \geq 1}c^{\prime }\left( \sigma _{1},\sigma
_{2},\sigma \right) \gamma _{s}^{\sigma _{1}}\sigma _{2}\left(
g_{s,t}\right) \right\Vert \leq C(\left\Vert g\right\Vert _{p-var,\left[ s,t%
\right] }^{p})^{\theta -\frac{1+\left\vert \sigma \right\vert }{p}}\text{, }%
\forall 0\leq s<t\leq T\text{, }
\label{condition time-varying of weakly controlled path}
\end{equation}%
where $c^{\prime }\left( \sigma _{1},\sigma _{2},\sigma \right) $ counts the
number of $\sigma _{2}\otimes \sigma $ in the reduced comultiplication $%
\bigtriangleup ^{\prime }\sigma _{1}=\bigtriangleup \sigma _{1}-\sigma
_{0}\otimes \sigma _{1}-\sigma _{1}\otimes \sigma _{0}$ of $\sigma _{1}$
(with $\sigma _{0}$ denoting the projection to $%
\mathbb{R}
$).

We can redefine controlled paths by using time-varying cocyclic one-forms.

\begin{definition}[Weakly Controlled Paths]
\label{Definition weakly controlled path}Let $g\in C^{p-var}\left( \left[ 0,T%
\right] ,\mathcal{G}_{\left[ p\right] }\right) $ and let $\mathcal{U}$ be a
Banach space. We say that $\gamma :\left[ 0,T\right] \rightarrow \mathcal{U}$
is a path weakly controlled by $g$, if there exist control $\omega $ and $%
\beta :\left[ 0,T\right] \rightarrow B\left( \mathcal{G}_{\left[ p\right]
-1},\mathcal{U}\right) $ satisfying%
\begin{gather}
\left\Vert \gamma _{t}-\gamma _{s}-\beta _{s}\left( g_{s},g_{s,t}\right)
\right\Vert \leq \omega \left( s,t\right) ^{\theta -\frac{1}{p}}\text{, }%
\forall 0\leq s<t\leq T\text{,}  \label{condition weakly controlled 1} \\
\left\Vert \left( \beta _{t}-\beta _{s}\right) \left( g_{t},\cdot \right)
\right\Vert _{k}\leq \omega \left( s,t\right) ^{\theta -\frac{1+k}{p}}\text{%
, }\forall 0\leq s<t\leq T\text{, }k=1,\dots ,\left[ p\right] -1\text{.}
\label{condition weakly controlled 2}
\end{gather}
\end{definition}

If $\gamma $ is a controlled path in the sense of Definition \ref{Definition
weakly controlled path Gubinelli} then $\gamma $ is a controlled path in the
sense of Definition \ref{Definition weakly controlled path}. Indeed, we can
rewrite $\left( \ref{condition increment of weakly controlled path}\right) $
and $\left( \ref{condition time-varying of weakly controlled path}\right) $
in term of time-varying cocyclic one-forms. Define $\beta \in C\left( \left[
0,T\right] ,B\left( \mathcal{G}_{\left[ p\right] -1},\mathcal{U}\right)
\right) \,$by 
\begin{equation*}
\beta _{s}\left( a,b\right) :=\tsum\nolimits_{\sigma \in \mathcal{P}_{\left[
p\right] -1},\left\vert \sigma \right\vert \geq 1}\gamma _{s}^{\sigma
}\sigma \left( g_{s}^{-1}a\left( b-\sigma _{0}\left( b\right) \right)
\right) \text{, }\forall a,b\in \mathcal{G}_{\left[ p\right] -1}\text{, }%
\forall 0\leq s\leq T\text{.}
\end{equation*}%
Then $\left( \ref{condition increment of weakly controlled path}\right)
\Leftrightarrow \left( \ref{condition weakly controlled 1}\right) $, and $%
\left( \ref{condition time-varying of weakly controlled path}\right)
\Rightarrow \left( \ref{condition weakly controlled 2}\right) $. Indeed, $%
\left( \ref{condition time-varying of weakly controlled path}\right) $
implies $\left( \ref{condition weakly controlled 2}\right) $, because for
any $a\in \mathcal{G}_{\left[ p\right] -1}$ we have 
\begin{align}
\left( \beta _{t}-\beta _{s}\right) \left( g_{t},a\right) &
=\tsum\nolimits_{\sigma \in \mathcal{P}_{\left[ p\right] -1},\left\vert
\sigma \right\vert \geq 1}\gamma _{t}^{\sigma }\sigma \left( a-\sigma
_{0}\left( a\right) \right) -\tsum\nolimits_{\sigma \in \mathcal{P}_{\left[ p%
\right] -1},\left\vert \sigma \right\vert \geq 1}\gamma _{s}^{\sigma }\sigma
\left( g_{s,t}\left( a-\sigma _{0}\left( a\right) \right) \right)  \notag \\
& =\tsum\nolimits_{\sigma \in \mathcal{P}_{\left[ p\right] -1},\left\vert
\sigma \right\vert \geq 1}\left( \gamma _{t}^{\sigma }-\gamma _{s}^{\sigma
}-\tsum\nolimits_{\sigma _{i}\in \mathcal{P}_{\left[ p\right] -1},\left\vert
\sigma _{i}\right\vert \geq 1}c^{\prime }\left( \sigma _{1},\sigma
_{2},\sigma \right) \gamma _{s}^{\sigma _{1}}\sigma _{2}\left(
g_{s,t}\right) \right) \sigma \left( a\right) \text{,}
\label{inner equivalency of condition 2}
\end{align}%
where the constant $c^{\prime }\left( \sigma _{1},\sigma _{2},\sigma \right) 
$ is defined as in $\left( \ref{condition time-varying of weakly controlled
path}\right) $. Since both sides of $\left( \ref{inner equivalency of
condition 2}\right) $ are linear in $a$, based on Condition \ref{Condition
smallest Banach space} (we proved that it holds for Butcher group), $\left( %
\ref{inner equivalency of condition 2}\right) $ holds for any $v\in (%
\mathcal{%
\mathbb{R}
}^{d}\mathcal{)}^{\otimes k}$, $k=1,\dots ,\left[ p\right] -1$. Hence, 
\begin{eqnarray*}
\left\Vert \left( \beta _{t}-\beta _{s}\right) \left( g_{t},\cdot \right)
\right\Vert _{k} &\leq &\tsum\nolimits_{\sigma \in \mathcal{P}_{\left[ p%
\right] -1},\left\vert \sigma \right\vert =k}\left\Vert \left( \gamma
_{t}^{\sigma }-\gamma _{s}^{\sigma }-\tsum\nolimits_{\sigma _{i}\in \mathcal{%
P}_{\left[ p\right] -1},\left\vert \sigma _{i}\right\vert \geq 1}c^{\prime
}\left( \sigma _{1},\sigma _{2},\sigma \right) \gamma _{s}^{\sigma
_{1}}\sigma _{2}\left( g_{s,t}\right) \right) \sigma \left( \cdot \right)
\right\Vert _{k} \\
&\leq &C(\left\Vert g\right\Vert _{p-var,\left[ s,t\right] }^{p})^{\theta -%
\frac{1+k}{p}}\text{, }\forall 0\leq s<t\leq T\text{, }k=1,\dots ,\left[ p%
\right] -1\text{.}
\end{eqnarray*}

The space of controlled paths is a linear space and is preserved under
composition with regular functions. Moreover, when $2\leq p<3$, for paths $%
\gamma ^{1}$ and $\gamma ^{2}$ controlled by $g\in C^{p-var}\left( \left[ 0,T%
\right] ,\mathcal{G}_{\left[ p\right] }\right) $, the integral path $%
\int_{0}^{\cdot }\gamma _{u}^{1}\otimes d\gamma _{u}^{2}$ is canonically
defined and is again a path controlled by $g$ (Theorem $1$ \cite%
{gubinelli2004controlling}). When $p\geq 3$, for path $\gamma $ controlled
by $g\in C^{p-var}\left( \left[ 0,T\right] ,\mathcal{G}_{\left[ p\right]
}\right) $ with $x$ denoting the first level of $g$, the integral path $%
\int_{0}^{\cdot }\gamma _{u}\otimes dx_{u}$ is well defined and is again a
path controlled by $g$ (Theorem 8.5 \cite{gubinelli2010ramification},
Definition 4.17 \cite{friz2014course}). The existence of the canonical
integral of controlled paths when $2\leq p<3$ resp. $p\geq 3$ is closely
related to the mappings $\mathcal{I}$ in $\left( \ref{definition of the
mapping I for Butcher group 1}\right) $ resp. $\mathcal{I}^{\prime }$ in $%
\left( \ref{definition of the mapping I' for Butcher group 2}\right) $ (see
also Remark \ref{Remark integrating a weakly controlled path get a dominated
path} and Corollary \ref{Example weakly controlled path} below).

Comparing with the definition of dominated paths, we have

\begin{proposition}
If $\gamma $ is a path dominated by $g$, then $\gamma $ is a path weakly
controlled by $g$ as in Definition \ref{Definition weakly controlled path}.
\end{proposition}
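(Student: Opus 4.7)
The plan is to construct a time-varying cocyclic one-form $\tilde\beta \in C([0,T], B(\mathcal{G}_{[p]-1}, \mathcal{U}))$ witnessing that $\gamma$ is weakly controlled. For each $s \in [0,T]$ and each $\sigma \in \mathcal{P}_{[p]-1}$ with $|\sigma| \geq 1$, I first extract coefficients $\gamma_s^\sigma := \beta_s(g_s, \cdot)|_{\mathcal{V}^{\otimes |\sigma|}}$, and then assemble $\tilde\beta_s$ via the very formula linking Definitions \ref{Definition weakly controlled path Gubinelli} and \ref{Definition weakly controlled path}, namely
\[
\tilde\beta_s(a, b) := \sum_{\sigma \in \mathcal{P}_{[p]-1},\, |\sigma| \geq 1} \gamma_s^\sigma\, \sigma\bigl(g_s^{-1} a (b - \sigma_0(b))\bigr), \quad a, b \in \mathcal{G}_{[p]-1}.
\]
Cocyclicity on $\mathcal{G}_{[p]-1}$ follows from the algebraic identity $a(b-1) + ab(c-1) = a(bc-1)$ together with the linearity of each $\sigma$; continuity of $s \mapsto \tilde\beta_s$ is inherited from $\beta$.

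Next I verify the two conditions of Definition \ref{Definition weakly controlled path}. Using linearity of $\beta_s(g_s, \cdot)$ and decomposing $g_{s,t}$ by degree, the construction gives $\tilde\beta_s(g_s, g_{s,t}) = \beta_s(g_s, g_{s,t}) - \beta_s(g_s, \pi_{[p]}(g_{s,t}))$, where $\pi_{[p]}(g_{s,t}) := \sum_{|\sigma|=[p]} \sigma(g_{s,t})$ denotes the degree-$[p]$ part. Combining the dominated estimate \eqref{estimate of integral in terms of one-forms}, the slowly-varying bound $\|\beta_s(g_s, \cdot)\|_{[p]} \leq M$, and $\|\pi_{[p]}(g_{s,t})\| \leq \|g\|_{p-var,[s,t]}^{[p]}$ (from Notation \ref{Definition p-var of group-valued path}) yields, with $\hat\omega := \omega + \|g\|_{p-var}^p$,
\[
\|\gamma_t - \gamma_s - \tilde\beta_s(g_s, g_{s,t})\| \leq C\hat\omega(s,t)^\theta + M\|g\|_{p-var,[s,t]}^{[p]} \leq C'\hat\omega(s,t)^{\min(\theta,\, [p]/p)}.
\]
For the time-regularity bound, fix $v \in \mathcal{V}^{\otimes k}$ with $1 \leq k \leq [p]-1$, and use the $\beta_s$-cocycle identity $\beta_s(g_t, v) = \beta_s(g_s, g_{s,t} v)$ (valid since $\sigma_0(v) = 0$) together with the explicit form of $\tilde\beta_s$ to derive
\[
(\tilde\beta_t - \tilde\beta_s)(g_t, v) = (\beta_t - \beta_s)(g_t, v) + \beta_s(g_s, \pi_{[p]}(g_{s,t} v)).
\]
The first summand is bounded by $\omega(s,t)^{\theta - k/p}\|v\|$ via the slowly-varying condition. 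For the second, the comultiplication in Notation \ref{Notation triple} represents $\pi_{[p]}(g_{s,t} v)$ as a finite sum of terms $\sigma^{1,i}(g_{s,t}) \otimes \sigma^{2,i}(v)$ with $|\sigma^{1,i}| = [p]-k$; each is bounded in norm by $\|g\|_{p-var,[s,t]}^{[p]-k}\|v\|$, giving $\|\beta_s(g_s, \pi_{[p]}(g_{s,t} v))\| \leq C M \|g\|_{p-var,[s,t]}^{[p]-k}\|v\|$. Together, $\|(\tilde\beta_t - \tilde\beta_s)(g_t, \cdot)\|_k \leq C'\hat\omega(s,t)^{\min(\theta - k/p,\, ([p]-k)/p)}$.

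Setting $\tilde\omega := \hat\omega$ and $\tilde\theta := \min(\theta + 1/p,\, ([p]+1)/p)$, both conditions \eqref{condition weakly controlled 1} and \eqref{condition weakly controlled 2} of Definition \ref{Definition weakly controlled path} hold with $\tilde\theta > 1$ (since $\theta > 1$ and $[p]+1 > p$). The main obstacle lies in the construction step: the naive candidate obtained by restricting $\alpha_s := \beta_s(1, \cdot)$ from Proposition \ref{Proposition equivalent definition of cocyclic one-form} to $T^{([p]-1)}(\mathcal{V})$ and setting $\tilde\beta_s(a,b) := -\tilde\alpha_s(a) + \tilde\alpha_s(ab)$ does \emph{not} yield an adequate approximation in \eqref{condition weakly controlled 1}, because then $\beta_s(g_s, g_{s,t}) - \tilde\beta_s$ picks up cross terms of order $\|g\|_{p-var,[s,t]}$ from the expansion of $\pi_{[p]}(g_t) - \pi_{[p]}(g_s)$. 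The Gubinelli-style coefficient construction above absorbs those cross terms into the $s$-dependence of $\tilde\beta_s$, which is what makes the estimates work.
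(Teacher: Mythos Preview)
The paper states this proposition without proof, leaving it as an immediate comparison of the slowly-varying condition (Condition~\ref{Condition of integrable beta}) and the integral estimate~\eqref{estimate of integral in terms of one-forms} against the two requirements~\eqref{condition weakly controlled 1}--\eqref{condition weakly controlled 2} of Definition~\ref{Definition weakly controlled path}. Your argument supplies exactly the details one would expect: truncate the one-form $\beta$ on $\mathcal{G}_{[p]}$ to a one-form $\tilde\beta$ on $\mathcal{G}_{[p]-1}$ by discarding the degree-$[p]$ contribution, then control the resulting error terms. The two key identities
\[
\tilde\beta_s(g_s,g_{s,t})=\beta_s(g_s,g_{s,t})-\beta_s\bigl(g_s,\pi_{[p]}(g_{s,t})\bigr),\qquad
(\tilde\beta_t-\tilde\beta_s)(g_t,v)=(\beta_t-\beta_s)(g_t,v)+\beta_s\bigl(g_s,\pi_{[p]}(g_{s,t}v)\bigr)
\]
are correct (the second using~\eqref{equality between two one-forms}), and the choice $\tilde\theta=\min(\theta+1/p,\,([p]+1)/p)>1$ makes the exponents line up precisely: $\tilde\theta-1/p=\min(\theta,\,[p]/p)$ and $\tilde\theta-(1+k)/p=\min(\theta-k/p,\,([p]-k)/p)$. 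The proof is correct. The closing paragraph on the ``naive'' construction via $\alpha_s=\beta_s(1,\cdot)$ is not needed for the argument itself, but it is a reasonable aside.
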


The other direction is not necessarily true, and the one-form associated
with a controlled path can vary a little quicker than the one-form
associated with a dominated path. Roughly speaking, the relationship between
the controlled path and the dominated path is comparable to the relationship
between the integrand and the indefinite integral got after integration or
to the difference between a weak and a strong solution to a stochastic 
differential equation. The slowly varying 
one form in the coupling realises $\gamma$ as a function of $X$. %

In \cite{lyons2015theory} we proved that, when the mapping $\mathcal{I}^{\prime%
}$ exists (e.g. step-$n$ nilpotent Lie group or step-$n$ Butcher group $%
n\geq 1$), for controlled path $\gamma $ and $x:=\pi _{1}\left( g\right) $,
the indefinite integral $\int_{0}^{\cdot }\gamma _{u}\otimes dx_{u}$ is
well-defined as a dominated path. More generally, when the mapping $\mathcal{%
I}$ exists (e.g. step-$n$ nilpotent Lie group $n\geq 1$ or step-$2$ Butcher
group), for controlled path $\gamma ^{1}$ and dominated path $\gamma ^{2}$,
the integral path $\int_{0}^{\cdot }\gamma _{u}^{1}\otimes d\gamma _{u}^{2}$
is well defined and is another dominated path (Remark \ref{Remark
integrating a weakly controlled path get a dominated path}).

In the definition of dominated paths e.g. $h_{\cdot }=h_{0}+\int_{0}^{\cdot
}\beta _{u}\left( g_{u}\right) dg_{u}$ in $\left( \ref{definition of
dominated path}\right) $, $\beta $ is integrable against $g$ and $h$ is
determined by $\beta $. Indeed, dominated paths are all about integrable
one-forms, and the path is determined by the one-form. On the other hand,
based on $\left( \ref{condition weakly controlled 1}\right) $ and $\left( %
\ref{condition weakly controlled 2}\right) $, for a controlled path $\gamma $%
, $\beta $ does not necessarily satisfy the integrable condition, and $%
\gamma $ is not uniquely determined by $\beta $. Indeed, for $\beta $
satisfying $\left( \ref{condition weakly controlled 2}\right) $, there does
not necessarily exist a path $\gamma $ satisfies $\left( \ref{condition
weakly controlled 1}\right) $; if there exists a $\gamma $ satisfies $\left( %
\ref{condition weakly controlled 1}\right) $, then $\gamma +\eta $ also
satisfies $\left( \ref{condition weakly controlled 1}\right) $ for any $\eta
:\left[ 0,T\right] \rightarrow \mathcal{U}$ satisfying $\left\Vert \eta
_{t}-\eta _{s}\right\Vert \leq C|\hspace{-0.01in}|g|\hspace{-0.01in}|_{p-var,%
\left[ s,t\right] }^{\theta p-1}$, $\forall 0\leq s<t\leq T$. That the
time-varying one-form is not sufficiently integrable and that the path is
not uniquely determined by the one-form will always be there for a
controlled path, which make the existence of the canonical iterated integral
(when $2\leq p<3$ \cite{gubinelli2004controlling}) of two controlled paths a
very interesting result. In fact the existence of the iterated integral is
not solely about the one-form; it is the result of the interplay between the
one-form and the path via the intermediary of integration (Section \ref%
{Subsection existence of integral}). Indeed, based on Corollary \ref{Example
weakly controlled path}, for paths $\gamma ^{i}$, $i=1,2$, controlled by $%
g\in C^{p-var}\left( \left[ 0,T\right] ,\mathcal{G}_{\left[ p\right]
}\right) $, $2\leq p<3$, the path $\int_{0}^{\cdot }\gamma _{u}^{1}\otimes
d\gamma _{u}^{2}$ can be represented as the integral of a time-varying
cocyclic one-form against the group-valued path $\gamma ^{2}\oplus g$. As a
result, for a path $\gamma :\left[ 0,T\right] \rightarrow 
\mathbb{R}
^{e}$ controlled by $g\in C^{p-var}\left( \left[ 0,T\right] ,\mathcal{G}%
_{[p]}\right) $, $2\leq p<3$, there exists a canonical enhancement of $%
\gamma $ to a path taking values in the step-$2$ Butcher group over $%
\mathbb{R}
^{e}$:%
\begin{gather*}
\Gamma _{t}:=1+\tsum\nolimits_{\sigma \in \mathcal{P}_{2}}x\left( \sigma
\right) _{t}e_{\sigma }\text{ ,} \\
\text{with }x\left( \Lcdot_{i}\right) _{t}:=\gamma _{t}^{i}-\gamma _{0}^{i}%
\text{, \ }x\left( \Lcdot_{i}\Lcdot_{j}\right) _{t}:=x\left( \Lcdot%
_{i}\right) _{t}x\left( \Lcdot_{j}\right) _{t}\text{, }x\left( %
\raisebox{0.0012in}{$|$}\hspace{-0.0572in}_{\Lcdot i}^{\Lcdot j}\right)
_{t}:=\tint\nolimits_{0}^{t}x\left( \Lcdot_{j}\right) _{u}d\gamma _{u}^{i}%
\text{, }\forall \left( i,j\right) \in \left\{ 1,\dots ,e\right\}
^{2},\forall t\text{,}
\end{gather*}%
where $e_{\sigma }\in (%
\mathbb{R}
^{e})^{\otimes \left\vert \sigma \right\vert }$ is the basis coordinate
corresponding to the labelled forest $\sigma $. The set of paths dominated
by $\Gamma $ clearly includes $\gamma $. When $\gamma $ is dominated by $g$,
the set of paths dominated by $\Gamma $ is a subset of the paths dominated
by $g$ (Proposition \ref{Proposition Transitivity}). Intuitively, one could
split the space of controlled paths to subspaces of dominated paths
(dominated by a slightly perturbed group-valued path). Each subspace is a
linear space and an algebra, stable under iterated integration and
composition with regular functions. It is also possible to union finitely
many of these subspaces, which will be dominated by the joint signature of
these controlled paths. The canonical enhancement of a controlled path is
well-defined when the group is the step-$n$ nilpotent Lie group for $n\geq 1$
or the step-$2$ Butcher group. While it should be noted that for step-$n$
Butcher group, $n\geq 3$, it is difficult to construct the mapping $\mathcal{%
I}$ in Condition \ref{Condition g satisfies differential equation}, so it is
difficult to define the enhancement of a controlled path in that case.

Working with dominated paths has the benefit that basic operations are 
\textit{continuous} in operator norm in the space of one-forms (Section \ref%
{Section stableness of dominated paths}). For example, the one-form
associated with the group-valued enhancement of a dominated path (built by
using operations in Section \ref{Section stableness of dominated paths}, can
take values in e.g. nilpotent Lie group or Butcher group) is continuous in
operator norm w.r.t. the one-form associated with the base dominated path.

\section{Stability of dominated paths}

\label{Section stableness of dominated paths}

The set of dominated paths is a linear space, stable under iterated
integration, is an algebra and is stable under composition with regular
functions. Moreover, being a dominated path is a transitive property: if $%
\gamma $ is dominated by $g$ and we enhance $\gamma $ via iterated
integration to a group-valued path $\Gamma $, then those paths dominated by $%
\Gamma $ form a subset of the paths dominated by $g$. All of these
operations are continuous in associated one-forms, and explicit dependence
in operator norm is given.

Coefficients in this section may depend on the norm of the mappings $\sigma
_{1}\ast \cdots \ast \sigma _{k}$ (as in Condition \ref{Condition P is an
algebra}) and the norm of the mapping $\mathcal{I}$ (as in Condition \ref%
{Condition g satisfies differential equation}).

Let $\mathcal{U}$ be a Banach space and $\beta \in B\left( \mathcal{G}_{%
\left[ p\right] },\mathcal{U}\right) $. Using that $\beta \left( a,b\left(
c-1\right) \right) =\beta \left( ab,c\right) =\beta \left( ab,c-1\right) $, $%
\forall a,b,c\in \mathcal{G}_{\left[ p\right] }$, the linearity in $%
c-1=c-\sigma _{0}\left( c\right) $ and that $%
\mathbb{R}
\oplus \mathcal{V}\oplus \cdots \oplus \mathcal{V}^{\otimes \left[ p\right]
} $ is the linear span of $\mathcal{G}_{\left[ p\right] }$ (Condition \ref%
{Condition smallest Banach space}), we get 
\begin{equation}
\beta \left( a,bv\right) =\beta \left( ab,v\right) \text{, \ }\forall a,b\in 
\mathcal{G}_{\left[ p\right] },\forall v\in \mathcal{V}^{\otimes k},\text{ }%
k=1,\dots ,\left[ p\right] \text{.}  \label{equality between two one-forms}
\end{equation}

\subsection{Iterated integration}

Recall the mapping $\mathcal{I}\in L(T^{\left( \left[ p\right] \right)
}\left( \mathcal{V}\right) ,$ $T^{\left( \left[ p\right] \right) }\left( 
\mathcal{V}\right) ^{\otimes 2})$ in Condition \ref{Condition g satisfies
differential equation}.

\begin{proposition}[Iterated Integration]
\label{Proposition enhancement}\textit{Let }$g\in C^{p-var}\left( \left[ 0,T%
\right] ,\mathcal{G}_{\left[ p\right] }\right) $ and $\mathcal{U}^{i}$%
\textit{, }$i=1,2$,\textit{\ be Banach spaces. Suppose }$\int_{0}^{\cdot
}\beta _{u}^{i}\left( g_{u}\right) dg_{u}:\left[ 0,T\right] \rightarrow 
\mathcal{U}^{i}$, $i=1,2$, \textit{are two dominated paths satisfying }$%
\left\Vert \beta ^{i}\right\Vert _{\theta _{i}}^{\omega _{i}}<\infty $ for
control $\omega _{i}$ and $\theta _{i}>1$, $i=1,2$ (as defined in $\left( %
\ref{Definition of operator norm}\right) $). Then \textit{there exists a
dominated path }$\int_{0}^{\cdot }\beta _{u}\left( g_{u}\right) dg_{u}:\left[
0,T\right] \rightarrow \mathcal{U}^{1}\otimes \mathcal{U}^{2}$ such that
with control $\omega :=\omega _{1}+\omega _{2}+\left\Vert g\right\Vert
_{p-var}^{p}$ and $\theta :=\min \left( \theta _{1},\theta _{2}\right) $,%
\begin{equation}
\left\Vert \beta \right\Vert _{\theta }^{\omega }\leq C_{p,\omega \left(
0,T\right) }\left\Vert \beta ^{1}\right\Vert _{\theta _{1}}^{\omega
_{1}}\left\Vert \beta ^{2}\right\Vert _{\theta _{2}}^{\omega _{2}}\text{,}
\label{continuity of one-forms iterated integral}
\end{equation}%
and for $0\leq s\leq t\leq T$, 
\begin{equation}
\left\Vert \int_{s}^{t}\beta _{u}\left( g_{u}\right)
dg_{u}-\int_{0}^{s}\beta _{u}^{1}\left( g_{u}\right) dg_{u}\otimes \beta
_{s}^{2}\left( g_{s},g_{s,t}\right) -\beta _{s}^{1}\left( g_{s},\cdot
\right) \otimes \beta _{s}^{2}\left( g_{s},\cdot \right) \mathcal{I}\left(
g_{s,t}\right) \right\Vert \leq \omega \left( s,t\right) ^{\theta }\text{,}
\label{estimate of iterated integral}
\end{equation}%
\textit{where }$\beta _{s}^{1}\left( g_{s},\cdot \right) \otimes \beta
_{s}^{2}\left( g_{s},\cdot \right) $\textit{\ denotes the unique continuous
linear operator from }$T^{\left( \left[ p\right] \right) }\left( \mathcal{V}%
\right) ^{\otimes 2}$ to $\mathcal{U}^{1}\otimes \mathcal{U}^{2}$ satisfying 
$\left( \beta _{s}^{1}\left( g_{s},\cdot \right) \otimes \beta
_{s}^{2}\left( g_{s},\cdot \right) \right) \left( v_{1}\otimes v_{2}\right)
=\beta _{s}^{1}\left( g_{s},v_{1}\right) \otimes \beta _{s}^{2}\left(
g_{s},v_{2}\right) $\textit{, }$\forall v_{1},v_{2}\in T^{\left( \left[ p%
\right] \right) }\left( \mathcal{V}\right) $.
\end{proposition}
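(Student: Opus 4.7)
The plan is to construct $\beta$ by lifting $\beta^{1},\beta^{2}$ to a linear primitive using the mapping $\mathcal{I}$ from Condition \ref{Condition g satisfies differential equation}, and then to extract the cocyclic one-form from that primitive via Proposition \ref{Proposition equivalent definition of cocyclic one-form}. Explicitly, writing $h^{1}_{s}:=\int_{0}^{s}\beta^{1}_{u}(g_{u})dg_{u}$, I would set
\[
\alpha_{s}(v)\;:=\;h^{1}_{s}\otimes\beta^{2}_{s}(g_{s},g_{s}^{-1}v)\;+\;\bigl(\beta^{1}_{s}(g_{s},\cdot)\otimes\beta^{2}_{s}(g_{s},\cdot)\bigr)\bigl(\mathcal{I}(g_{s}^{-1}v)\bigr),\qquad v\in T^{([p])}(\mathcal{V}),
\]
which is linear in $v$, and define $\beta_{s}(a,b):=-\alpha_{s}(a)+\alpha_{s}(ab)$ with $ab$ taken in the algebra $T^{([p])}(\mathcal{V})$. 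Since $\mathcal{U}^{1}\otimes\mathcal{U}^{2}$ is regarded as an additive group, Proposition \ref{Proposition equivalent definition of cocyclic one-form} makes $\beta_{s}$ automatically cocyclic, so the only work is in checking regularity and local behaviour.

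Next I would unpack $\beta_{s}(a,b)$ explicitly. Using \eqref{equality between two one-forms} one rewrites $\beta^{2}_{s}(g_{s},g_{s}^{-1}ab)-\beta^{2}_{s}(g_{s},g_{s}^{-1}a)=\beta^{2}_{s}(a,b)$, and applying the multiplicativity identity \eqref{property of I} for $\mathcal{I}$ to $g_{s}^{-1}a\cdot b$ yields
\[
\beta_{s}(a,b)=h^{1}_{s}\otimes\beta^{2}_{s}(a,b)+\bigl(\beta^{1}_{s}(g_{s},\cdot)\otimes\beta^{2}_{s}(g_{s},\cdot)\bigr)\bigl(\mathcal{I}(g_{s}^{-1}ab)-\mathcal{I}(g_{s}^{-1}a)\bigr).
\]
Specialising to $a=g_{s}$, $b=g_{s,t}$ and using $\mathcal{I}(1)=0$ gives the first-order approximation appearing in \eqref{estimate of iterated integral}; the full estimate \eqref{estimate of iterated integral} then follows at once from Theorem \ref{Theorem integrating slow varying cocyclic one forms} once the integrability/slow-variation of $(\beta,g)$ is verified.

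The bulk of the argument is the verification of Condition \ref{Condition of integrable beta} for $(\beta,g)$, with the product bound \eqref{continuity of one-forms iterated integral}. Boundedness of $\beta_{t}(g_{t},\cdot)$ on $\mathcal{V}^{\otimes k}$ is immediate from the boundedness of each $\beta^{i}_{t}(g_{t},\cdot)$, from \eqref{Lipschitz continuity of the integral in terms of one-forms} (which controls $\|h^{1}_{s}\|$ by $\|\beta^{1}\|_{\theta_{1}}^{\omega_{1}}$), from Condition \ref{Condition P is an algebra} (which makes $\beta^{1}_{s}(g_{s},\cdot)\otimes\beta^{2}_{s}(g_{s},\cdot)$ a bona fide linear operator on $T^{([p])}(\mathcal{V})^{\otimes 2}$), and from the gradation \eqref{property of I   II} of $\mathcal{I}$. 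For the time-increment $(\beta_{t}-\beta_{s})(g_{t},v)$, $v\in\mathcal{V}^{\otimes k}$, I would use the explicit formula to split the difference into (i) $(h^{1}_{t}-h^{1}_{s})\otimes\beta^{2}_{t}(g_{t},v)+h^{1}_{s}\otimes(\beta^{2}_{t}-\beta^{2}_{s})(g_{t},v)$, (ii) the variation of the prefactor $\beta^{1}_{s}(g_{s},\cdot)\otimes\beta^{2}_{s}(g_{s},\cdot)$ between times $s$ and $t$, and (iii) the variation of the $\mathcal{I}$-term from changing the base point $g_{s}$ to $g_{t}$, the latter being handled by applying identity \eqref{property of I} once more to compute $\mathcal{I}(g_{s}^{-1}g_{t}\cdot v)-\mathcal{I}(g_{s,t})$ and using that $\|g_{s,t}\|\lesssim \|g\|_{p\text{-}var,[s,t]}$. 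Each piece has the correct exponent $\omega(s,t)^{\theta-k/p}$ with $\omega:=\omega_{1}+\omega_{2}+\|g\|_{p\text{-}var}^{p}$ and $\theta:=\min(\theta_{1},\theta_{2})$, provided one also uses the remainder estimate \eqref{estimate of integral in terms of one-forms} to handle $h^{1}_{t}-h^{1}_{s}-\beta^{1}_{s}(g_{s},g_{s,t})$.

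The hard part is precisely this last regularity step: the difference $(\beta_{t}-\beta_{s})(g_{t},\cdot)$ mixes a genuine time-variation of $\beta^{1},\beta^{2}$ with an algebraic propagation of the $\mathcal{I}$-correction under a shift of base point, and the exponent bookkeeping only closes because the gradation $\mathcal{I}(\mathcal{V}^{\otimes k})\subseteq\sum_{j_{1}+j_{2}=k,\,j_{i}\ge 1}\mathcal{V}^{\otimes j_{1}}\otimes\mathcal{V}^{\otimes j_{2}}$ forces the cross-terms to sit at a strictly lower level than $k$, so that a loss of $k/p$ can be apportioned across the two factors without exceeding $\theta-k/p$. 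Once this is established the product bound \eqref{continuity of one-forms iterated integral} drops out by tracking constants, since every contribution carries exactly one factor from $\beta^{1}$ and one from $\beta^{2}$ in operator norm.
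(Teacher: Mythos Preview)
Your construction of $\beta$ via the primitive $\alpha_s$ and Proposition~\ref{Proposition equivalent definition of cocyclic one-form} is a clean repackaging, but it produces exactly the same one-form as the paper: expanding $-\alpha_s(a)+\alpha_s(ab)$ using linearity of $\mathcal{I}$ and $\beta^2_s(g_s,\cdot)$ gives the paper's formula $h^1_s\otimes\beta^2_s(g_s,g_s^{-1}a(b-1))+\beta^{1,2}_s(g_s,g_s^{-1}a(b-1))$ verbatim. So the approaches coincide, and your outline of the regularity check is in the right spirit.

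There is, however, a genuine issue with the decomposition you propose. You claim ``each piece has the correct exponent $\omega(s,t)^{\theta-k/p}$'', but piece (i) as written does \emph{not}: the term $(h^1_t-h^1_s)\otimes\beta^2_t(g_t,v)$ is only of order $\hat\omega(s,t)^{1/p}$, which for small $k$ is strictly worse than $\omega(s,t)^{\theta-k/p}$. The resolution---which you allude to at the end but do not integrate into the decomposition---is that applying \eqref{property of I} in piece (iii) to $\mathcal{I}(g_{s,t}v)$ produces, among other things, a cross-term $\beta^1_s(g_s,g_{s,t})\otimes\beta^2_s(g_t,v)$ (from the summand $1_{[p],2}((g_{s,t}-1)\otimes(g_{s,t}v))$). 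This must be \emph{added back} to piece (i), re-telescoped as $(h^1_t-h^1_s)\otimes\beta^2_s(g_t,v)+h^1_t\otimes(\beta^2_t-\beta^2_s)(g_t,v)$, so that the combination becomes $(h^1_t-h^1_s-\beta^1_s(g_s,g_{s,t}))\otimes\beta^2_s(g_t,v)$, which \eqref{estimate of integral in terms of one-forms} then controls by $\hat\omega(s,t)^{\theta}$. The paper makes this cancellation explicit in its displayed formula for $(\beta_t-\beta_s)(g_t,a)$; your three pieces are not separately estimable and must be regrouped before bounding. You should also not forget that the truncation $1_{[p],2}$ in \eqref{property of I} leaves residual high-degree terms (the last two sums in the paper's expansion) that need their own estimate via $\|\sigma(g_{s,t})\|\le\|g\|_{p\text{-var},[s,t]}^{|\sigma|}$.

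A minor point: Condition~\ref{Condition P is an algebra} is not needed here. The operator $\beta^1_s(g_s,\cdot)\otimes\beta^2_s(g_s,\cdot)$ on $T^{([p])}(\mathcal{V})^{\otimes 2}$ exists purely from the tensor-product structure; the $\ast$-product only enters in Propositions~\ref{Proposition Algebra} and~\ref{Proposition Composition}.
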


\begin{remark}
Let $\gamma _{\cdot }^{i}:=\int_{0}^{\cdot }\beta _{u}^{i}\left(
g_{u}\right) dg_{u}$, $i=1,2$. Then for $s<t$, 
\begin{eqnarray*}
&&\tiint\nolimits_{0<u_{1}<u_{2}<t}d\gamma _{u_{1}}^{1}\otimes d\gamma
_{u_{2}}^{2}-\tiint\nolimits_{0<u_{1}<u_{2}<s}d\gamma _{u_{1}}^{1}\otimes
d\gamma _{u_{2}}^{2} \\
&=&\left( \gamma _{s}^{1}-\gamma _{0}^{1}\right) \otimes \left( \gamma
_{t}^{2}-\gamma _{s}^{2}\right) +\tiint\nolimits_{s<u_{1}<u_{2}<t}d\gamma
_{u_{1}}^{1}\otimes d\gamma _{u_{2}}^{2}\text{,}
\end{eqnarray*}%
with $\left( \gamma _{s}^{1}-\gamma _{0}^{1}\right) \otimes \left( \gamma
_{t}^{2}-\gamma _{s}^{2}\right) $ and $\tiint\nolimits_{s<u_{1}<u_{2}<t}d%
\gamma _{u_{1}}^{1}\otimes d\gamma _{u_{2}}^{2}$ correspond to the two parts
in $\left( \ref{estimate of iterated integral}\right) $.
\end{remark}

\begin{remark}
\label{Remark integrating a weakly controlled path get a dominated path}When 
$\gamma ^{1}$ is a controlled path (Definition \ref{Definition weakly
controlled path}) and $\gamma ^{2}$ is a dominated path, same proof applies
and obtains that $\int_{0}^{\cdot }\gamma ^{1}\otimes d\gamma ^{2}$ is a
dominated path. In particular, when $\gamma ^{1}$ is a controlled path and $%
\gamma ^{2}=\pi _{1}\left( g\right) :=x$ (dominated by $g$ with $\beta
_{s}\left( a,b\right) =\pi _{1}\left( a\left( b-1\right) \right) $ for all $%
s $), the indefinite integral $\int_{0}^{\cdot }\gamma ^{1}\otimes dx$ is a
dominated path. This integral $\int_{0}^{\cdot }\gamma ^{1}\otimes dx$ is
well-defined when Condition 25' (instead of Condition 25) is satisfied (see
Lemma 11 \cite{lyons2015theory}).
\end{remark}

\begin{proof}
We check that $\beta $ satisfies the slowly-varying condition (Condition \ref%
{Condition of integrable beta}). Then $\left( \ref{estimate of iterated
integral}\right) $ follows from Theorem \ref{Theorem integrating slow
varying cocyclic one forms}. We define\textit{\ }$\beta ^{1,2}\in C(\left[
0,T\right] ,C(\mathcal{G}_{\left[ p\right] },L(T^{\left( \left[ p\right]
\right) }\left( \mathcal{V}\right) ,\mathcal{U}^{1}\otimes \mathcal{U}%
^{2}))) $\ by, for $a\in \mathcal{G}_{\left[ p\right] }$ and $v\in T^{\left( %
\left[ p\right] \right) }\left( \mathcal{V}\right) $,\textit{\ } 
\begin{equation}
\beta _{s}^{1,2}\left( a,v\right) :=\left( \beta _{s}^{1}\left( a,\cdot
\right) \otimes \beta _{s}^{2}\left( a,\cdot \right) \right) \mathcal{I}%
\left( v\right) \text{,}  \label{definition of beta1,2}
\end{equation}%
and define $\beta \in C\left( \left[ 0,T\right] ,B\left( \mathcal{G}_{\left[
p\right] },\mathcal{U}^{1}\otimes \mathcal{U}^{2}\right) \right) $ by, for $%
a,b\in \mathcal{G}_{\left[ p\right] }$ and $s\in \left[ 0,T\right] $,%
\begin{equation*}
\beta _{s}\left( a,b\right) :=\int_{0}^{s}\beta _{u}^{1}\left( g_{u}\right)
dg_{u}\otimes \beta _{s}^{2}\left( g_{s},g_{s}^{-1}a\left( b-1\right)
\right) +\beta _{s}^{1,2}\left( g_{s},g_{s}^{-1}a\left( b-1\right) \right) 
\text{.}
\end{equation*}

Recall the mapping $\mathcal{I}$ in Condition \ref{Condition g satisfies
differential equation} satisfy%
\begin{gather}
\mathcal{I}\left( 1\right) =\mathcal{I}\left( \mathcal{V}\right) =0\text{ \
and \ }\mathcal{I}\left( \mathcal{V}^{\otimes k}\right) \subseteq
\tsum\nolimits_{j_{1}+j_{2}=k,j_{i}\geq 1}\mathcal{V}^{\otimes j_{1}}\otimes 
\mathcal{V}^{\otimes j_{2}}\text{, }\ k=2,\dots ,\left[ p\right] \text{,}
\label{inner property of I} \\
\mathcal{I}\left( ab\right) =\mathcal{I}\left( a\right) +1_{\left[ p\right]
,2}\left( \left( a\otimes a\right) \mathcal{I}\left( b\right) \right) +1_{%
\left[ p\right] ,2}\left( \left( a-1\right) \otimes \left( a\left(
b-1\right) \right) \right) \text{, }\forall a,b\in \mathcal{G}_{\left[ p%
\right] }\text{,}  \label{inner property of I   II}
\end{gather}%
where $1_{\left[ p\right] ,2}$ denotes the projection of $T^{\left( \left[ p%
\right] \right) }\left( \mathcal{V}\right) ^{\otimes 2}$ to $%
\tsum\nolimits_{j_{1}+j_{2}\leq \left[ p\right] ,j_{i}\geq 1}\mathcal{V}%
^{\otimes j_{1}}\otimes \mathcal{V}^{\otimes j_{2}}$.

Fix $s<t$. Using $\left( \ref{inner property of I}\right) $, $||(\beta
_{t}^{1,2}-\beta _{s}^{1,2})\left( g_{t},\cdot \right) ||_{1}=0$.\ Using $%
\left( \ref{inner property of I II}\right) $, for $k=2,\dots ,\left[ p\right]
$ and $\theta =\min \left( \theta _{1},\theta _{2}\right) $, 
\begin{eqnarray}
&&\left\Vert \left( \beta _{t}^{1,2}-\beta _{s}^{1,2}\right) \left(
g_{t},\cdot \right) \right\Vert _{k}  \label{inner bound on beta1,2} \\
&\leq &C\tsum\nolimits_{j=1}^{k-1}\left( \left\Vert \left( \beta
_{t}^{1}-\beta _{s}^{1}\right) \left( g_{t},\cdot \right) \right\Vert
_{j}\left\Vert \beta _{t}^{2}\left( g_{t},\cdot \right) \right\Vert
_{k-j}+\left\Vert \beta _{s}^{1}\left( g_{t},\cdot \right) \right\Vert
_{j}\left\Vert \left( \beta _{t}^{2}-\beta _{s}^{2}\right) \left(
g_{t},\cdot \right) \right\Vert _{k-j}\right)  \notag \\
&\leq &C_{p,\omega \left( 0,T\right) }\left\Vert \beta ^{1}\right\Vert
_{\theta _{1}}^{\omega _{1}}\left\Vert \beta ^{2}\right\Vert _{\theta
_{2}}^{\omega _{2}}\omega \left( s,t\right) ^{\theta -\frac{k-1}{p}}\text{.}
\notag
\end{eqnarray}%
Moreover, by $\left( \ref{inner property of I II}\right) $, for $a,b,c\in 
\mathcal{G}_{\left[ p\right] }$ and $s\in \left[ 0,T\right] $, we have 
\begin{eqnarray}
\beta _{s}^{1,2}\left( a,bc\right) &=&\beta _{s}^{1,2}\left( a,b\right)
+\beta _{s}^{1,2}\left( ab,c\right) +\beta _{s}^{1}\left( a,b\right) \otimes
\beta _{s}^{2}\left( ab,c\right)
\label{beta12 group-valued cocyclic one-form} \\
&&-\tsum\nolimits_{\sigma _{i}\in \mathcal{P}_{\left[ p\right] },\left\vert
\sigma _{1}\right\vert +\left\vert \sigma _{2}\right\vert \geq \left[ p%
\right] +1}\beta _{s}^{1}\left( a,\sigma _{1}\left( b\right) \right) \otimes
\beta _{s}^{2}\left( a,\sigma _{2}\left( b\left( c-1\right) \right) \right) 
\notag \\
&&-\tsum\nolimits_{k=2}^{\left[ p\right] }\tsum\nolimits_{\sigma _{i}\in 
\mathcal{P}_{\left[ p\right] },\left\vert \sigma _{1}\right\vert +\left\vert
\sigma _{2}\right\vert \geq \left[ p\right] +1-k}\beta _{s}^{1}\left(
a,\sigma _{1}\left( b\right) \cdot \right) \otimes \beta _{s}^{2}\left(
a,\sigma _{2}\left( b\right) \cdot \right) \mathcal{I}\left( \pi _{k}\left(
c\right) \right) \text{,}  \notag
\end{eqnarray}%
where the extra terms are caused by the truncation in $\left( \ref{inner
property of I II}\right) $. Then it can be computed that, for $a\in \mathcal{%
G}_{\left[ p\right] }$,%
\begin{eqnarray}
&&\left( \beta _{t}-\beta _{s}\right) \left( g_{t},a\right)
\label{inner linear mapping iterated integral} \\
&=&\tint\nolimits_{0}^{t}\beta _{u}^{1}\left( g_{u}\right) dg_{u}\otimes
\left( \beta _{t}^{2}-\beta _{s}^{2}\right) \left( g_{t},a\right)  \notag \\
&&+\left( \tint\nolimits_{s}^{t}\beta _{u}^{1}\left( g_{u}\right)
dg_{u}-\beta _{s}^{1}\left( g_{s},g_{s,t}\right) \right) \otimes \beta
_{s}^{2}\left( g_{t},a\right) +\left( \beta _{t}^{1,2}-\beta
_{s}^{1,2}\right) \left( g_{t},a\right)  \notag \\
&&+\tsum\nolimits_{\sigma _{i}\in \mathcal{P}_{\left[ p\right] },\left\vert
\sigma _{1}\right\vert +\left\vert \sigma _{2}\right\vert \geq \left[ p%
\right] \ +1}\beta _{s}^{1}\left( g_{s},\sigma _{1}\left( g_{s,t}\right)
\right) \otimes \beta _{s}^{2}\left( g_{s},\sigma _{2}\left( g_{s,t}\left(
a-\sigma _{0}\left( a\right) \right) \right) \right)  \notag \\
&&+\tsum\nolimits_{k=2}^{\left[ p\right] }\tsum\nolimits_{\sigma _{i}\in 
\mathcal{P}_{\left[ p\right] },\left\vert \sigma _{1}\right\vert +\left\vert
\sigma _{2}\right\vert \geq \left[ p\right] \ +1-k}\beta _{s}^{1}\left(
g_{s},\sigma _{1}\left( g_{s,t}\right) \cdot \right) \otimes \beta
_{s}^{2}\left( g_{s},\sigma _{2}\left( g_{s,t}\right) \cdot \right) \mathcal{%
I}\left( \pi _{k}\left( a\right) \right) \text{.}  \notag
\end{eqnarray}%
Using Condition \ref{Condition smallest Banach space}, $\left( \ref{inner
linear mapping iterated integral}\right) $ can be extended to $T^{\left( %
\left[ p\right] \right) }\left( \mathcal{V}\right) $. Combining with $\left( %
\ref{estimate of integral in terms of one-forms}\right) $, $\left( \ref%
{Lipschitz continuity of the integral in terms of one-forms}\right) $, $%
\left( \ref{inner bound on beta1,2}\right) $ and $\left\Vert \sigma \left(
g_{s,t}\right) \right\Vert \leq \left\Vert g\right\Vert _{p-var,\left[ s,t%
\right] }^{\left\vert \sigma \right\vert }$, we have, with $\omega =\omega
_{1}+\omega _{2}+\left\Vert g\right\Vert _{p-var}^{p}$ and $\theta =\min
\left( \theta _{1},\theta _{2}\right) $,%
\begin{equation*}
\left\Vert \left( \beta _{t}-\beta _{s}\right) \left( g_{t},\cdot \right)
\right\Vert _{k}\leq C_{p,\omega \left( 0,T\right) }\left\Vert \beta
^{1}\right\Vert _{\theta _{1}}^{\omega _{1}}\left\Vert \beta ^{2}\right\Vert
_{\theta _{2}}^{\omega _{2}}\omega \left( s,t\right) ^{\theta -\frac{k}{p}}%
\text{.}
\end{equation*}%
Similarly we have $\left\Vert \beta _{s}\left( g_{s},\cdot \right)
\right\Vert \leq C_{p,\omega \left( 0,T\right) }\left\Vert \beta
^{1}\right\Vert _{\theta _{1}}^{\omega _{1}}\left\Vert \beta ^{2}\right\Vert
_{\theta _{2}}^{\omega _{2}}$, and the estimate $\left( \ref{continuity of
one-forms iterated integral}\right) $ holds.
\end{proof}

\subsection{Algebra}

Based on Stone-Weierstrass Theorem, being an algebra can be viewed as one of
the most important properties of dominated paths. When working with measures
on paths space, the algebra structure is compatible with the filtration
generated, and the space of previsible integrable one-forms forms an algebra.

\begin{proposition}[Algebra]
\label{Proposition Algebra}\textit{Let }$g\in C^{p-var}\left( \left[ 0,T%
\right] ,\mathcal{G}_{\left[ p\right] }\right) $\textit{. Suppose }$%
\int_{0}^{\cdot }\beta _{u}^{i}\left( g_{u}\right) dg_{u}:\left[ 0,T\right]
\rightarrow \mathcal{U}^{i}$, $i=1,2$, are two dominated paths satisfying $%
\left\Vert \beta ^{i}\right\Vert _{\theta _{i}}^{\omega _{i}}<\infty $ for
control $\omega _{i}$ and $\theta _{i}>1$, $i=1,2$ (as defined in $\left( %
\ref{Definition of operator norm}\right) $). Then \textit{there exists a
dominated path }$\int_{0}^{\cdot }\beta _{u}\left( g_{u}\right) dg_{u}:\left[
0,T\right] \rightarrow \mathcal{U}^{1}\otimes \mathcal{U}^{2}$ such that
with control $\omega :=\omega _{1}+\omega _{2}+\left\Vert g\right\Vert
_{p-var}^{p}$ and $\theta :=\min \left( \theta _{1},\theta _{2}\right) $,%
\begin{equation}
\left\Vert \beta \right\Vert _{\theta }^{\omega }\leq C_{p,\omega \left(
0,T\right) }\left\Vert \beta ^{1}\right\Vert _{\theta _{1}}^{\omega
_{1}}\left\Vert \beta ^{2}\right\Vert _{\theta _{2}}^{\omega _{2}}\text{,}
\label{continuity of one-forms in algebra}
\end{equation}%
\begin{equation*}
\text{and }\int_{0}^{t}\beta _{u}\left( g_{u}\right)
dg_{u}=\int_{0}^{t}\beta _{u}^{1}\left( g_{u}\right) dg_{u}\otimes
\int_{0}^{t}\beta _{u}^{2}\left( g_{u}\right) dg_{u}\text{, }\forall 0\leq
t\leq T\text{.}
\end{equation*}
\end{proposition}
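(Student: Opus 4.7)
The argument parallels that of Proposition \ref{Proposition enhancement}, with Condition \ref{Condition P is an algebra} playing the role that Condition \ref{Condition g satisfies differential equation} played there. Writing $\gamma^i_\cdot := \int_0^\cdot \beta^i_u(g_u)dg_u$, the elementary identity
$$\gamma^1_t \otimes \gamma^2_t - \gamma^1_s \otimes \gamma^2_s = \gamma^1_s \otimes (\gamma^2_t - \gamma^2_s) + (\gamma^1_t - \gamma^1_s) \otimes \gamma^2_s + (\gamma^1_t - \gamma^1_s) \otimes (\gamma^2_t - \gamma^2_s),$$
combined with the local expansion $\gamma^i_t - \gamma^i_s = \beta^i_s(g_s, g_{s,t}) + O(\omega(s,t)^{\theta_i})$ from $\left(\ref{estimate of integral in terms of one-forms}\right)$, indicates the required shape of $\beta$. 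The first two summands on the right-hand side are already linear in $g_{s,t}$ through the given one-forms; the cross term $\beta^1_s(g_s, g_{s,t}) \otimes \beta^2_s(g_s, g_{s,t})$ must be rewritten as a linear function of $g_{s,t}$ alone, which is exactly what the $\ast$-product furnished by Condition \ref{Condition P is an algebra} delivers via $\sigma_1(a) \otimes \sigma_2(a) = (\sigma_1 \ast \sigma_2)(a)$ on $\mathcal{G}_{[p]}$.

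\textbf{Candidate one-form.} Define the quadratic piece
$$\beta^{1,2}_s(a,v) := \sum_{\substack{\sigma_i \in \mathcal{P}_{[p]},\ |\sigma_i| \geq 1 \\ |\sigma_1| + |\sigma_2| \leq [p]}} \bigl(\beta^1_s(a,\cdot) \otimes \beta^2_s(a,\cdot)\bigr)\bigl((\sigma_1 \ast \sigma_2)(v)\bigr),$$
and then set
$$\beta_s(a,b) := \gamma^1_s \otimes \beta^2_s\bigl(g_s, g_s^{-1}a(b-1)\bigr) + \beta^1_s\bigl(g_s, g_s^{-1}a(b-1)\bigr) \otimes \gamma^2_s + \beta^{1,2}_s\bigl(g_s, g_s^{-1}a(b-1)\bigr).$$
All three summands depend on $(a,b)$ only through the additively linear expression $v := g_s^{-1}a(b-1)$; since $g_s^{-1}a(bc-1) = g_s^{-1}a(b-1) + g_s^{-1}(ab)(c-1)$, the cocycle identity (in additive form, since the target Banach space $\mathcal{U}^1 \otimes \mathcal{U}^2$ is the relevant topological group) follows immediately, so $\beta_s \in B(\mathcal{G}_{[p]}, \mathcal{U}^1 \otimes \mathcal{U}^2)$; Proposition \ref{Proposition equivalent definition of cocyclic one-form} gives the same conclusion more abstractly.

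\textbf{Slowly-varying bound, existence, identification.} The estimate $\left(\ref{continuity of one-forms in algebra}\right)$ is proved summand-by-summand. The two mixed linear pieces inherit the $\omega(s,t)^{\theta - k/p}$ decay directly from the slowly-varying hypothesis on $\beta^1, \beta^2$, after using $\left(\ref{Lipschitz continuity of the integral in terms of one-forms}\right)$ to control $\|\gamma^i\|_{p-var}$ by $\|\beta^i\|_{\theta_i}^{\omega_i}$. The $k$-level component of $(\beta^{1,2}_t - \beta^{1,2}_s)(g_t,\cdot)$ splits by the Leibniz rule over pairs $(\sigma_1, \sigma_2)$ with $|\sigma_1| + |\sigma_2| = k$; each summand contributes either $(\beta^1_t - \beta^1_s)$ at level $|\sigma_1|$ times $\beta^2_t$ at level $|\sigma_2|$, or the symmetric variant, yielding the required weight $\omega(s,t)^{\theta - k/p}$. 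Theorem \ref{Theorem integrating slow varying cocyclic one forms} then produces $\int_0^\cdot \beta_u(g_u)dg_u$ with the local error $O(\omega(s,t)^\theta)$. To identify this integral with $\gamma^1 \otimes \gamma^2$, observe that the product path satisfies the \emph{same} one-step estimate $\|\gamma^1_t \otimes \gamma^2_t - \gamma^1_s \otimes \gamma^2_s - \beta_s(g_s, g_{s,t})\| = O(\omega(s,t)^\theta)$ by construction: the terms in the full expansion of $\beta^1_s(g_s,g_{s,t}) \otimes \beta^2_s(g_s,g_{s,t})$ that are dropped in $\beta^{1,2}$ (pairs with $|\sigma_1|+|\sigma_2| > [p]$) contribute at most $O(\omega(s,t)^{([p]+1)/p})$, and $([p]+1)/p > 1$ allows this to be absorbed into $\omega^\theta$ (on reducing $\theta$ to $\min(\theta_1,\theta_2,([p]+1)/p) > 1$ if necessary). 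Both paths are therefore the common Riemann-sum limit of Definition \ref{Definition of integral Definition}, and hence coincide. The main technical obstacle is precisely this truncation bookkeeping for $\beta^{1,2}$, ensuring that the high-order cross-terms discarded in its definition are genuinely negligible on the scale $\omega^\theta$ and that the slowly-varying inequality is verified at every level $k = 1, \dots, [p]$ with the correct exponent.
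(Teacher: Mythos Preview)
Your candidate one-form $\beta$ is exactly the one the paper writes down, and your identification of the integral with $\gamma^1\otimes\gamma^2$ via uniqueness of the multiplicative functional is also how the paper concludes. The gap is in the verification of the slowly-varying condition.

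You claim the estimate $\left(\ref{continuity of one-forms in algebra}\right)$ can be proved ``summand-by-summand''. This is false, and the paper's Remark immediately following its proof flags precisely this point: the three cocyclic one-forms $\eta^1,\eta^2,\eta^3$ summing to $\beta$ are \emph{not} individually slowly-varying when $p\ge2$. Concretely, for the first mixed piece $\gamma^1_s\otimes\beta^2_s(g_s,g_s^{-1}a(b-1))$, the difference $(\eta^1_t-\eta^1_s)(g_t,\cdot)$ contains the term
\[
(\gamma^1_t-\gamma^1_s)\otimes\beta^2_t(g_t,\cdot),
\]
and $\|\gamma^1_t-\gamma^1_s\|$ is only $O(\hat\omega(s,t)^{1/p})$, not $O(\hat\omega(s,t)^{\theta-k/p})$; for $k=1$ and $\theta>1$ one has $\theta-\tfrac1p>1-\tfrac1p\ge\tfrac1p$, so the exponent is strictly too small. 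The same defect affects the symmetric mixed piece and, through the translation $g_{s,t}v$ in the argument of $\sigma_1\ast\sigma_2$, the quadratic piece $\beta^{1,2}$ as well.

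What actually happens is that these bad $O(\omega^{1/p})$ contributions from the three summands \emph{cancel against one another}. The paper makes this explicit by expanding $\gamma^i_t-\gamma^i_s=\beta^i_s(g_s,g_{s,t})+(\int_s^t\beta^i\,dg-\beta^i_s(g_s,g_{s,t}))$ and carrying out the full algebra to arrive at the representation $\left(\ref{inner representation algebra}\right)$ of $(\beta_t-\beta_s)(g_t,a)$: every surviving term there carries either a factor $(\beta^i_t-\beta^i_s)(g_t,\cdot)$ (decay $\omega^{\theta-k/p}$), a factor $\int_s^t\beta^i\,dg-\beta^i_s(g_s,g_{s,t})$ (decay $\omega^\theta$), or products of $\sigma(g_{s,t})$'s of total degree $\ge[p]+1-k$ coming from the truncation (decay $\omega^{([p]+1-k)/p}\ge\omega^{\theta-k/p}$ after adjusting $\theta$). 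Your truncation bookkeeping for the identification step is correct in spirit, but the slowly-varying estimate requires this cross-cancellation, not a termwise bound.
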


\begin{remark}
The fact that dominated paths form an algebra does not necessarily follow
from the statement that the iterated integral of two dominated paths is
canonically defined. This depends on the definition of the formal integral $%
\iint_{0<u_{1}<u_{2}<T}\delta g_{0,u_{1}}\otimes \delta g_{0,u_{2}}$ (see
Condition \ref{Condition g satisfies differential equation} and the
discussion thereafter). The reason is that the integration by parts formula
may not hold:%
\begin{equation}
\tiint\nolimits_{0<u_{1}<u_{2}<T}\delta g_{0,u_{1}}\otimes \delta
g_{0,u_{2}}+\tiint\nolimits_{0<u_{2}<u_{1}<T}\delta g_{0,u_{1}}\otimes
\delta g_{0,u_{2}}\overset{?}{=}g_{0,T}\otimes g_{0,T}\text{, }\forall g\in
C\left( \left[ 0,T\right] ,\mathcal{G}\right) \text{.}
\label{relationship between iterated integral and algebra}
\end{equation}%
When $\mathcal{G}$ is the nilpotent Lie group, $\left( \ref{relationship
between iterated integral and algebra}\right) $ holds; when $\mathcal{G}$ is
the Butcher group, generally $\left( \ref{relationship between iterated
integral and algebra}\right) $ does not hold.
\end{remark}

\begin{proof}
\textit{We define }$\beta \in C\left( \left[ 0,T\right] ,B\left( \mathcal{G}%
_{\left[ p\right] },\mathcal{U}^{1}\otimes \mathcal{U}^{2}\right) \right) $
by, for $a,b\in \mathcal{G}_{\left[ p\right] }$ and $s\in \left[ 0,T\right] $%
, 
\begin{eqnarray}
\beta _{s}\left( a,b\right) &=&\beta _{s}^{1}\left( g_{s},g_{s}^{-1}a\left(
b-1\right) \right) \otimes \tint\nolimits_{0}^{s}\beta _{u}^{2}\left(
g_{u}\right) dg_{u}+\tint\nolimits_{0}^{s}\beta _{u}^{1}\left( g_{u}\right)
dg_{u}\otimes \beta _{s}^{2}\left( g_{s},g_{s}^{-1}a\left( b-1\right) \right)
\label{definition of beta in lemma for algebra} \\
&&+\tsum\limits_{\sigma _{i}\in \mathcal{P}_{\left[ p\right] },\left\vert
\sigma _{1}\right\vert +\left\vert \sigma _{2}\right\vert \leq \left[ p%
\right] }\beta _{s}^{1}\left( g_{s},\cdot \right) \otimes \beta
_{s}^{2}\left( g_{s},\cdot \right) \left( \sigma _{1}\ast \sigma _{2}\right)
\left( g_{s}^{-1}a\left( b-1\right) \right) \text{,}  \notag
\end{eqnarray}%
where $\sigma _{1}\ast \sigma _{2}\in L(\mathcal{V}^{\otimes (|\sigma
_{1}|+|\sigma _{2}|)},\mathcal{V}^{\otimes |\sigma _{1}|}\otimes \mathcal{V}%
^{\otimes |\sigma _{2}|})$ is defined in Condition \ref{Condition P is an
algebra} and $\beta _{s}^{1}\left( g_{s},\cdot \right) \otimes \beta
_{s}^{2}\left( g_{s},\cdot \right) $ denotes the unique continuous linear
mapping from $T^{\left( \left[ p\right] \right) }\left( \mathcal{V}\right)
^{\otimes 2}$ to $\mathcal{U}^{1}\otimes \mathcal{U}^{2}$ satisfying 
\begin{equation*}
\beta _{s}^{1}\left( g_{s},\cdot \right) \otimes \beta _{s}^{2}\left(
g_{s},\cdot \right) \left( v_{1}\otimes v_{2}\right) =\beta _{s}^{1}\left(
g_{s},v_{1}\right) \otimes \beta _{s}^{2}\left( g_{s},v_{2}\right) ,\forall
v_{1},v_{2}\in T^{\left( \left[ p\right] \right) }\left( \mathcal{V}\right) .
\end{equation*}%
Since there exists a unique multiplicative function associated with an
almost multiplicative function \cite{lyons1998differential}, we have 
\begin{equation}
\int_{0}^{t}\beta _{u}^{1}\left( g_{u}\right) dg_{u}\otimes
\int_{0}^{t}\beta _{u}^{2}\left( g_{u}\right) dg_{u}=\int_{0}^{t}\beta
_{u}\left( g_{u}\right) dg_{u}\text{, }\forall t\in \left[ 0,T\right] \text{.%
}  \label{Relation algebra}
\end{equation}

When proving that $\beta $ satisfies the slowly-varying condition, there is
an extra term caused by truncation up to level $\left[ p\right] $ in $\left( %
\ref{definition of beta in lemma for algebra}\right) $. The extra term can
be derived by using that for $\tau _{i},\sigma _{i}\in \mathcal{P}_{\left[ p%
\right] }$ and $a,b\in \mathcal{G}_{\left[ p\right] }$, ($\tau _{1}\otimes
\tau _{2}$ denotes the continuous linear mapping that satisfies $\left( \tau
_{1}\otimes \tau _{2}\right) \left( a\right) :=\tau _{1}\left( a\right)
\otimes \tau _{2}\left( a\right) $, $\forall a\in \mathcal{G}_{\left[ p%
\right] }$, and $\left( a_{1}\otimes a_{2}\right) \left( b_{1}\otimes
b_{2}\right) :=\left( a_{1}b_{1}\right) \otimes \left( a_{2}b_{2}\right) $)%
\begin{eqnarray*}
&&\left( \tau _{1}\otimes \tau _{2}\right) \left( a\left( b-1\right) \right)
\\
&=&\tsum\nolimits_{\sigma _{i}\in \mathcal{P}_{\left[ p\right] }}c\left(
\tau _{1},\rho _{1},\sigma _{1}\right) c\left( \tau _{2},\rho _{2},\sigma
_{2}\right) \left( \rho _{1}\left( a\right) \otimes \rho _{2}\left( a\right)
\right) \left( \left( \sigma _{1}\otimes \sigma _{2}\right) \left(
b-1\right) \right) \text{,} \\
&&\left( a\otimes a\right) \left( \left( \sigma _{1}\otimes \sigma
_{2}\right) \left( b-1\right) \right) \\
&=&\tsum\nolimits_{\tau _{i}\in \mathcal{P}_{\left[ p\right] }}c\left( \tau
_{1},\rho _{1},\sigma _{1}\right) c\left( \tau _{2},\rho _{2},\sigma
_{2}\right) \left( \rho _{1}\left( a\right) \otimes \rho _{2}\left( a\right)
\right) \left( \left( \sigma _{1}\otimes \sigma _{2}\right) \left(
b-1\right) \right) \text{,}
\end{eqnarray*}%
where the constant $c\left( \tau _{i},\rho _{i},\sigma _{i}\right) $, $i=1,2$%
, denotes the number of $\rho _{i}\otimes \sigma _{i}$ in the
comultiplication of $\tau _{i}$. Hence 
\begin{eqnarray*}
&&\tsum\nolimits_{\left\vert \tau _{1}\right\vert +\left\vert \tau
_{2}\right\vert \geq \left[ p\right] +1}\left( \tau _{1}\otimes \tau
_{2}\right) \left( a\left( b-1\right) \right) -\tsum\nolimits_{\left\vert
\sigma _{1}\right\vert +\left\vert \sigma _{2}\right\vert \geq \left[ p%
\right] +1}\left( a\otimes a\right) \left( \left( \sigma _{1}\otimes \sigma
_{2}\right) \left( b-1\right) \right) \\
&=&\tsum\nolimits_{\left\vert \tau _{1}\right\vert +\left\vert \tau
_{2}\right\vert \geq \left[ p\right] +1,\left\vert \sigma _{1}\right\vert
+\left\vert \sigma _{2}\right\vert \leq \left[ p\right] }c\left( \tau
_{1},\rho _{1},\sigma _{1}\right) c\left( \tau _{2},\rho _{2},\sigma
_{2}\right) \left( \rho _{1}\left( a\right) \otimes \rho _{2}\left( a\right)
\right) \left( \left( \sigma _{1}\otimes \sigma _{2}\right) \left(
b-1\right) \right) \text{.}
\end{eqnarray*}

Then, for $s<t$ and $a\in \mathcal{G}_{\left[ p\right] }$, it can be
computed that 
\begin{eqnarray}
&&\left( \beta _{t}-\beta _{s}\right) \left( g_{t},a\right)
\label{inner representation algebra} \\
&=&\left( \beta _{t}^{1}-\beta _{s}^{1}\right) \left( g_{t},a\right) \otimes
\tint\nolimits_{0}^{s}\beta _{v}^{2}\left( g_{v}\right)
dg_{v}+\tint\nolimits_{0}^{s}\beta _{v}^{1}\left( g_{v}\right) dg_{v}\otimes
\left( \beta _{t}^{2}-\beta _{s}^{2}\right) \left( g_{t},a\right)  \notag \\
&&+\beta _{t}^{1}\left( g_{t},a\right) \otimes \left(
\tint\nolimits_{s}^{t}\beta _{v}^{2}\left( g_{v}\right) dg_{v}-\beta
_{s}^{2}\left( g_{s},g_{s,t}\right) \right) +\left(
\tint\nolimits_{s}^{t}\beta _{v}^{1}\left( g_{v}\right) dg_{v}-\beta
_{s}^{1}\left( g_{s},g_{s,t}\right) \right) \otimes \beta _{t}^{2}\left(
g_{t},a\right)  \notag \\
&&+\left( \beta _{t}^{1}-\beta _{s}^{1}\right) \left( g_{t},a\right) \otimes
\beta _{s}^{2}\left( g_{s},g_{s,t}\right) +\beta _{s}^{1}\left(
g_{s},g_{s,t}\right) \otimes \left( \beta _{t}^{2}-\beta _{s}^{2}\right)
\left( g_{t},a\right)  \notag \\
&&+\tsum\nolimits_{\left\vert \sigma _{1}\right\vert +\left\vert \sigma
_{2}\right\vert \leq \left[ p\right] }\left( \beta _{t}^{1}\left(
g_{t},\cdot \right) \otimes \beta _{t}^{2}\left( g_{t},\cdot \right) -\beta
_{s}^{1}\left( g_{t},\cdot \right) \otimes \beta _{s}^{2}\left( g_{t},\cdot
\right) \right) \left( \left( \sigma _{1}\otimes \sigma _{2}\right) \left(
a-\sigma _{0}\left( a\right) \right) \right)  \notag \\
&&+\tsum\nolimits_{\substack{ \left\vert \tau _{1}\right\vert +\left\vert
\tau _{2}\right\vert \geq \left[ p\right] +1  \\ \left\vert \sigma
_{1}\right\vert +\left\vert \sigma _{2}\right\vert \leq \left[ p\right] }}%
c\left( \tau _{1},\rho _{1},\sigma _{1}\right) c\left( \tau _{2},\rho
_{2},\sigma _{2}\right) \beta _{s}^{1}\left( g_{s},\rho _{1}\left(
g_{s,t}\right) \cdot \right) \otimes \beta _{s}^{2}\left( g_{s},\rho
_{2}\left( g_{s,t}\right) \cdot \right) \left( \left( \sigma _{1}\otimes
\sigma _{2}\right) \left( a-\sigma _{0}\left( a\right) \right) \right) \text{%
.}  \notag
\end{eqnarray}%
We represent $\sigma _{1}\otimes \sigma _{2}$ as a continuous linear mapping 
$\sigma _{1}\ast \sigma _{2}$ by using Condition \ref{Condition P is an
algebra}. Similar as in the proof of Proposition \ref{Proposition
enhancement}, we have $\left( \ref{continuity of one-forms in algebra}%
\right) $ holds based on Condition \ref{Condition smallest Banach space}, $%
\left( \ref{inner representation algebra}\right) $, $\left( \ref{estimate of
integral in terms of one-forms}\right) $, $\left( \ref{Lipschitz continuity
of the integral in terms of one-forms}\right) $ and $\left\Vert \rho \left(
g_{s,t}\right) \right\Vert \leq \left\Vert g\right\Vert _{p-var,\left[ s,t%
\right] }^{\left\vert \rho \right\vert }$.
\end{proof}

\begin{remark}
In $\left( \ref{definition of beta in lemma for algebra}\right) $, $\beta $
is defined as the sum of three time-varying cocyclic one-forms (denoted by $%
(\eta ^{i})_{i=1,2,3}$). Although $\beta $ is slowly-varying as we proved
above, $(\eta ^{i})_{i=1,2,3}$ are generally not slowly-varying. Roughly
speaking, the difference between $\eta _{t}^{i}$ and $\eta _{s}^{i}$ for $%
s<t $ would be comparable to $\left\Vert g\right\Vert _{p-var,\left[ s,t%
\right] } $ that is not (slow) enough to integrate against $g$ when $p\geq 2$%
.
\end{remark}

\begin{remark}
Proposition \ref{Proposition Algebra} is a special (important) case of
Proposition \ref{Proposition Composition}.
\end{remark}

\subsection{Composition}

For $\gamma >0$, let $\lfloor \gamma \rfloor $ denote the largest integer
which is strictly less than $\gamma $. Let $\mathcal{U}$ and $\mathcal{W}$
be two Banach spaces. We say $f\in C^{\gamma }\left( \mathcal{U},\mathcal{W}%
\right) $, if $f:\mathcal{U}\rightarrow \mathcal{W}$ is $\lfloor \gamma
\rfloor $-times Fr\'{e}chet differentiable and%
\begin{equation}
\left\Vert f\right\Vert _{\limfunc{Lip}\left( \gamma \right)
,R}:=\sup_{x\neq y,\left\Vert x\right\Vert \vee \left\Vert y\right\Vert \leq
R}\frac{\left\Vert \left( D^{\lfloor \gamma \rfloor }f\right) \left(
x\right) -\left( D^{\lfloor \gamma \rfloor }f\right) \left( y\right)
\right\Vert }{\left\Vert x-y\right\Vert ^{\gamma -\lfloor \gamma \rfloor }}%
\leq C_{R}\text{, }\forall R>0\text{.}  \label{definition of Cgamma}
\end{equation}

\begin{proposition}[Composition]
\label{Proposition Composition}\textit{Let }$g\in C^{p-var}\left( \left[ 0,T%
\right] ,\mathcal{G}_{\left[ p\right] }\right) $\textit{, and suppose }$%
X_{\cdot }:=\int_{0}^{\cdot }\beta _{u}\left( g_{u}\right) dg_{u}:\left[ 0,T%
\right] \rightarrow \mathcal{U}$\textit{\ is a dominated path satisfying }$%
\left\Vert \beta \right\Vert _{\omega }^{\theta }<\infty $ for control $%
\omega $ and $\theta >1$\textit{. For Banach space }$\mathcal{W}$\textit{\
and }$f\in C^{\gamma }\left( \mathcal{U},\mathcal{W}\right) $\textit{, }$%
\gamma >p$\textit{,} \textit{there exists a dominated path }$\int_{0}^{\cdot
}\hat{\beta}\left( g\right) dg:\left[ 0,T\right] \rightarrow \mathcal{W}$%
\textit{\ such that with control }$\hat{\omega}=\omega +\left\Vert
g\right\Vert _{p-var}^{p}$ and $\hat{\theta}:=\min \left( \theta ,\frac{%
\gamma }{p},\frac{\left[ p\right] +1}{p}\right) $,%
\begin{equation}
\left\Vert \hat{\beta}\right\Vert _{\hat{\theta}}^{\hat{\omega}}\leq C_{p,%
\hat{\omega}\left( 0,T\right) }\left\Vert f\right\Vert _{\limfunc{Lip}\left(
\gamma \right) ,\left\Vert X\right\Vert _{\infty }}\max \left( \left\Vert
\beta \right\Vert _{\theta }^{\omega },\left( \left\Vert \beta \right\Vert
_{\theta }^{\omega }\right) ^{\left[ p\right] }\right)
\label{continuity of one-forms in composition}
\end{equation}%
where $\left\Vert X\right\Vert _{\infty }:=\sup\nolimits_{t\in \left[ 0,T%
\right] }\left\Vert X_{t}\right\Vert $, and 
\begin{equation*}
\int_{0}^{t}\hat{\beta}_{u}\left( g_{u}\right) dg_{u}=f\left( X_{t}\right)
-f\left( 0\right) ,\forall 0\leq t\leq T.
\end{equation*}
\end{proposition}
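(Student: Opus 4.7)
The plan is to guess $\hat\beta$ from a local Taylor expansion of $f$ around $X_s$ combined with the coalgebra structure supplied by Condition \ref{Condition P is an algebra}, verify that this candidate is slowly varying against $g$, and finally identify its indefinite integral with $f(X_\cdot)-f(0)$ via the uniqueness built into Theorem \ref{Theorem integrating slow varying cocyclic one forms}. Concretely, for $k=1,\dots,[p]$ set
\[
\Psi^{(k)}\;:=\;\sum_{\substack{\sigma_1,\dots,\sigma_k\in\mathcal{P}_{[p]}\\ |\sigma_i|\ge1,\;|\sigma_1|+\cdots+|\sigma_k|\le [p]}}\sigma_1\ast\cdots\ast\sigma_k\;\in\;L\bigl(T^{([p])}(\mathcal{V}),\,T^{([p])}(\mathcal{V})^{\otimes k}\bigr),
\]
so that, for $a\in\mathcal{G}_{[p]}$, $\Psi^{(k)}(a)$ is the truncation of $a^{\otimes k}$ to pure-tensor components of total degree at most $[p]$. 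Define
\[
\alpha_s(v)\;:=\;\sum_{k=1}^{[p]}\frac{(D^kf)(X_s)}{k!}\bigl((\beta_s(g_s,\cdot))^{\otimes k}\circ\Psi^{(k)}\bigr)(v),\qquad v\in T^{([p])}(\mathcal{V}),
\]
and let $\hat\beta_s(a,b):=\alpha_s\bigl(g_s^{-1}a(b-1)\bigr)$. Viewing $\mathcal{W}$ as an abelian topological group under $+$, a direct check (equivalently, Proposition \ref{Proposition equivalent definition of cocyclic one-form}) gives $\hat\beta_s\in B(\mathcal{G}_{[p]},\mathcal{W})$.

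The uniform size bound $\sup_s\|\hat\beta_s(g_s,\cdot)\|\lesssim \|f\|_{\mathrm{Lip}(\gamma),\|X\|_\infty}\max\bigl(\|\beta\|_\theta^\omega,(\|\beta\|_\theta^\omega)^{[p]}\bigr)$ is immediate from the definition together with $\|X\|_\infty<\infty$ (continuity of $X$) and the boundedness of $D^kf$ on the range of $X$. For the time regularity I would perform a telescoping decomposition of $(\hat\beta_t-\hat\beta_s)(g_t,\cdot)$ which isolates, in each summand, exactly one factor of either $D^jf(X_t)-D^jf(X_s)$ or $\beta_t(g_t,\cdot)-\beta_s(g_t,\cdot)$, with all other factors frozen at $s$ or $t$. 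The first kind is controlled by $\|f\|_{\mathrm{Lip}(\gamma)}\|X_t-X_s\|^{\min(1,\gamma-j)}\lesssim\hat\omega(s,t)^{(\gamma-j)/p\wedge 1}$ via \eqref{Lipschitz continuity of the integral in terms of one-forms}, the second by $\|\beta\|_\theta^\omega\,\hat\omega(s,t)^{\theta-|\sigma_i|/p}$, and the remaining inert factors contribute $\|g\|_{p\text{-var},[s,t]}^{|\sigma_i|}\le\hat\omega(s,t)^{|\sigma_i|/p}$ apiece. A degree-count inside $\Psi^{(k)}$ applied to $v\in\mathcal{V}^{\otimes k}$ then matches the required homogeneity and yields $\|(\hat\beta_t-\hat\beta_s)(g_t,\cdot)\|_k\le C\,\hat\omega(s,t)^{\hat\theta-k/p}$, with $\hat\theta=\min(\theta,\gamma/p,([p]+1)/p)$ reflecting respectively the three natural sources of error: the mismatch $X_t-X_s-\beta_s(g_s,g_{s,t})$, the H\"older regularity of $D^{\lfloor\gamma\rfloor}f$, and the degree-$[p]$ truncation inside $\Psi^{(k)}$. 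This verifies Condition \ref{Condition of integrable beta} and delivers \eqref{continuity of one-forms in composition}.

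Once $\hat\beta$ is known to be slowly varying, Theorem \ref{Theorem integrating slow varying cocyclic one forms} produces $\hat h_t:=\int_0^t\hat\beta_u(g_u)dg_u$ satisfying $\hat h_t-\hat h_s=\hat\beta_s(g_s,g_{s,t})+O(\hat\omega(s,t)^{\hat\theta})$. On the other hand, substituting $\Psi^{(k)}(g_{s,t})=g_{s,t}^{\otimes k}$ modulo components of total degree $>[p]$ (an error of order $\|g\|_{p\text{-var},[s,t]}^{[p]+1}$), replacing $\beta_s(g_s,g_{s,t})$ by $X_t-X_s$ up to an $O(\hat\omega(s,t)^\theta)$ error via \eqref{estimate of integral in terms of one-forms}, and using the Taylor expansion of $f$ at $X_s$ (whose remainder is $O(\|X_t-X_s\|^\gamma)\lesssim\hat\omega(s,t)^{\gamma/p}$) give $f(X_t)-f(X_s)=\hat\beta_s(g_s,g_{s,t})+O(\hat\omega(s,t)^{\hat\theta})$. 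Since $\hat\theta>1$, both $\hat h_t-\hat h_0$ and $f(X_t)-f(0)$ arise as limits of the same telescoping Riemann sum along $\hat\beta$; the uniqueness embedded in the proof of Theorem \ref{Theorem integrating slow varying cocyclic one forms} forces $\hat h_t=f(X_t)-f(0)$. The main technical obstacle is the bookkeeping inside the telescoping decomposition of $\hat\beta_t-\hat\beta_s$: one must keep track of factors of $\|\beta\|_\theta^\omega$ raised to powers up to $[p]$ and verify that the worst exponent of $\hat\omega(s,t)$ is exactly $\hat\theta-k/p$, which is where the threshold $\gamma>p$ and the precise form of $\hat\theta$ are used essentially.
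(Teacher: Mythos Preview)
Your candidate $\hat\beta$ is exactly the one the paper writes down, and your identification $\int_0^t\hat\beta(g)\,dg=f(X_t)-f(0)$ via uniqueness of the multiplicative functional is correct and is also how the paper argues.

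The gap is in the slowly-varying estimate. A telescoping that leaves summands carrying a \emph{bare} factor $D^{l}f(X_t)-D^{l}f(X_s)$ is too crude as soon as $[p]\ge 2$. For $l\le[p]-1$ this difference is only first order, so $\|D^{l}f(X_t)-D^{l}f(X_s)\|\lesssim\|X_t-X_s\|\lesssim\hat\omega(s,t)^{1/p}$; your displayed bound $\hat\omega^{(\gamma-l)/p\wedge1}$ is a slip, since the cap at $1$ belongs on the exponent of $\|X_t-X_s\|$, not of $\hat\omega$, and one only obtains $\hat\omega^{\min(1,\gamma-l)/p}$. Now take $l=1$ and $v\in\mathcal V^{\otimes 1}$, where $\Psi^{(1)}$ is the identity and no degree count helps: this summand contributes $\hat\omega(s,t)^{1/p}$ to $\|(\hat\beta_t-\hat\beta_s)(g_t,\cdot)\|_1$, whereas the target exponent is $\hat\theta-1/p$. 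Since $\hat\theta>1$ and $p\ge 2$, one has $\hat\theta-1/p>1-1/p\ge 1/p$, and the bound fails.

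The missing idea is \emph{cross-level} cancellation. In $(\hat\beta_t-\hat\beta_s)(g_t,a)$ one must pair the level-$l$ term at time $t$ not with the level-$l$ term at time $s$ alone, but with the contributions from all levels $l,l+1,\dots,[p]$ at time $s$ that emerge after expanding $\Psi^{(l+j)}(g_{s,t}a)$ and substituting $\beta_s(g_s,g_{s,t})$ for $X_t-X_s$. After this regrouping the coefficient in front of $\beta_t(g_t,\cdot)^{\otimes l}L^l(a)$ becomes the full Taylor remainder
\[
(D^{l}f)(X_t)-\sum_{j=0}^{[p]-l}\tfrac{1}{j!}(D^{l+j}f)(X_s)(X_t-X_s)^{\otimes j}
\;=\;O\bigl(\|X_t-X_s\|^{\gamma\wedge([p]+1)-l}\bigr)
\;\lesssim\;\hat\omega(s,t)^{\hat\theta-l/p},
\]
which is exactly what is required. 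The paper implements this via an explicit four-line decomposition of $(\hat\beta_t-\hat\beta_s)(g_t,a)$ with auxiliary operators $L^l,R^l$; the remaining three lines absorb respectively the gap $(X_t-X_s)^{\otimes j}-\beta_s(g_s,g_{s,t})^{\otimes j}$, the one-form increment $(\beta_t-\beta_s)(g_t,\cdot)$, and the degree-$[p]$ truncation remainder. You correctly anticipate these three mechanisms when motivating the form of $\hat\theta$, but without the Taylor regrouping the $Df$-difference term alone already breaks the estimate.
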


\begin{proof}
We rescale $f$ by $\left\Vert f\right\Vert _{\limfunc{Lip}\left( \gamma
\right) ,\left\Vert X\right\Vert _{\infty }}^{-1}$ and assume $\left\Vert
f\right\Vert _{\limfunc{Lip}\left( \gamma \right) ,\left\Vert X\right\Vert
_{\infty }}=1$.

Define $\hat{\beta}\in C(\left[ 0,T\right] ,B(\mathcal{G}_{\left[ p\right] },%
\mathcal{W}))$ by, for $a,b\in \mathcal{G}_{\left[ p\right] }$ and $s\in %
\left[ 0,T\right] $, 
\begin{equation}
\hat{\beta}_{s}(a,b):=\tsum\nolimits_{l=1}^{\left[ p\right] }\frac{1}{l!}%
\left( D^{l}f\right) \left( X_{s}\right) \tsum\limits_{\sigma _{i}\in 
\mathcal{P}_{\left[ p\right] },\left\vert \sigma _{1}\right\vert +\cdots
+\left\vert \sigma _{l}\right\vert \leq \left[ p\right] }\beta _{s}\left(
g_{s},\cdot \right) ^{\otimes l}\left( \sigma _{1}\ast \cdots \ast \sigma
_{l}\right) \left( g_{s}^{-1}a\left( b-1\right) \right) \text{,}
\label{inner definition of one-form in composition}
\end{equation}%
where $\sigma _{1}\ast \cdots \ast \sigma _{l}\in L(\mathcal{V}^{\otimes
(|\sigma _{1}|+\cdots +|\sigma _{l}|)},\mathcal{V}^{\otimes |\sigma
_{1}|}\otimes \cdots \otimes \mathcal{V}^{\otimes |\sigma _{l}|})$ is
defined in Condition \ref{Condition P is an algebra} and\ $\beta _{s}\left(
g_{s},\cdot \right) ^{\otimes l}$ denotes the unique continuous linear
mapping from $T^{\left( \left[ p\right] \right) }\left( \mathcal{V}\right)
^{\otimes l}$ to $\mathcal{U}^{\otimes l}$ satisfying 
\begin{equation*}
\beta _{s}\left( g_{s},\cdot \right) ^{\otimes l}\left( v_{1}\otimes \cdots
\otimes v_{l}\right) =\beta _{s}\left( g_{s},v_{1}\right) \otimes \cdots
\otimes \beta _{s}\left( g_{s},v_{l}\right) ,\text{ }\forall v_{i}\in
T^{\left( \left[ p\right] \right) }\left( \mathcal{V}\right) ,\text{ }%
i=1,\dots ,l\text{.}
\end{equation*}%
Since the multiplicative function associated with an almost multiplicative
function is unique \cite{lyons1998differential}, we have $f\left(
X_{t}\right) =f\left( 0\right) +\tint\nolimits_{0}^{t}\hat{\beta}_{u}\left(
g_{u}\right) dg_{u}$, $\forall t\in \left[ 0,T\right] $. Then we check that $%
\hat{\beta}$ satisfies the slowly-varying condition.\ As in the proof of
Proposition \ref{Proposition Algebra}, the extra term caused by truncation
up to level $\left[ p\right] $ in $\left( \ref{inner definition of one-form
in composition}\right) $\ can be estimated based on the formula that for $%
\tau _{i},\sigma _{i}\in \mathcal{P}_{\left[ p\right] }$ and $a,b\in 
\mathcal{G}_{\left[ p\right] }$,%
\begin{eqnarray*}
&&\tsum\nolimits_{\left\vert \tau _{1}\right\vert +\cdots +\left\vert \tau
_{l}\right\vert \geq \left[ p\right] +1}\left( \tau _{1}\otimes \cdots
\otimes \tau _{l}\right) \left( a\left( b-1\right) \right)
-\tsum\nolimits_{\left\vert \sigma _{1}\right\vert +\cdots +\left\vert
\sigma _{l}\right\vert \geq \left[ p\right] +1}a^{\otimes l}\left( \left(
\sigma _{1}\otimes \cdots \otimes \sigma _{l}\right) \left( b-1\right)
\right) \\
&=&\tsum\nolimits_{\substack{ \left\vert \tau _{1}\right\vert +\cdots
+\left\vert \tau _{l}\right\vert \geq \left[ p\right] +1  \\ \left\vert
\sigma _{1}\right\vert +\cdots +\left\vert \sigma _{l}\right\vert \leq \left[
p\right] }}c\left( \tau _{1},\rho _{1},\sigma _{1}\right) \cdots c\left(
\tau _{l},\rho _{l},\sigma _{l}\right) \left( \rho _{1}\left( a\right)
\otimes \cdots \otimes \rho _{l}\left( a\right) \right) \left( \left( \sigma
_{1}\otimes \cdots \otimes \sigma _{l}\right) \left( b-1\right) \right) 
\text{,}
\end{eqnarray*}%
where $c\left( \tau _{i},\rho _{i},\sigma _{i}\right) $ denotes the number
of $\rho _{i}\otimes \sigma _{i}$ in the coproduct of $\tau _{i}$.

Hence, for $s<t$ and $a\in \mathcal{G}_{\left[ p\right] }$, it can be
computed that 
\begin{eqnarray}
&&\left( \hat{\beta}_{t}-\hat{\beta}_{s}\right) \left( g_{t},a\right)
\label{inner composition linear} \\
&=&\tsum\nolimits_{l=1}^{\left[ p\right] }\frac{1}{l!}\left( \left(
D^{l}f\right) \left( X_{t}\right) -\tsum\nolimits_{j=0}^{\left[ p\right] -l}%
\frac{1}{j!}\left( D^{l+j}f\right) \left( X_{s}\right) \left(
X_{t}-X_{s}\right) ^{\otimes j}\right) \beta _{t}\left( g_{t},\cdot \right)
^{\otimes l}L^{l}\left( a\right)  \notag \\
&&+\tsum\nolimits_{l=1}^{\left[ p\right] }\tsum\nolimits_{j=0}^{\left[ p%
\right] -l}\frac{1}{l!}\frac{1}{j!}\left( D^{l+j}f\right) \left(
X_{s}\right) \left( \left( X_{t}-X_{s}\right) ^{\otimes j}-\left( \beta
_{s}\left( g_{s},g_{s,t}\right) \right) ^{\otimes j}\right) \beta _{t}\left(
g_{t},\cdot \right) ^{\otimes l}L^{l}\left( a\right)  \notag \\
&&+\tsum\nolimits_{l=1}^{\left[ p\right] }\frac{1}{l!}\left( D^{l}f\right)
\left( X_{s}\right) \left( \left( \beta _{s}\left( g_{s},g_{s,t}\right)
+\beta _{t}\left( g_{t},\cdot \right) \right) ^{\otimes l}-\left( \beta
_{s}\left( g_{s},g_{s,t}\right) +\beta _{s}\left( g_{t},\cdot \right)
\right) ^{\otimes l}\right) L^{l}\left( a\right)  \notag \\
&&+\tsum\nolimits_{l=1}^{\left[ p\right] }\frac{1}{l!}\left( D^{l}f\right)
\left( X_{s}\right) \beta _{s}\left( g_{s},\cdot \right) ^{\otimes
l}R^{l}\left( a\right) \text{,}  \notag
\end{eqnarray}%
where%
\begin{eqnarray*}
L^{l}\left( a\right) &:&=\tsum\nolimits_{\left\vert \sigma _{1}\right\vert
+\cdots +\left\vert \sigma _{l}\right\vert \leq \left[ p\right] }\left(
\sigma _{1}\otimes \cdots \otimes \sigma _{l}\right) \left( a\right) \text{,}
\\
R^{l}\left( a\right) &:&=\tsum\nolimits_{\substack{ \left\vert \tau
_{1}\right\vert +\cdots +\left\vert \tau _{l}\right\vert \geq \left[ p\right]
+1  \\ \left\vert \sigma _{1}\right\vert +\cdots +\left\vert \sigma
_{l}\right\vert \leq \left[ p\right] }}c\left( \tau _{1},\rho _{1},\sigma
_{1}\right) \cdots c\left( \tau _{l},\rho _{l},\sigma _{l}\right) \left(
\rho _{1}\left( g_{s,t}\right) \otimes \cdots \otimes \rho _{l}\left(
g_{s,t}\right) \right) \left( \left( \sigma _{1}\otimes \cdots \otimes
\sigma _{l}\right) \left( a-1\right) \right) \text{.}
\end{eqnarray*}%
Based on Condition \ref{Condition smallest Banach space} and Condition \ref%
{Condition P is an algebra}, we represent $L^{l}\left( \cdot \right) $ and $%
R^{l}\left( \cdot \right) $ as continuous linear mappings on $T^{\left( %
\left[ p\right] \right) }\left( \mathcal{V}\right) $ and extend $\left( \ref%
{inner composition linear}\right) $ from $\mathcal{G}_{\left[ p\right] }$ to 
$T^{\left( \left[ p\right] \right) }\left( \mathcal{V}\right) $. By using
Taylor's Theorem, estimate $\left( \ref{estimate of integral in terms of
one-forms}\right) $, $\left( \ref{Lipschitz continuity of the integral in
terms of one-forms}\right) $, the slow-varying property of $\beta $, and
that $\left\Vert \rho \left( g\right) \right\Vert \leq \left\Vert
g\right\Vert _{p-var,\left[ s,t\right] }^{\left\vert \rho \right\vert }$, we
have $\left( \ref{continuity of one-forms in composition}\right) $ holds.
\end{proof}

\subsection{Transitivity}

Let $\mathcal{U}$ be a Banach space. Suppose the multiplication in the
Banach algebra $T^{([p])}(\mathcal{U})$ is defined by (with $\pi _{k}$
denotes the projection to $\mathcal{U}^{\otimes k}$) $\pi _{k}\left(
ab\right) =\tsum\nolimits_{i=0}^{k}\pi _{i}\left( a\right) \otimes \pi
_{k-i}\left( b\right) $ for $k=0,1,\dots ,\left[ p\right] $ and $a,b\in
T^{([p])}(\mathcal{U})$. Let $1\oplus \mathcal{U}\oplus \cdots \oplus 
\mathcal{U}^{\otimes \left[ p\right] }$ denote the closed topological group
in $T^{([p])}(\mathcal{U})$ defined by%
\begin{equation}
1\oplus \mathcal{U}\oplus \cdots \oplus \mathcal{U}^{\otimes \left[ p\right]
}:=\left\{ a\in T^{([p])}(\mathcal{U})\text{ }\big|\text{ }\pi _{0}\left(
a\right) =1\right\} \text{.}  \label{definition of group 1+U+...+Up}
\end{equation}

\begin{proposition}[Transitivity]
\label{Proposition Transitivity}Let $\gamma _{\cdot }:=\int_{0}^{\cdot
}\beta _{u}\left( g_{u}\right) dg_{u}:\left[ 0,T\right] \rightarrow \mathcal{%
U}$ be a path dominated by $g$ satisfying $\left\Vert \beta \right\Vert
_{\theta }^{\omega }<\infty $ for control $\omega $ and $\theta >1$. Define $%
\Gamma :\left[ 0,T\right] \rightarrow 1\oplus \mathcal{U}\oplus \cdots
\oplus \mathcal{U}^{\otimes \left[ p\right] }$ by 
\begin{equation*}
\Gamma _{t}:=1+\tsum\nolimits_{n=1}^{\left[ p\right] }x_{t}^{n}\text{ with }%
x_{t}^{1}:=\gamma _{t}-\gamma _{0}\text{ and }x_{t}^{n}:=\tint%
\nolimits_{0}^{t}x_{u}^{n-1}\otimes d\gamma _{u}\text{, }n=2,\dots ,\left[ p%
\right]
\end{equation*}%
where the integrals are defined as in Proposition \ref{Proposition
enhancement}.

For Banach space $\mathcal{W}$, suppose $\int_{0}^{\cdot }\zeta _{u}\left(
\Gamma _{u}\right) d\Gamma _{u}:\left[ 0,T\right] \rightarrow \mathcal{W}$
is a path dominated by $\Gamma $ satisfying $\left\Vert \zeta \right\Vert
_{\kappa }^{\rho }<\infty $ for control $\rho $ and $\kappa >1$. Then there
exists a dominated path $\int_{0}^{\cdot }\widetilde{\beta }\left( g\right)
dg:\left[ 0,T\right] \rightarrow \mathcal{W}$ such that with control $\tilde{%
\omega}:=\omega +\rho +\left\Vert g\right\Vert _{p-var}^{p}$ and $\tilde{%
\theta}:=\min \left( \theta ,\kappa \right) >1$,%
\begin{gather}
\left\Vert \widetilde{\beta }\right\Vert _{\tilde{\theta}}^{\tilde{\omega}%
}\leq C_{p,\tilde{\omega}\left( 0,T\right) }\left\Vert \zeta \right\Vert
_{\kappa }^{\rho }\max \left( \left\Vert \beta \right\Vert _{\theta
}^{\omega },\left( \left\Vert \beta \right\Vert _{\theta }^{\omega }\right)
^{\left[ p\right] }\right) \text{,}
\label{continuity of one-forms in transitivity} \\
\text{and }\int_{0}^{t}\widetilde{\beta }\left( g\right)
dg=\int_{0}^{t}\zeta _{u}\left( \Gamma _{u}\right) d\Gamma _{u}\text{, }%
\forall 0\leq t\leq T\text{.}  \notag
\end{gather}
\end{proposition}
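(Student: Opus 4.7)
The plan is to express $\int_0^\cdot \zeta_u(\Gamma_u) d\Gamma_u$ as $\int_0^\cdot \widetilde{\beta}_u(g_u) dg_u$ by constructing $\widetilde{\beta}$ as the composition of $\zeta$ with an algebraic lift of $\beta$ that carries the structure of $\Gamma$ back down to the group $\mathcal{G}_{[p]}$. First I would iterate Proposition \ref{Proposition enhancement}, using Condition \ref{Condition g satisfies differential equation}, to produce for each $n=1,\dots,[p]$ a slowly-varying cocyclic one-form $\beta^{(n)}\in C([0,T],B(\mathcal{G}_{[p]},\mathcal{U}^{\otimes n}))$ such that
\begin{equation*}
x_t^n=\int_0^t \beta^{(n)}_u(g_u)\,dg_u,\qquad \|\beta^{(n)}\|_{\theta}^{\omega}\le C_{p,\omega(0,T)}(\|\beta\|_\theta^\omega)^n.
\end{equation*}
Assembling these into $\Theta_s(a,b):=1+\sum_{n=1}^{[p]}\beta^{(n)}_s(a,b)$ gives a continuous path of cocyclic one-forms from $\mathcal{G}_{[p]}$ to the group $\mathcal{T}_{[p]}(\mathcal{U})=1\oplus\mathcal{U}\oplus\cdots\oplus\mathcal{U}^{\otimes[p]}$, the cocyclic identity at each level being the algebraic shadow of Chen's identity for the iterated integrals $x^n$. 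By construction one has the exact identity $\Theta_s(g_s,g_{s,t})=\Gamma_{s,t}$ along the base path.

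Next I would define
\begin{equation*}
\widetilde{\beta}_s(a,b):=\zeta_s(\Gamma_s,\Theta_s(a,b)),
\end{equation*}
which is a cocyclic one-form on $\mathcal{G}_{[p]}$ with values in $\mathcal{W}$: linearity in the second slot comes from linearity of $\zeta_s(\Gamma_s,\cdot)$ and linearity of each $\beta^{(n)}_s(a,\cdot)$, and the cocyclic identity $\widetilde{\beta}_s(a,bc)=\widetilde{\beta}_s(a,b)\,\widetilde{\beta}_s(ab,c)$ is inherited from the cocyclic identity of $\zeta_s$ applied to the $\mathcal{T}_{[p]}(\mathcal{U})$-valued cocyclic identity of $\Theta_s$.

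Then I would verify that $\widetilde{\beta}$ is slowly-varying against $g$. Decomposing
\begin{equation*}
(\widetilde{\beta}_t-\widetilde{\beta}_s)(g_t,\cdot)=(\zeta_t-\zeta_s)(\Gamma_t,\cdot)\circ\Theta_t(g_t,\cdot)+\zeta_s(\Gamma_s,\cdot)\circ(\Theta_t-\Theta_s)(g_t,\cdot),
\end{equation*}
the first summand is controlled by $\|\zeta\|_\kappa^\rho$ together with the uniform bound on $\Theta_t(g_t,\cdot)$ (itself of size $\lesssim\|\beta\|_\theta^\omega$); the second summand is controlled through the slowly-varying norms of each $\beta^{(n)}$, giving $\omega(s,t)^{\theta-k/p}$-type estimates at level $k$. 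Combining with $\|\beta^{(n)}\|_\theta^\omega\lesssim(\|\beta\|_\theta^\omega)^n$, the factor $\max(\|\beta\|_\theta^\omega,(\|\beta\|_\theta^\omega)^{[p]})$ appears. This yields (\ref{continuity of one-forms in transitivity}) with $\widetilde{\omega}=\omega+\rho+\|g\|_{p-var}^p$ and $\widetilde{\theta}=\min(\theta,\kappa)$. Finally, equality of the two integrals follows by comparing Riemann sums: the identity $\Theta_{t_k}(g_{t_k},g_{t_k,t_{k+1}})=\Gamma_{t_k,t_{k+1}}$ makes the partial products for $\int\widetilde{\beta}(g)dg$ and $\int\zeta(\Gamma)d\Gamma$ equal term by term, so the limits agree by Theorem \ref{Theorem integrating slow varying cocyclic one forms}.

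The main obstacle is the rigorous construction of $\Theta_s$ as a genuine $\mathcal{T}_{[p]}(\mathcal{U})$-valued cocyclic one-form on $\mathcal{G}_{[p]}$, not merely a map whose values happen to lie in $\mathcal{T}_{[p]}(\mathcal{U})$ when restricted to arguments $(g_s,g_{s,t})$. This demands that the iterated application of Proposition \ref{Proposition enhancement} produces one-forms $\beta^{(n)}$ whose cross-compatibility reproduces the multiplication of $\mathcal{T}_{[p]}(\mathcal{U})$ at every level, which is exactly what the mapping $\mathcal{I}$ in Condition \ref{Condition g satisfies differential equation} is designed to encode. Once this algebraic bookkeeping is in place, the analytic steps are routine extensions of the arguments in the proofs of Propositions \ref{Proposition enhancement} and \ref{Proposition Algebra}.
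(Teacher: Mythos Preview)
Your strategy---compose $\zeta_s$ with a lifted one-form for $\Gamma$---matches the paper's, but there are genuine gaps. First, the identity $\Theta_s(g_s,g_{s,t})=\Gamma_{s,t}$ is not exact: each $\beta^{(n)}_s(g_s,g_{s,t})$ is only the one-step approximation to $x^n_{s,t}$, with error $O(\hat\omega(s,t)^\theta)$ by Theorem~\ref{Theorem integrating slow varying cocyclic one forms}, so your Riemann-sum argument for equality of the two integrals fails (the paper invokes uniqueness of the multiplicative functional instead). Second, your decomposition of $(\widetilde\beta_t-\widetilde\beta_s)(g_t,\cdot)$ is not valid as written: the cross term requires shifting the base point of $\zeta_s$ from $\Gamma_t$ to $\Gamma_s$, and simply writing $\zeta_s(\Gamma_s,\cdot)\circ(\Theta_t-\Theta_s)(g_t,\cdot)$ skips this. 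The paper handles it by introducing a \emph{two-parameter} family $B_{s,t}$ (your $\Theta_t$ corresponds to $B_{t,t}$), proving by induction both Chen's identity $\Gamma_{0,s}\Gamma_{s,t}=\Gamma_{0,t}$ and the algebraic relation $B_{s,t}(g_t,\cdot)=\Gamma_{s,t}\,B_{t,t}(g_t,\cdot)$, and then using the cocyclic property~(\ref{equality between two one-forms}) of $\zeta_s$ to obtain $\zeta_s(\Gamma_{0,t},B_{t,t}(g_t,\cdot))=\zeta_s(\Gamma_{0,s},B_{s,t}(g_t,\cdot))$. This turns the second summand into $\zeta_s(\Gamma_{0,s},(B_{s,t}-B_{s,s})(g_t,\cdot))$, whose slow variation in $t$ (with $s$ fixed) is exactly what Proposition~\ref{Proposition enhancement} controls.

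You also omit a grading property that is essential for the first summand: to bound $(\zeta_t-\zeta_s)(\Gamma_t,B_{t,t}(g_t,v))$ by something of order $\rho(s,t)^{\kappa-k/p}$ when $v\in\mathcal V^{\otimes k}$, one needs $B_{t,t}(g_t,\mathcal V^{\otimes k})\subseteq\mathcal U\oplus\cdots\oplus\mathcal U^{\otimes k}$, so that only the level-$j$ norms $\|(\zeta_t-\zeta_s)(\Gamma_t,\cdot)\|_j$ with $j\le k$ enter. The paper proves this by induction from $\mathcal I(\mathcal V^{\otimes k})\subseteq\sum_{j_1+j_2=k,\,j_i\ge 1}\mathcal V^{\otimes j_1}\otimes\mathcal V^{\otimes j_2}$; without it the slowly-varying estimate does not close. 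Finally, the ``main obstacle'' you flag---making $\Theta_s$ a genuine $\mathcal T_{[p]}(\mathcal U)$-valued multiplicative cocyclic one-form---is neither achievable (the truncation terms in~(\ref{beta12 group-valued cocyclic one-form}) obstruct exact multiplicativity) nor needed: the paper's $\widetilde\beta_s(a,b)=\zeta_s(\Gamma_{0,s},B_{s,s}(g_s,g_s^{-1}a(b-1)))$ is additively cocyclic purely by linearity of $B_{s,s}(g_s,\cdot)$ and $\zeta_s(\Gamma_{0,s},\cdot)$, together with $bc-1=(b-1)+b(c-1)$, and the two-parameter device does the rest.
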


\begin{remark}
Whether $\Gamma $ takes values in the step-$\left[ p\right] $ nilpotent Lie
group or not will depend on the mapping $\mathcal{I}$ in Condition \ref%
{Condition g satisfies differential equation}.
\end{remark}

\begin{remark}
We can also define $\tilde{\Gamma}$, taking values in the `\thinspace
infinite-dimensional\thinspace ' Butcher group over\ $\mathcal{U}$, by%
\begin{equation*}
\tilde{\Gamma}_{t}:=1+\tsum\nolimits_{\sigma \in \mathcal{Q}_{\left[ p\right]
}}x_{t}^{\sigma }\text{ with }x_{t}^{\mbox{\thinspace}\Sbullet}:=\gamma
_{t}-\gamma _{0}\text{, }x_{t}^{\sigma _{1}\sigma _{2}}:=x_{t}^{\sigma
_{1}}\otimes x_{t}^{\sigma _{2}}\text{, }x_{t}^{\left[ \sigma _{1}\right]
}:=\tint\nolimits_{0}^{t}x_{u}^{\sigma _{1}}\otimes d\gamma _{u}\text{, }%
\forall \sigma _{1},\sigma _{2}\in \mathcal{Q}_{\left[ p\right] }\text{,}
\end{equation*}%
where $\mathcal{Q}_{\left[ p\right] }$ denotes the set of unlabelled ordered
forests of degree less or equal to $\left[ p\right] $ and $[\sigma _{1}]$
denotes the tree obtained by attaching the forest $\sigma _{1}$ to a new
root. $\tilde{\Gamma}$ is well-defined because the set of dominated paths is
stable under multiplication and iterated integration.
\end{remark}

\begin{proof}
We extend the definition of $\Gamma $ to $\left\{ \left( s,t\right) |0\leq
s\leq t\leq T\right\} $ to make the proof work. 
\begin{equation}
\Gamma _{s,t}:=1+\tsum\nolimits_{n=1}^{\left[ p\right] }x_{s,t}^{n}\text{
with }x_{s,t}^{1}:=\gamma _{t}-\gamma _{s}\text{ and }x_{s,t}^{n}:=\tint%
\nolimits_{s}^{t}x_{s,u}^{n-1}\otimes d\gamma _{u}\text{, }n=2,\dots ,\left[
p\right] \text{.}  \label{Definition of Gamma}
\end{equation}%
We view $\mathcal{U}\oplus \cdots \oplus \mathcal{U}^{\otimes \left[ p\right]
}$ as a Banach space (with norm $\sum_{k=1}^{\left[ p\right] }\left\Vert \pi
_{k}\left( \cdot \right) \right\Vert $), and define $B_{s,t}\in B\left( 
\mathcal{G}_{\left[ p\right] },\mathcal{U}\oplus \cdots \oplus \mathcal{U}%
^{\otimes \left[ p\right] }\right) $ for $s<t$ by%
\begin{equation}
B_{s,t}:=\tsum\nolimits_{n=1}^{\left[ p\right] }\beta _{s,t}^{n}\text{,}
\label{inner definition of Beta s,t}
\end{equation}%
where $\beta _{s,t}^{n}\in B\left( \mathcal{G}_{\left[ p\right] },\mathcal{U}%
^{\otimes n}\right) $ are defined by, for $a,b\in \mathcal{G}_{\left[ p%
\right] }$, (with $\mathcal{I}$ in Condition \ref{Condition g satisfies
differential equation})%
\begin{eqnarray}
\beta _{s,t}^{1}\left( a,b\right) &:&=\beta _{t}\left( a,b\right) \text{,} 
\notag \\
\beta _{s,t}^{n+1}\left( a,b\right) &:&=x_{s,t}^{n}\otimes \beta _{t}\left(
g_{t},g_{t}^{-1}a\left( b-1\right) \right) +\beta _{s,t}^{n}\left(
g_{t},\cdot \right) \otimes \beta _{t}\left( g_{t},\cdot \right) \mathcal{I}%
\left( g_{t}^{-1}a\left( b-1\right) \right) \text{, }n\geq 1\text{.}
\label{inner definition of beta s,u tau bracket}
\end{eqnarray}%
Based on Proposition \ref{Proposition enhancement} and $\Gamma $ defined in $%
\left( \ref{Definition of Gamma}\right) $, it can be proved inductively that 
$\left\Vert \beta _{s,\cdot }^{n}\right\Vert _{\theta }^{\hat{\omega}}\leq
C_{p,\hat{\omega}\left( 0,T\right) }\left( \left\Vert \beta \right\Vert
_{\theta }^{\omega }\right) ^{n}$ for $n=1,\dots ,\left[ p\right] $ with $%
\hat{\omega}=\omega +\left\Vert g\right\Vert _{p-var}^{p}$ for any $s\in %
\left[ 0,T\right] $. Hence,%
\begin{gather}
\left\Vert B_{s,\cdot }\right\Vert _{\theta }^{\hat{\omega}}\leq C_{p,\hat{%
\omega}\left( 0,T\right) }\max \left( \left\Vert \beta \right\Vert _{\theta
}^{\omega },\left( \left\Vert \beta \right\Vert _{\theta }^{\omega }\right)
^{\left[ p\right] }\right) \text{, }\forall s\in \left[ 0,T\right] \text{,}
\label{inner norm of Beta} \\
\Gamma _{s,t}=1+\tint\nolimits_{s}^{t}B_{s,u}\left( g_{u}\right) dg_{u}\text{%
, }\left\Vert \Gamma _{s,t}-1-B_{s,s}\left( g_{s},g_{s,t}\right) \right\Vert
\leq \left\Vert B_{s,\cdot }\right\Vert _{\theta }^{\hat{\omega}}\hat{\omega}%
\left( s,t\right) ^{\theta }\text{, }\forall 0\leq s\leq t\leq T\text{.}
\label{inner transitivity Eta is close to Gamma}
\end{gather}

Then we prove a simple property of $B_{t,t}$ that for $k=1,\dots ,\left[ p%
\right] $,%
\begin{equation}
B_{t,t}\left( g_{t},v\right) \in \mathcal{U}\oplus \cdots \oplus \mathcal{U}%
^{\otimes k}\text{, }\forall v\in \mathcal{V}^{\otimes k}\text{, }\forall t%
\text{.}  \label{inner transitivity property of Hs}
\end{equation}%
Equivalently, we prove that, for $n=1,\dots ,\left[ p\right] $,%
\begin{equation}
\text{ }\beta _{t,t}^{n}\left( g_{t},v\right) =0\text{, }\forall v\in 
\mathcal{V}^{\otimes k}\text{, }k=0,1,\dots ,n-1\text{, }\forall t\text{,}
\label{inner property eta 1}
\end{equation}%
which holds for $n=1$. Suppose $\left( \ref{inner property eta 1}\right) $
holds for some $n=1,\dots ,\left[ p\right] -1$. Then, by using the property
in Condition \ref{Condition g satisfies differential equation} that $%
\mathcal{I}\left( \mathcal{V}^{\otimes k}\right) \subseteq
\tsum\nolimits_{j_{i}\geq 1,j_{1}+j_{2}=k}\mathcal{V}^{\otimes j_{1}}\otimes 
\mathcal{V}^{\otimes j_{2}}$ and by using the inductive hypothesis, we have
that, if $\beta _{t,t}^{n+1}\left( g_{t},v\right) =\beta _{t,t}^{n}\left(
g_{t},\cdot \right) \otimes \beta _{t}\left( g_{t},\cdot \right) \mathcal{I}%
\left( v\right) \neq 0$ for some $v\in \mathcal{V}^{\otimes k}$, then $%
k=j_{1}+j_{2}$ for some $j_{1}\geq n$ and $j_{2}\geq 1$, which implies $%
k\geq n+1$ and the induction is complete.

Based on the definition of $\beta _{s,t}^{n+1}$ in $\left( \ref{inner
definition of beta s,u tau bracket}\right) $, it can be proved by induction
that%
\begin{equation}
\beta _{s,t}^{n}\left( g_{t},\cdot \right)
=\tsum\nolimits_{i=1}^{n}x_{s,t}^{n-i}\otimes \beta _{t,t}^{i}\left(
g_{t},\cdot \right) \text{, }\forall v\in T^{(\left[ p\right] )}\left( 
\mathcal{V}\right) \text{.}
\label{inner relationship between two set of one-forms}
\end{equation}%
Hence the relationship holds:%
\begin{equation}
B_{s,t}\left( g_{t},\cdot \right) =\Gamma _{s,t}B_{t,t}\left( g_{t},\cdot
\right) \text{, }\forall 0\leq s\leq t\leq T\text{,}
\label{inner transitivity relationship between Beta and Eta}
\end{equation}%
where the multiplication between $\Gamma _{s,t}$ and $B_{t,t}\left(
g_{t},\cdot \right) $ is in the algebra $T^{([p])}(\mathcal{U})$.

Then we prove 
\begin{equation}
\Gamma _{0,s}\Gamma _{s,t}=\Gamma _{0,t}\text{, }\forall 0\leq s\leq t\leq T%
\text{.}  \label{inner transitivity Gamma}
\end{equation}%
Equivalently, we prove,%
\begin{equation}
x_{0,t}^{n}=\tsum\nolimits_{i=0}^{n}x_{0,s}^{n-i}\otimes x_{s,t}^{i}\text{, }%
n=1,\dots ,\left[ p\right] \text{, }0\leq s\leq t\leq T\text{,}
\label{inner transitivity}
\end{equation}%
which holds when $n=1$. Suppose $\left( \ref{inner transitivity}\right) $
holds for some $n=1,\dots ,\left[ p\right] -1$. Based on $\left( \ref{inner
relationship between two set of one-forms}\right) $ and the inductive
hypothesis $\left( \ref{inner transitivity}\right) $, we have%
\begin{equation}
\beta _{0,t}^{n}\left( g_{t},\cdot \right)
=\tsum\nolimits_{l=1}^{n}x_{0,s}^{n-l}\otimes \beta _{s,t}^{l}\left(
g_{t},\cdot \right) \text{, }\forall 0\leq s\leq t\leq T\text{.}
\label{inner Gamma taking values in group 2}
\end{equation}%
Then $\left( \ref{inner transitivity}\right) $ holds for $n+1$ based on the
definition of the integral in Proposition \ref{Proposition enhancement}, the
inductive hypothesis $\left( \ref{inner transitivity}\right) $ and $\left( %
\ref{inner Gamma taking values in group 2}\right) $.

Then we prove that, if a path is dominated by $\Gamma $, then it is
dominated by $g$. Suppose $\tint\nolimits_{0}^{\cdot }\zeta \left( \Gamma
_{0,u}\right) d\Gamma _{0,u}:\left[ 0,T\right] \rightarrow \mathcal{W}$ is a
path dominated by $t\mapsto \Gamma _{0,t}$. Define $\widetilde{\beta }:\left[
0,T\right] \rightarrow B\left( \mathcal{G}_{\left[ p\right] },\mathcal{W}%
\right) $ by%
\begin{equation*}
\widetilde{\beta }_{s}\left( a,b\right) =\zeta _{s}\left( \Gamma
_{0,s},B_{s,s}\left( g_{s},g_{s}^{-1}a\left( b-1\right) \right) \right) 
\text{, }\forall a,b\in \mathcal{G}_{\left[ p\right] }\text{.}
\end{equation*}%
That $\int_{0}^{\cdot }\widetilde{\beta }_{u}\left( g_{u}\right)
dg_{u}=\int_{0}^{\cdot }\zeta _{u}\left( \Gamma _{0,u}\right) d\Gamma _{0,u}$
follows from the uniqueness of the multiplicative function and $\left( \ref%
{inner transitivity Eta is close to Gamma}\right) $. Since $B_{t,t}\left(
g_{t},\cdot \right) $ takes values in $\mathcal{U}\oplus \cdots \oplus 
\mathcal{U}^{\otimes \left[ p\right] }$, by using $\left( \ref{equality
between two one-forms}\right) $, $\left( \ref{inner transitivity
relationship between Beta and Eta}\right) $ and $\left( \ref{inner
transitivity Gamma}\right) $, we have $\zeta _{s}\left( \Gamma
_{0,t},B_{t,t}\left( g_{t},\cdot \right) \right) =\zeta _{s}\left( \Gamma
_{0,s}\Gamma _{s,t},B_{t,t}\left( g_{t},\cdot \right) \right) =\zeta
_{s}\left( \Gamma _{0,s},\Gamma _{s,t}B_{t,t}\left( g_{t},\cdot \right)
\right) =\zeta _{s}\left( \Gamma _{0,s},B_{s,t}\left( g_{t},\cdot \right)
\right) $. Hence, 
\begin{eqnarray*}
&&\left( \widetilde{\beta }_{t}-\widetilde{\beta }_{s}\right) \left(
g_{t},\cdot \right) \\
&=&\left( \zeta _{t}-\zeta _{s}\right) \left( \Gamma _{0,t},B_{t,t}\left(
g_{t},\cdot \right) \right) +\zeta _{s}\left( \Gamma _{0,t},B_{t,t}\left(
g_{t},\cdot \right) \right) -\zeta _{s}\left( \Gamma _{0,s},B_{s,s}\left(
g_{t},\cdot \right) \right) \\
&=&\left( \zeta _{t}-\zeta _{s}\right) \left( \Gamma _{0,t},B_{t,t}\left(
g_{t},\cdot \right) \right) +\zeta _{s}\left( \Gamma _{0,s},\left(
B_{s,t}-B_{s,s}\right) \left( g_{t},\cdot \right) \right) \text{.}
\end{eqnarray*}%
Then based on $\left( \ref{inner transitivity property of Hs}\right) $, the
slowly-varying property of $t\mapsto \zeta _{t}$ and $t\mapsto B_{s,t}$, and 
$\left( \ref{inner norm of Beta}\right) $, we have $\left( \ref{continuity
of one-forms in transitivity}\right) $ holds.
\end{proof}

\section{Rough integration}

Recall that $L\left( \mathcal{V},\mathcal{U}\right) $ denotes the set of
continuous linear mappings from $\mathcal{V}$ to $\mathcal{U}$, and $L\left( 
\mathcal{V},\mathcal{U}\right) $ becomes a Banach space when equipped with
the operator norm. Recall the group $1\oplus \mathcal{U}\oplus \cdots \oplus 
\mathcal{U}^{\otimes \left[ p\right] }$ defined in $\left( \ref{definition
of group 1+U+...+Up}\right) $.

\begin{corollary}[Rough Integration]
\label{Example rough integral}Let $\mathcal{G}_{\left[ p\right] }$ be the
step-$\left[ p\right] $ nilpotent Lie group over the Banach space $\mathcal{V%
}$. For $\gamma >p-1$ and Banach space\ $\mathcal{U}$, suppose $f\in
C^{\gamma }\left( \mathcal{V},L\left( \mathcal{V},\mathcal{U}\right) \right) 
$ (as defined in $\left( \ref{definition of Cgamma}\right) $). For $g\in
C^{p-var}\left( \left[ 0,T\right] ,\mathcal{G}_{\left[ p\right] }\right) $,
we define $\beta \in \left( \left[ 0,T\right] ,B\left( \mathcal{G}_{\left[ p%
\right] },\mathcal{U}\right) \right) $ by (with $x_{s}:=\pi _{1}\left(
g_{s}\right) $ and $\pi _{l}$ denotes the projection to $\mathcal{V}%
^{\otimes l}$)%
\begin{equation}
\beta _{s}\left( a,b\right) :=\sum_{l=0}^{\left[ p\right] -1}\frac{1}{l!}%
\left( D^{l}f\right) \left( x_{s}\right) \pi _{l+1}\left( g_{s}^{-1}a\left(
b-1\right) \right) \text{, \ }\forall a,b\in \mathcal{G}_{\left[ p\right] }%
\text{, \ }\forall s\in \left[ 0,T\right] \text{.}
\label{inner definition rough integral}
\end{equation}%
Then $\int_{0}^{\cdot }\beta _{u}\left( g_{u}\right) dg_{u}$ is a dominated
path. In addition, by using the mapping $\mathcal{I}$ in $\left( \ref%
{definition of I for nilpotent Lie group}\right) $ and the integration in
Proposition \ref{Proposition enhancement}, we define $Y:\left[ 0,T\right]
\rightarrow 1\oplus \mathcal{U}\oplus \cdots \oplus \mathcal{U}^{\otimes %
\left[ p\right] }$ by%
\begin{equation*}
Y_{t}=1+\tsum\nolimits_{n=1}^{\left[ p\right] }y_{t}^{n}\text{ with }%
y_{t}^{1}:=\tint\nolimits_{0}^{t}\beta _{u}\left( g_{u}\right) dg_{u}\text{
and }y_{t}^{n}:=\tint\nolimits_{0}^{t}y_{u}^{n-1}\otimes dy_{u}^{1}\text{, }%
n=2,\dots ,\left[ p\right] \text{, }\forall t\in \left[ 0,T\right] \text{.}
\end{equation*}%
Then $Y\ $coincides with the rough integral in \cite{lyons1998differential}.
\end{corollary}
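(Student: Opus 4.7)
The strategy is to reduce the corollary to the main integration theorem by verifying that the one-form $\beta$ defined in (\ref{inner definition rough integral}) is a time-varying cocyclic one-form satisfying the slowly-varying condition (Condition \ref{Condition of integrable beta}), and then identifying the resulting dominated path with Lyons's rough integral via uniqueness of the multiplicative extension.

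First I would check that $\beta_s \in B(\mathcal{G}_{[p]},\mathcal{U})$ for each $s$ by invoking Proposition \ref{Proposition equivalent definition of cocyclic one-form}: set $\alpha_s(a) := \sum_{l=0}^{[p]-1}\frac{1}{l!}(D^l f)(x_s)\pi_{l+1}(g_s^{-1}a)$. Since $\mathcal{U}$ is abelian under addition, the cocyclic identity reduces to $\beta_s(a,b) = \alpha_s(ab) - \alpha_s(a)$, which follows from $g_s^{-1}a(b-1) = g_s^{-1}ab - g_s^{-1}a$ and linearity of $\pi_{l+1}$. Continuity $s\mapsto \beta_s$ in the operator norm is clear from continuity of $s\mapsto x_s = \pi_1(g_s)$ together with continuity of each $D^l f$ for $l\le[p]-1$; the hypothesis $\gamma>p-1$ gives $[p]-1\le\lfloor\gamma\rfloor$, so all derivatives appearing are well defined.

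Second I would check the slowly-varying condition with $\omega(s,t):=\|g\|_{p-var,[s,t]}^p$ and $\theta:=(\gamma+1)/p$, which exceeds $1$ precisely because $\gamma>p-1$. For $v\in\mathcal{V}^{\otimes k}$ with $1\le k\le[p]$, only the $l=k-1$ summand contributes to $\beta_s(g_s,v)$, yielding $\tfrac{1}{(k-1)!}(D^{k-1}f)(x_s)(v)$, which is uniformly bounded by $\|f\|_{\mathrm{Lip}(\gamma),R}$ with $R:=\sup_t\|x_t\|$. For the time difference, using $\pi_{l+1}(g_{s,t}v)=\pi_{l+1-k}(g_{s,t})\otimes v$ when $l+1\ge k$, a direct computation yields
\begin{equation*}
(\beta_t-\beta_s)(g_t,v) = \tfrac{1}{(k-1)!}\bigl[(D^{k-1}f)(x_t)-(D^{k-1}f)(x_s)\bigr]v - \sum_{j=1}^{[p]-k}\tfrac{1}{(j+k-1)!}(D^{j+k-1}f)(x_s)\bigl(\pi_j(g_{s,t})\otimes v\bigr).
\end{equation*}
Expanding the bracket by Taylor's theorem around $x_s$ to order $\lfloor\gamma\rfloor-k+1$ and invoking the symmetric-tensor identity (\ref{polynomial one-form symmetric tensor}) to trade $(x_t-x_s)^{\otimes j}$ for $j!$ times the symmetric part of $\pi_j(g_{s,t})$, the polynomial part of the Taylor expansion cancels the correction sum, leaving only the Taylor remainder and the high-order tail. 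Both are bounded by $C\|x_t-x_s\|^{\gamma-k+1}\|v\| \le C\,\omega(s,t)^{(\gamma+1-k)/p}\|v\| = C\,\omega(s,t)^{\theta-k/p}\|v\|$.

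Third, having established the slowly-varying condition, Theorem \ref{Theorem integrating slow varying cocyclic one forms} supplies $\int_0^\cdot\beta_u(g_u)dg_u\in C([0,T],\mathcal{U})$ together with the sharp estimate (\ref{error between integral and approximation}). By Definition \ref{Definition dominated path} this is a dominated path. For the identification, note that the approximation $\beta_s(g_s,g_{s,t})=\sum_{l=0}^{[p]-1}\tfrac{1}{l!}(D^l f)(x_s)\pi_{l+1}(g_{s,t})$ is (up to the same symmetrization computation as above) precisely the Lipschitz-$\gamma$ almost multiplicative functional used to define the rough integral in \cite{lyons1998differential}; the sharp estimate with $\theta>1$ is the almost-multiplicative bound. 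Uniqueness of the multiplicative extension of an almost multiplicative functional (\cite{lyons1998differential}, Thm 3.3.1) then forces the two integrals to coincide. For $Y$, iterating Proposition \ref{Proposition enhancement} produces $y^2,\ldots,y^{[p]}$, and by the same uniqueness each level agrees with the corresponding level of the Lyons signature of the rough integral.

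The main obstacle is the combinatorial cancellation in the second paragraph: verifying that the polynomial part of the Taylor expansion of $(D^{k-1}f)(x_t)$ around $x_s$ exactly matches the correction sum coming from $\pi_{l+1}(g_{s,t}v)$. This hinges on symmetrization of $\pi_j(g_{s,t})$ in its $j$ tensor slots (giving $\tfrac{1}{j!}(x_t-x_s)^{\otimes j}$) combined with the symmetry of $(D^{k-1+j}f)(x_s)$ in its first $k-1+j$ arguments. Once this matching is done bookkeepingly, the remaining steps are direct invocations of the integration theorem, Definition \ref{Definition dominated path}, and the uniqueness of almost multiplicative extensions.
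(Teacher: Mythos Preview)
Your strategy is exactly the paper's: verify the slowly-varying condition by symmetrization plus Taylor, then identify with Lyons via uniqueness of the multiplicative extension. The gap is the cancellation you assert in the second step. After replacing $\pi_j(g_{s,t})$ by its symmetric part $\tfrac{1}{j!}(x_t-x_s)^{\otimes j}$ (legitimate, since $(D^{j+k-1}f)(x_s)$ is symmetric in its first $j+k-1\ge j$ slots), the $j$-th correction term carries coefficient $\tfrac{1}{(j+k-1)!\,j!}$, whereas the $j$-th Taylor term of $\tfrac{1}{(k-1)!}(D^{k-1}f)(x_t)$ carries $\tfrac{1}{(k-1)!\,j!}$. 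These disagree for every $j\ge1$. Concretely, for $[p]=3$ and $k=1$ the residual is $\tfrac14(D^2f)(x_s)\bigl((x_t-x_s)^{\otimes2}\otimes v\bigr)$, which is only $O(\omega(s,t)^{2/p})$; since $2/p\le 1-1/p$ for $p\ge3$, no $\theta>1$ works and the slowly-varying condition fails.

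The discrepancy traces to the factor $\tfrac{1}{l!}$ in the stated definition~(\ref{inner definition rough integral}). Compare with (\ref{increment of y}) and Example~\ref{Example polynomial cocyclic one-form}, where the analogous local expansion is $\sum_l(D^lp)(x_s)\,x_{s,t}^{l+1}$ with \emph{no} such factor: the $1/l!$ has already been absorbed via (\ref{polynomial one-form symmetric tensor}) when one passes from $(x_t-x_s)^{\otimes l}/l!$ to the iterated integral $x_{s,t}^{l}$. The paper's own formulas (\ref{inner rough integral}) and (\ref{inner rough integral 2}) are only internally consistent if that factor is dropped. Without it, your correction term after symmetrization becomes $\tfrac{1}{j!}(D^{j+k-1}f)(x_s)\bigl((x_t-x_s)^{\otimes j}\otimes v\bigr)$, matches the Taylor term exactly, and yields precisely the remainder (\ref{inner rough integral 2}) with bound $C\|x_t-x_s\|^{\gamma-k+1}$; your argument and the paper's then coincide. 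For the identification of $Y$ the paper is more explicit than your final sentence: it unwinds the local expansion of $y_{s,t}^n$ through the iterated maps $\mathcal{I}^n$ and the ordered-shuffle identity (\ref{inner rough integral expression of I n-1}) to exhibit the same almost-multiplicative functional as in \cite{lyons1998differential}, Def.~3.2.2, and only then invokes uniqueness.
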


\begin{remark}
\label{Remark integrating time-varying Lipschitz functions}Let $\gamma >p-1$%
. Suppose $F:\left[ 0,T\right] \rightarrow C^{\gamma }\left( \mathcal{V}%
,L\left( \mathcal{V},\mathcal{U}\right) \right) $ is a time-varying
Lipschitz one-form, satisfying, for some control $\omega $ and $\theta >1$, 
\begin{equation}
\left\Vert \left( \left( D^{l}F_{t}\right) -\left( D^{l}F_{s}\right) \right)
\left( x_{t}\right) \right\Vert \leq \omega \left( s,t\right) ^{\theta -%
\frac{l+1}{p}}\text{, }\forall 0\leq s\leq t\leq T\text{, }l=0,1,\dots ,%
\left[ p\right] -1\text{.}
\label{condition for time-varying Lipschitz function}
\end{equation}%
If we modify the definition of $\beta _{s}$ in $\left( \ref{inner definition
rough integral}\right) $ by replacing $f$ with $F_{s}$, then it can be
proved similarly that $\int_{0}^{\cdot }\beta _{u}\left( g_{u}\right) dg_{u}$
is a dominated path.
\end{remark}

\begin{remark}
\label{Remark rough integral with inhomogeneous degree}Let $X$ be a $p$%
-rough path, $H$ be a $q$-rough path, $p^{-1}+q^{-1}>1$, $p\geq q$, and let $%
\left( x,h\right) \mapsto \alpha \left( x,h\right) $ be a Lipschitz function
that is $Lip\left( \gamma \right) $ in $x$ and $Lip\left( \kappa \right) $
in $h$. Then the rough integral $\int \alpha \left( X,H\right) dX$ is well
defined when $\gamma >p-1$ and $\left( \kappa \wedge 1\right)
q^{-1}+p^{-1}>1 $, because $t\mapsto \alpha \left( \cdot ,h_{t}\right) $ is
a time-varying Lipschitz function satisfying $\left( \ref{condition for
time-varying Lipschitz function}\right) $.
\end{remark}

\begin{remark}
\label{Remark integration of weakly geometric rough paths}The integration in
Corollary \ref{Example rough integral} is a generalization of Lyons'
original integration \cite{lyons1998differential} in the sense that $g$ can
be a weakly geometric rough path (see also \cite{cass2015integration}).
\end{remark}

\begin{remark}
\label{Remark polynomial one-form Butcher group}When $g$ takes values in
Butcher group, the one-form can be defined by, 
\begin{equation*}
\beta _{s}\left( a,b\right) :=\tsum\nolimits_{l=0}^{\left[ p\right] -1}\frac{%
1}{l!}\left( D^{l}f\right) \left( x_{s}\right) \sigma _{l+1}\left(
g_{s}^{-1}a\left( b-1\right) \right) \text{, }\forall a,b\in \mathcal{G}_{%
\left[ p\right] }\text{, \ }\forall s\in \left[ 0,T\right] \text{,}
\end{equation*}%
where $\sigma _{l+1}\in \mathcal{P}_{\left[ p\right] }$ denotes the
unlabelled tree obtained by attaching $l$ branches with one node to a new
root, e.g. $\sigma _{1}=\Bcdot$, $\sigma _{2}=[\Bcdot]$, $\sigma _{3}=[\Bcdot%
\Bcdot]$. Then $\int \beta \left( g\right) dg$ is a dominated path and
coincides with the integral in \cite{gubinelli2010ramification}.
\end{remark}

\begin{proof}
Since $D^{l}f\in C^{\gamma -l}\left( \mathcal{V},L\left( \mathcal{V}%
^{\otimes l},L\left( \mathcal{V},\mathcal{U}\right) \right) \right) $ is
symmetric in $\mathcal{V}^{\otimes l}$ and the projection of $\pi _{i}\left(
a\right) $, $a\in \mathcal{G}_{\left[ p\right] }$, to the space of symmetric
tensors is $\left( i!\right) ^{-1}\left( \pi _{1}\left( a\right) \right)
^{\otimes i}$ (see \cite{lyons1998differential}), it can be computed that,
for $a,b\in \mathcal{G}_{\left[ p\right] }$, 
\begin{equation}
\beta _{s}\left( a,b\right) =\sum_{l=0}^{\left[ p\right] -1}\left(
\sum_{j=0}^{\left[ p\right] -1-l}\left( D^{l+j}f\right) \left( x_{s}\right) 
\frac{\left( \pi _{1}\left( a\right) -x_{s}\right) ^{\otimes j}}{j!}\right)
\otimes \pi _{l+1}\left( b\right) \text{ (since }\pi _{0}\left( b\right) =1%
\text{)).}  \label{inner rough integral}
\end{equation}

Then we check that $\beta $ satisfies the slowly-varying condition. Since $%
\beta _{s}\left( a,\cdot \right) $ is a continuous linear mapping, based on
Condition \ref{Condition smallest Banach space}, the equality $\left( \ref%
{inner rough integral}\right) $ holds when $b$ is replaced by $v\in \mathcal{%
V}^{\otimes k}$, $k=1,\dots ,\left[ p\right] $. Then, for $0\leq s\leq t\leq
T$ and $k=1,2,\dots ,\left[ p\right] $, we have%
\begin{equation}
\left( \beta _{t}-\beta _{s}\right) \left( g_{t},v\right) =\left( \left(
D^{k-1}f\right) \left( x_{t}\right) -\sum_{j=0}^{\left[ p\right] -k}\left(
D^{k+j-1}f\right) \left( x_{s}\right) \frac{\left( x_{t}-x_{s}\right)
^{\otimes j}}{j!}\right) \otimes v\text{, }\forall v\in \mathcal{V}^{\otimes
k}\text{.}  \label{inner rough integral 2}
\end{equation}%
Since $f\in C^{\gamma }\left( \mathcal{V},L\left( \mathcal{V},\mathcal{U}%
\right) \right) $ for $\gamma >p-1$, by using $\left( \ref{inner rough
integral 2}\right) $ and Taylor's theorem, we have%
\begin{equation*}
\left\Vert \left( \beta _{t}-\beta _{s}\right) \left( g_{t},\cdot \right)
\right\Vert _{k}\leq C\left\Vert x_{t}-x_{s}\right\Vert ^{\gamma -k+1}\leq
C(\left\Vert g\right\Vert _{p-var,\left[ s,t\right] }^{p})^{\frac{\gamma +1}{%
p}-\frac{k}{p}}\text{.}
\end{equation*}%
Since $\{D^{k}f\}_{k=0}^{\left[ p\right] -1}$ are bounded on bounded sets,
we have $\sup_{s\in \left[ 0,T\right] }\left\Vert \beta _{s}\left(
g_{s},\cdot \right) \right\Vert <\infty $ and $\beta $ satisfies the
slowly-varying condition.

Then, by working on the local expansion of $Y$, we check that $Y$ coincides
with the rough integral in \cite{lyons1998differential}.

For $0\leq s\leq t\leq T$, we denote 
\begin{equation*}
Y_{s,t}:=Y_{s}^{-1}Y_{t}\text{ \ and \ }y_{s,t}^{n}:=\pi _{n}\left(
Y_{s,t}\right) \text{, }n=0,1,\dots ,\left[ p\right] \text{.}
\end{equation*}%
We define $H_{s}\in B(\mathcal{G}_{\left[ p\right] },\mathcal{U\oplus \cdots
\oplus U}^{\otimes \left[ p\right] })$ for $s\in \left[ 0,T\right] $ by%
\begin{equation*}
H_{s}=\tsum\nolimits_{n=1}^{\left[ p\right] }\eta _{s}^{n}\text{, }
\end{equation*}%
where $\eta _{s}^{n}\in B(\mathcal{G}_{\left[ p\right] },\mathcal{U}%
^{\otimes n})$ are defined by 
\begin{equation*}
\eta _{s}^{1}\left( a,b\right) :=\beta _{s}\left( a,b\right) \text{, }\eta
_{s}^{n}\left( a,b\right) :=\eta _{s}^{n-1}\left( g_{s},\cdot \right)
\otimes \eta _{s}^{1}\left( g_{s},\cdot \right) \mathcal{I}\left(
g_{s}^{-1}a\left( b-1\right) \right) \text{, }\forall a,b\in \mathcal{G}_{%
\left[ p\right] }\text{.}
\end{equation*}%
Hence $H_{s}=B_{s,s}$ for $B_{s,s}$ defined at $\left( \ref{inner definition
of Beta s,t}\right) $ and $\eta _{s}^{n}=\beta _{s,s}^{n}$ for $\beta
_{s,s}^{n}$ defined at $\left( \ref{inner definition of beta s,u tau bracket}%
\right) $. Then based on $\left( \ref{inner transitivity Eta is close to
Gamma}\right) $, there exist control $\omega $ and $\theta >1$, s.t.%
\begin{equation}
\left\Vert Y_{s,t}-1-H_{s}\left( g_{s},g_{s,t}\right) \right\Vert \leq
\omega \left( s,t\right) ^{\theta }\text{, }\forall 0\leq s\leq t\leq T\text{%
.}  \label{inner rough integral estimate of Ys,t in term of Beta}
\end{equation}%
To write $H_{s}\left( g_{s},g_{s,t}\right) $ in a more explicit form, we
define the mappings $\mathcal{I}^{n}\in L(T^{([p])}\left( \mathcal{V}\right)
,(T^{([p])}(\mathcal{V}))^{\otimes \left( n+1\right) })$, $n=1,\dots ,\left[
p\right] -1$, by (with $\mathcal{I}$ in $\left( \ref{definition of I for
nilpotent Lie group}\right) $ and $I_{d}$ denotes the identity function on $%
T^{([p])}(\mathcal{V})$):%
\begin{equation}
\mathcal{I}^{1}:=\mathcal{I}\text{, \ }\mathcal{I}^{n}=\left( \mathcal{I}%
^{n-1}\otimes I_{d}\right) \circ \mathcal{I}^{1}\text{, }n=2,\dots ,\left[ p%
\right] -1\text{.}  \label{inner definition I^n}
\end{equation}%
Then, it can be proved inductively that%
\begin{equation*}
\eta _{s}^{1}\left( g_{s},g_{s,t}\right) =\beta _{s}\left(
g_{s},g_{s,t}\right) \text{ and }\eta _{s}^{n}\left( g_{s},g_{s,t}\right)
=\beta _{s}\left( g_{s},\cdot \right) ^{\otimes n}\mathcal{I}^{n-1}\left(
g_{s,t}\right) \text{, }n=2,\dots ,\left[ p\right] \text{.}
\end{equation*}%
Combined with $\left( \ref{inner rough integral estimate of Ys,t in term of
Beta}\right) $, we have that, there exist control $\omega $ and $\theta >1$
such that for any $0\leq s\leq t\leq T$,%
\begin{equation}
\left\Vert y_{s,t}^{1}-\beta _{s}\left( g_{s},g_{s,t}\right) \right\Vert
\leq \omega \left( s,t\right) ^{\theta }\text{, }\left\Vert
y_{s,t}^{n}-\beta _{s}\left( g_{s},\cdot \right) ^{\otimes n}\mathcal{I}%
^{n-1}\left( g_{s,t}\right) \right\Vert \leq \omega \left( s,t\right)
^{\theta }\text{, }n=2,\dots ,\left[ p\right] \text{.}
\label{inner rough integral 3}
\end{equation}

Then we check that 
\begin{equation}
\mathcal{I}^{n}\left( a\right) =\tsum\nolimits_{j_{1}+\cdots +j_{n+1}\leq %
\left[ p\right] ,j_{i}\geq 1}\tsum\nolimits_{\rho \in OS\left( j_{1},\dots
,j_{n+1}\right) }\rho ^{-1}\left( \pi _{j_{1}+\cdots +j_{n+1}}\left(
a\right) \right) \text{, }n=1,\dots ,\left[ p\right] -1\text{, }\forall a\in 
\mathcal{G}_{\left[ p\right] }\text{,}
\label{inner rough integral expression of I n-1}
\end{equation}%
where $OS\left( j_{1},\dots ,j_{n+1}\right) $ denotes the ordered shuffles
(p73 \cite{lyons2007differential}). Indeed, we first suppose $a$ is an
element in the step-$\left[ p\right] $ nilpotent Lie group over a finite
dimensional subspace of $\mathcal{V}$. Then based on Chow-Rashevskii
Connectivity Theorem, there exists some continuous bounded variation path $x$%
, which takes values in the finite dimensional subspace of $\mathcal{V}$ and
satisfies $S_{\left[ p\right] }\left( x\right) _{0,1}=a$. By comparing the
expression on the r.h.s.\ of $\left( \ref{inner rough integral expression of
I n-1}\right) $ with the expression $\left( 4.9\right) $ on p74 in \cite%
{lyons2007differential}, we can rewrite $\left( \ref{inner rough integral
expression of I n-1}\right) $ in the form (with $1_{\left[ p\right] ,n+1}$
denotes the projection of $(T^{([p])}(\mathcal{V}))^{\otimes \left(
n+1\right) }$ to $\sum_{j_{1}+\cdots +j_{n+1}\leq \left[ p\right] ,j_{i}\geq
1}\mathcal{V}^{\otimes j_{1}}\otimes \cdots \otimes \mathcal{V}^{\otimes
j_{n+1}}$)%
\begin{equation}
\mathcal{I}^{n}\left( a\right) =1_{\left[ p\right] ,n+1}\left(
\tidotsint\nolimits_{0<u_{1}<\cdots <u_{n+1}<1}dS_{\left[ p\right] }\left(
x\right) _{0,u_{1}}\otimes \cdots \otimes dS_{\left[ p\right] }\left(
x\right) _{0,u_{n+1}}\right) \text{.}
\label{inner rough integral expression of I in finite dimensional subspace}
\end{equation}%
Hence, based on the definition of $\mathcal{I}^{n}$ in $\left( \ref{inner
definition I^n}\right) $, it can be proved inductively that $\left( \ref%
{inner rough integral expression of I in finite dimensional subspace}\right) 
$ (so $\left( \ref{inner rough integral expression of I n-1}\right) $) holds
for all the elements in the step-$\left[ p\right] $ nilpotent Lie group over
any finite dimensional subspace of $\mathcal{V}$. Then by using continuity,
we have $\left( \ref{inner rough integral expression of I n-1}\right) $
holds for any $a\in \mathcal{G}_{\left[ p\right] }$.

Then we check that $Y$ coincides with the rough integral in \cite%
{lyons1998differential}. Indeed, based on $\left( \ref{inner rough integral
3}\right) $, if we define $X:\left\{ \left( s,t\right) |0\leq s\leq t\leq
T\right\} \rightarrow 1\oplus \mathcal{U}\oplus \cdots \oplus \mathcal{U}%
^{\otimes \left[ p\right] }$ by%
\begin{equation*}
X_{s,t}:=1+\beta _{s}\left( g_{s},g_{s,t}\right) +\tsum\nolimits_{n=2}^{ 
\left[ p\right] }\beta _{s}\left( g_{s},\cdot \right) ^{\otimes n}\mathcal{I}%
^{n-1}\left( g_{s,t}\right) \text{, }\forall 0\leq s\leq t\leq T\text{,}
\end{equation*}%
then $X$ is a almost multiplicative functional (Def 3.1.1 \cite%
{lyons1998differential}), and $Y$ is a $p$-rough path associated with $X$
(i.e. $Y$ is multiplicative and there exist control $\omega $ and $\theta >1$
such that $Y_{s,t}$ and $X_{s,t}$ are close up to an error bounded by $%
\omega \left( s,t\right) ^{\theta }$ for any $s<t$). On the other hand,
based on Def 3.2.2 and Theorem 3.2.1 \cite{lyons1998differential}, the rough
integral is another $p$-rough path associated with $X$. Since the $p$-rough
path associated with $X$ is unique (Theorem 3.3.1 \cite%
{lyons1998differential}), we have that $Y$ coincides with the rough integral
in \cite{lyons1998differential}.
\end{proof}

\section{Iterated integration for weakly controlled paths}

Recall in Section \ref{Section dominated path} that Condition \ref{Condition
smallest Banach space} states that $T^{(\left[ p\right] )}(\mathcal{V)}:%
\mathcal{=%
\mathbb{R}
\oplus V\oplus \cdots \oplus V}^{\otimes \left[ p\right] }$ is the closure
of the linear span of the topological group $\mathcal{G}_{\left[ p\right] }$%
; Condition \ref{Condition g satisfies differential equation} requires the
existence of a mapping $\mathcal{I}$ which expresses the formal integral $%
\iint_{0<u_{1}<u_{2}<T}\delta g_{0,u_{1}}\otimes \delta g_{0,u_{2}}$ for $%
g\in C\left( \left[ 0,T\right] ,\mathcal{G}_{\left[ p\right] }\right) $ as a
universal continuous linear mapping of $g_{0,T}$. In particular, Conditions %
\ref{Condition smallest Banach space} and \ref{Condition g satisfies
differential equation} are satisfied when $(T^{(\left[ p\right] )}(\mathcal{%
V)},\mathcal{G}_{\left[ p\right] },\mathcal{P}_{\left[ p\right] })$ is the
triple for the step-$\left[ p\right] $ free nilpotent Lie group $p\geq 1$ or
the step-$2$ Butcher group $2\leq p<3$.

\begin{corollary}[Iterated Integration for Weakly Controlled Paths]
\label{Example weakly controlled path}Suppose $(T^{(\left[ p\right] )}(%
\mathcal{V)},\mathcal{G}_{\left[ p\right] },\mathcal{P}_{\left[ p\right] })$
satisfies Conditions \ref{Condition smallest Banach space} and \ref%
{Condition g satisfies differential equation}, and $g\in C^{p-var}\left( %
\left[ 0,T\right] ,\mathcal{G}_{\left[ p\right] }\right) $ for some $p\geq 2$%
. Suppose $\mathcal{U}^{i}$, $i=1,2$, are two Banach spaces, and there
exist\ control $\omega $ and $\theta >1$ such that $\gamma ^{i}\in C\left( %
\left[ 0,T\right] ,\mathcal{U}^{i}\right) $ and $\beta ^{i}\in C\left( \left[
0,T\right] ,B\left( \mathcal{G}_{\left[ p\right] -1},\mathcal{U}^{i}\right)
\right) $, $i=1,2$, satisfy that%
\begin{gather}
M^{i}:=\sup_{0\leq t\leq T}\left\Vert \beta _{t}^{i}\left( g_{t},\cdot
\right) \right\Vert +\sup_{0\leq s<t\leq T}\max_{k=1,\dots ,\left[ p\right]
-1}\frac{\left\Vert \left( \beta _{t}^{i}-\beta _{s}^{i}\right) \left(
g_{t},\cdot \right) \right\Vert _{k}}{\omega \left( s,t\right) ^{\theta -%
\frac{k+1}{p}}}  \label{condition weakly controlled path} \\
+\sup_{0\leq s<t\leq T}\frac{\left\Vert \gamma _{t}^{i}-\gamma
_{s}^{i}-\beta _{s}^{i}\left( g_{s},g_{s,t}\right) \right\Vert }{\omega
\left( s,t\right) ^{\theta -\frac{1}{p}}}<\infty \text{, }i=1,2\text{.} 
\notag
\end{gather}%
Let us define $h\in C\left( \left[ 0,T\right] ,\mathcal{U}^{2}\oplus 
\mathcal{G}_{\left[ p\right] }\right) $ by $h:=\gamma ^{2}\oplus g$ with $%
h_{s,t}=\left( \gamma _{t}^{2}-\gamma _{s}^{2}\right) \oplus g_{s,t}$, $%
\forall 0\leq s\leq t\leq T$. Then there exists $\beta \in C\left( \left[ 0,T%
\right] ,B\left( \mathcal{U}^{2}\oplus \mathcal{G}_{\left[ p\right] },%
\mathcal{U}^{1}\otimes \mathcal{U}^{2}\right) \right) $ such that $\left(
\beta ,h\right) $ satisfies the integrable condition (Condition \ref%
{Condition integrable condition}), and with control $\hat{\omega}:=\omega
+\left\Vert g\right\Vert _{p-var}^{p}$ and $\hat{\theta}:=\min (\theta ,%
\frac{\left[ p\right] +1}{p})>1$, we have that ($\mathcal{I}$ in Condition %
\ref{Condition g satisfies differential equation})%
\begin{eqnarray*}
&&\left\Vert \int_{s}^{t}\beta _{u}\left( h_{u}\right) dh_{u}-\left( \gamma
_{s}^{1}-\gamma _{0}^{1}\right) \otimes \left( \gamma _{t}^{2}-\gamma
_{s}^{2}\right) -\beta _{s}^{1}\left( g_{s},\cdot \right) \otimes \beta
_{s}^{2}\left( g_{s},\cdot \right) \mathcal{I}\left( g_{s,t}\right)
\right\Vert \\
&\leq &C_{p,\hat{\omega}\left( 0,T\right) }M^{1}M^{2}\hat{\omega}\left(
s,t\right) ^{\hat{\theta}}\text{, }\forall 0\leq s<t\leq T\text{.}
\end{eqnarray*}
\end{corollary}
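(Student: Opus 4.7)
The plan is to construct a time-varying cocyclic one-form $\beta$ on the product group $\mathcal{U}^{2}\oplus \mathcal{G}_{[p]}$ whose diagonal value $\beta_{s}(h_{s},h_{s,t})$ is exactly the right-hand side of the claimed estimate, and then to invoke Theorem \ref{Theorem integrating slow varying cocyclic one forms} once Condition \ref{Condition integrable condition} has been checked. The construction closely parallels Proposition \ref{Proposition enhancement}, with two essential substitutions: the dominated path $\int_{0}^{\cdot }\beta^{2}(g)dg$ is replaced by the $\mathcal{U}^{2}$-coordinate $\gamma^{2}$ of $h$, and the dominated path $\int_{0}^{\cdot }\beta^{1}(g)dg$ is replaced by the weakly controlled $\gamma^{1}$.

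Using Proposition \ref{Proposition equivalent definition of cocyclic one-form}, I would define $\beta$ via an additive potential $\alpha_{s}$. For $(u,a)\in \mathcal{U}^{2}\oplus T^{([p])}(\mathcal{V})$, set
\begin{equation*}
\alpha_{s}(u,a):=(\gamma_{s}^{1}-\gamma_{0}^{1})\otimes u + \bigl(\beta_{s}^{1}(g_{s},\cdot )\otimes \beta_{s}^{2}(g_{s},\cdot )\bigr)\,\mathcal{I}(g_{s}^{-1}a),
\end{equation*}
and $\beta_{s}(x,y):=\alpha_{s}(xy)-\alpha_{s}(x)$; this is automatically an additive cocycle on the topological group $\mathcal{U}^{1}\otimes \mathcal{U}^{2}$. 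Using $\mathcal{I}(1)=0$ and the product structure $h_{t}=(\gamma^{2}_{t},g_{t})$, a direct calculation gives
\begin{equation*}
\beta_{s}(h_{s},h_{s,t})=(\gamma_{s}^{1}-\gamma_{0}^{1})\otimes (\gamma_{t}^{2}-\gamma_{s}^{2}) + \bigl(\beta_{s}^{1}(g_{s},\cdot )\otimes \beta_{s}^{2}(g_{s},\cdot )\bigr)\,\mathcal{I}(g_{s,t}).
\end{equation*}

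Next I would verify Condition \ref{Condition integrable condition} for the pair $(\beta ,h)$. Uniform boundedness of $\beta_{s}(h_{s},h_{s,t})$ follows from the hypothesis $(\ref{condition weakly controlled path})$, the $p$-variation of $g$, and the inclusion $\mathcal{I}(\mathcal{V}^{\otimes k})\subseteq \sum_{j_{1}+j_{2}=k,\,j_{i}\geq 1}\mathcal{V}^{\otimes j_{1}}\otimes \mathcal{V}^{\otimes j_{2}}$ in Condition \ref{Condition g satisfies differential equation}. For the compensated regularity of $(\beta_{u}-\beta_{s})(h_{u},h_{u,t})$ with $s<u<t$, I would decompose it in the spirit of $(\ref{inner linear mapping iterated integral})$ in the proof of Proposition \ref{Proposition enhancement}: one piece is a cross product of the form $(\gamma_{u}^{1}-\gamma_{s}^{1}-\beta_{s}^{1}(g_{s},g_{s,u}))\otimes (\gamma_{t}^{2}-\gamma_{u}^{2})$ and its symmetric counterpart, controlled directly by the weakly controlled remainder bound in $(\ref{condition weakly controlled path})$; the remaining $\mathcal{I}$-piece inherits its regularity from the increments $(\beta_{u}^{i}-\beta_{s}^{i})(g_{u},\cdot )$ exactly as in $(\ref{inner bound on beta1,2})$, but using the weaker exponent $\theta -(k+1)/p$ allowed by $(\ref{condition weakly controlled path})$. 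Combining these with $\|\sigma (g_{s,t})\|\leq \|g\|_{p\text{-var},[s,t]}^{|\sigma |}$ produces the desired $\hat{\omega}(s,t)^{\hat{\theta}}$ bound.

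With Condition \ref{Condition integrable condition} in hand, Theorem \ref{Theorem integrating slow varying cocyclic one forms} delivers $\int \beta (h)dh\in C([0,T],\mathcal{U}^{1}\otimes \mathcal{U}^{2})$ and the local error estimate $\|\int_{s}^{t}\beta_{u}(h_{u})dh_{u}-\beta_{s}(h_{s},h_{s,t})\|\leq C\hat{\omega}(s,t)^{\hat{\theta}}$, which, combined with the explicit formula for $\beta_{s}(h_{s},h_{s,t})$ above, is the claimed inequality. The main obstacle I anticipate is the bookkeeping in the compensated regularity step: because $\gamma^{1},\gamma^{2}$ are merely weakly controlled rather than dominated, the one-forms $\beta^{i}$ fail the full slowly-varying assumption used in Proposition \ref{Proposition enhancement}, so each cross term in the decomposition of $(\beta_{u}-\beta_{s})(h_{u},h_{u,t})$ must be absorbed either via $\|(\beta_{u}^{i}-\beta_{s}^{i})(g_{u},\cdot )\|_{k}$ or via the weakly controlled remainder $\gamma^{i}-\beta^{i}(g,g_{\cdot ,\cdot })$, and the degree count has to be tight enough that every piece still exceeds the Young threshold $1$.
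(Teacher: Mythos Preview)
Your proposal is correct and follows essentially the same route as the paper. Your potential $\alpha_{s}$ yields exactly the one-form the paper writes down directly, namely $\beta_{s}(u\oplus a,v\oplus b)=(\gamma_{s}^{1}-\gamma_{0}^{1})\otimes v+\beta_{s}^{1,2}(g_{s},g_{s}^{-1}a(b-1))$, and the verification of Condition \ref{Condition integrable condition} proceeds just as you outline: the paper expands $(\beta_{u}-\beta_{s})(h_{u},h_{u,t})$ using the almost-cocycle identity $(\ref{beta12 group-valued cocyclic one-form})$ for $\beta_{s}^{1,2}$, obtaining the two weakly-controlled-remainder cross terms you name, a third mixed term $\beta_{s}^{1}(g_{s},g_{s,u})\otimes(\beta_{u}^{2}-\beta_{s}^{2})(g_{u},g_{u,t})$, the $(\beta_{u}^{1,2}-\beta_{s}^{1,2})$ piece bounded via $(\ref{inner bound on beta1,2})$, and two truncation sums; each is then absorbed exactly as you anticipate, and Theorem \ref{Theorem integrating slow varying cocyclic one forms} closes the argument.
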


\begin{remark}
The integral path $\int_{0}^{\cdot }\beta \left( h\right) dh$ is continuous
in $p$-variation w.r.t. $M^{i}$ that is a norm involving both $\gamma ^{i}$
and $\beta ^{i}$. It is hard to derive the continuity in operator norm of
the one-form $\beta $ in terms of $\gamma ^{i}$ and $\beta ^{i}$ (comparing
with $\left( \ref{continuity of one-forms iterated integral}\right) $),
because the constraint $\left\Vert \gamma _{t}^{2}-\gamma _{s}^{2}-\beta
_{s}^{2}\left( g_{s},g_{s,t}\right) \right\Vert \leq M^{2}\omega \left(
s,t\right) ^{\theta -\frac{1}{p}}$ prescribes directions to evaluate $\beta $%
. The one-form $\beta $ would not vary slowly as a linear operator (that
induces strong continuity as demonstrated in Section \ref{Section stableness
of dominated paths}); it only varies slowly in (a neighborhood of) the
future direction of the path.
\end{remark}

\begin{proof}
With the mapping $\mathcal{I}\in L(T^{\left( \left[ p\right] \right) }\left( 
\mathcal{V}\right) ,T^{\left( \left[ p\right] \right) }\left( \mathcal{V}%
\right) ^{\otimes 2})$ in Condition \ref{Condition g satisfies differential
equation}, for $s\in \left[ 0,T\right] $, we define $\beta _{s}^{1,2}\in
C\left( \mathcal{G}_{\left[ p\right] },L\left( T^{\left( \left[ p\right]
\right) }\left( \mathcal{V}\right) ,\mathcal{U}^{1}\otimes \mathcal{U}%
^{2}\right) \right) $ by%
\begin{equation*}
\beta _{s}^{1,2}\left( a,v\right) :=\beta _{s}^{1}\left( a,\cdot \right)
\otimes \beta _{s}^{2}\left( a,\cdot \right) \mathcal{I}\left( v\right) 
\text{, }\forall a\in \mathcal{G}_{\left[ p\right] }\text{, }\forall v\in
T^{\left( \left[ p\right] \right) }\left( \mathcal{V}\right) \text{,}
\end{equation*}%
and define $\beta \in C\left( \left[ 0,T\right] ,B\left( \mathcal{U}%
^{2}\oplus \mathcal{G}_{\left[ p\right] },\mathcal{U}^{1}\otimes \mathcal{U}%
^{2}\right) \right) $ by%
\begin{equation*}
\beta _{s}\left( u\oplus a,v\oplus b\right) :=\left( \gamma _{s}^{1}-\gamma
_{0}^{1}\right) \otimes v+\beta _{s}^{1,2}\left( g_{s},g_{s}^{-1}a\left(
b-1\right) \right) \text{, }\forall \left( u\oplus a\right) ,\left( v\oplus
b\right) \in \mathcal{U}^{2}\oplus \mathcal{G}_{\left[ p\right] }\text{, }%
\forall s\in \left[ 0,T\right] \text{.}
\end{equation*}%
Then we check that $\left( \beta ,h\right) $ satisfies the integrable
condition. For $s<u<t$, similar to the argument used in the proof of the
iterated integration for dominated paths (see $\left( \ref{inner bound on
beta1,2}\right) $ on page \pageref{inner bound on beta1,2}), we have%
\begin{eqnarray*}
\left\Vert \left( \beta _{u}^{1,2}-\beta _{s}^{1,2}\right) \left(
g_{u},\cdot \right) \right\Vert _{k} &\leq &C_{p,\omega \left( 0,T\right)
}M^{1}M^{2}\omega \left( s,u\right) ^{\theta -\frac{1}{p}-\frac{k}{p}+\frac{1%
}{p}} \\
&=&C_{p,\omega \left( 0,T\right) }M^{1}M^{2}\omega \left( s,u\right)
^{\theta -\frac{k}{p}}\text{, }\forall s<u\text{, }k=1,\dots \left[ p\right] 
\text{.}
\end{eqnarray*}%
Then, 
\begin{equation}
\left\Vert \left( \beta _{u}^{1,2}-\beta _{s}^{1,2}\right) \left(
g_{u},g_{u,t}\right) \right\Vert \leq \tsum\limits_{\sigma \in \mathcal{P}_{%
\left[ p\right] }}\left\Vert \left( \beta _{u}^{1,2}-\beta _{s}^{1,2}\right)
\left( g_{u},\cdot \right) \right\Vert _{\left\vert \sigma \right\vert
}\left\Vert \sigma \left( g_{u,t}\right) \right\Vert \leq C_{p,\omega \left(
0,T\right) }M^{1}M^{2}\hat{\omega}\left( s,t\right) ^{\theta }\text{.}
\label{inner weakly controlled 1}
\end{equation}%
Based on $\left( \ref{beta12 group-valued cocyclic one-form}\right) $, for $%
s<u<t$,%
\begin{eqnarray*}
\beta _{s}^{1,2}\left( g_{s},g_{s,t}\right) &=&\beta _{s}^{1,2}\left(
g_{s},g_{s,u}\right) +\beta _{s}^{1,2}\left( g_{u},g_{u,t}\right) +\beta
_{s}^{1}\left( g_{s},g_{s,u}\right) \otimes \beta _{s}^{2}\left(
g_{u},g_{u,t}\right) \\
&&-\tsum\nolimits_{\sigma _{i}\in \mathcal{P}_{\left[ p\right] },\left\vert
\sigma _{1}\right\vert +\left\vert \sigma _{2}\right\vert \geq \left[ p%
\right] +1}\beta _{s}^{1}\left( g_{s},\sigma _{1}\left( g_{s,u}-1\right)
\right) \otimes \beta _{s}^{2}\left( g_{s},\sigma _{2}\left( g_{s,u}\left(
g_{u,t}-1\right) \right) \right) \\
&&-\tsum\nolimits_{k=2}^{\left[ p\right] }\tsum\nolimits_{\sigma _{i}\in 
\mathcal{P}_{\left[ p\right] },\left\vert \sigma _{1}\right\vert +\left\vert
\sigma _{2}\right\vert \geq \left[ p\right] +1-k}\beta _{s}^{1}\left(
g_{s},\sigma _{1}\left( g_{s,u}\right) \cdot \right) \otimes \beta
_{s}^{2}\left( g_{s},\sigma _{2}\left( g_{s,u}\right) \cdot \right) \mathcal{%
I}\left( \pi _{k}\left( g_{u,t}\right) \right) \text{.}
\end{eqnarray*}%
Then%
\begin{eqnarray*}
&&\left( \beta _{u}-\beta _{s}\right) \left( h_{u},h_{u,t}\right) \\
&=&\left( \gamma _{u}^{1}-\gamma _{s}^{1}-\beta _{s}^{1}\left(
g_{s},g_{s,u}\right) \right) \otimes \left( \gamma _{t}^{2}-\gamma
_{u}^{2}\right) +\beta _{s}^{1}\left( g_{s},g_{s,u}\right) \otimes \left(
\gamma _{t}^{2}-\gamma _{u}^{2}-\beta _{u}^{2}\left( g_{u},g_{u,t}\right)
\right) \\
&&+\beta _{s}^{1}\left( g_{s},g_{s,u}\right) \otimes \left( \beta
_{u}^{2}-\beta _{s}^{2}\right) \left( g_{u},g_{u,t}\right) +\left( \beta
_{u}^{1,2}-\beta _{s}^{1,2}\right) \left( g_{u},g_{u,t}\right) \\
&&+\tsum\nolimits_{\sigma _{i}\in \mathcal{P}_{\left[ p\right] },\left\vert
\sigma _{1}\right\vert +\left\vert \sigma _{2}\right\vert \geq \left[ p%
\right] +1}\beta _{s}^{1}\left( g_{s},\sigma _{1}\left( g_{s,u}\right)
\right) \otimes \beta _{s}^{2}\left( g_{s},\sigma _{2}\left( g_{s,u}\left(
g_{u,t}-1\right) \right) \right) \\
&&+\tsum\nolimits_{k=2}^{\left[ p\right] }\tsum\nolimits_{\sigma _{i}\in 
\mathcal{P}_{\left[ p\right] },\left\vert \sigma _{1}\right\vert +\left\vert
\sigma _{2}\right\vert \geq \left[ p\right] +1-k}\beta _{s}^{1}\left(
g_{s},\sigma _{1}\left( g_{s,u}\right) \cdot \right) \otimes \beta
_{s}^{2}\left( g_{s},\sigma _{2}\left( g_{s,u}\right) \cdot \right) \mathcal{%
I}\left( \pi _{k}\left( g_{u,t}\right) \right) \text{.}
\end{eqnarray*}%
Then combined with $\left( \ref{condition weakly controlled path}\right) $, $%
\left( \ref{inner weakly controlled 1}\right) $ and that $\left\Vert \sigma
\left( g_{s,t}\right) \right\Vert \leq \left\Vert g\right\Vert _{p-var,\left[
s,t\right] }^{\left\vert \sigma \right\vert }$, $\left( \beta ,h\right) $
satisfies the integrable condition (Condition \ref{Condition integrable
condition}), and the local estimate follows from Theorem \ref{Theorem
integrating slow varying cocyclic one forms}.
\end{proof}

\bigskip

\noindent \textbf{Acknowledgement} \ The authors would like to thank Horatio
Boedihardjo, Ilya Chevyrev and Vlad Margarint for valuable suggestions on
the paper.

\bibliographystyle{abbrv}
\bibliography{acompat,roughpath}

\end{document}